\documentclass[11pt]{amsart}
\usepackage[top=2cm, bottom=2cm, left=2.7cm, right=2.7cm]{geometry}
\usepackage{setspace}
\usepackage{amsmath,amssymb,amsthm}
\usepackage{mathrsfs}  
\usepackage{bbm}        
\usepackage{esint}     
\usepackage{dsfont}
\usepackage[dvipsnames]{xcolor}
\usepackage{cancel}
\usepackage{todonotes}

\usepackage[utf8]{inputenc}
\usepackage[T1]{fontenc}
\usepackage[english]{babel}
\usepackage[noadjust]{cite}

\usepackage{calc}
\usepackage[inline]{enumitem}
\usepackage[normalem]{ulem}
\usepackage{url}

\usepackage[bookmarks, bookmarksnumbered, pdfstartview={XYZ null null 1.00}]{hyperref}

\newcommand{\theoname}{Theorem}
\newcommand{\lemmname}{Lemma}
\newcommand{\coroname}{Corollary}
\newcommand{\propname}{Proposition}
\newcommand{\definame}{Definition}

\newcommand{\remkname}{Remark}
\newcommand{\explname}{Example}

\theoremstyle{plain}
\newtheorem{theorem}{\theoname}[section]
\newtheorem{lemma}[theorem]{\lemmname}

\newtheorem{proposition}[theorem]{\propname}

\theoremstyle{definition}
\newtheorem{definition}[theorem]{\definame}
\newtheorem{remark}[theorem]{\remkname}

\allowdisplaybreaks

\def\Xint#1{\mathchoice{\XXint\displaystyle\textstyle{#1}}%
  {\XXint\textstyle\scriptstyle{#1}}%
  {\XXint\scriptstyle\scriptscriptstyle{#1}}%
  {\XXint\scriptscriptstyle\scriptscriptstyle{#1}}%
  \!\int}
\def\XXint#1#2#3{{\setbox0=\hbox{$#1{#2#3}{\int}$}
  \vcenter{\hbox{$#2#3$}}\kern-.5\wd0}}
\def\fint{\Xint-}

\newcommand{\dHn}{\,\dd\mathcal{H}^{n-1}}
\newcommand{\dsty}{\displaystyle}
\newcommand{\eqdef}{\overset{\text{def.}}{=}}

\def\dd{{\rm d}}
\newcommand{\norm}[1]{\left\lVert#1\right\rVert}

\def\ddiv{{\mathop{\rm div}}}

\newcommand{\mres}{\mathbin{\vrule height 1.6ex depth 0pt width
		0.13ex\vrule height 0.13ex depth 0pt width 1.3ex}}

\newcommand\TV{\mathop{\rm TV}}
\newcommand\BV{\mathop{\rm BV}}
\newcommand\Per{\mathop{\rm Per}}

\newcommand\supp{\mathop{\rm supp}}
\newcommand\diam{\mathop{\rm diam}}

\newcommand\SV{\mathop{\rm SV}}

\title[Effective Quantitative stability of Faber-Krahn]{On the effective sharp
stability of the Faber-Krahn inequality}

\author{Andr\'e Guerra}
\email{adblg2@cam.ac.uk}
\address{Department of Pure Mathematics and Mathematical Statistics (DPMMS) at the University of Cambridge - Centre for Mathematical Sciences, Wilberforce Road, Cambridge, CB3 0WB, United Kingdom}

\author{Jo\~ao Miguel Machado}
\email{joao-miguel.machado@ceremade.dauphine.fr}
\address{Lagrange Mathematical and Computational Center\\
103 rue de Grenelle\\
Paris, 75007, France}

\author{Jo\~ao P. G. Ramos}
\email{joao.ramo@impa.br}
\address{Instituto Nacional de Matem\'atica Pura e Aplicada (IMPA) \\ 
Estrada Dona Castorina 110 \\
22460-320, Jardim Bot\^anico - Rio de Janeiro, RJ, Brasil}

\date{\today}

\begin{document}
\raggedbottom

\begin{abstract}
We prove the sharp quantitative Faber--Krahn inequality with a computable dimensional constant, resolving a problem posed by Brasco and De Philippis~\cite[Open Problem~2]{brasco2017spectral}.

In contrast with the known proof of the sharp estimate, our argument avoids indirect compactness arguments and selection principles. It runs through a new level-set proof of the Saint--Venant inequality, using that the torsional deficit controls both the average isoperimetric deficit and the oscillation of the gradient magnitude along the boundaries of level sets. An effective local strong isoperimetric estimate for confined sets then yields the result.

 \bigskip

\noindent\textbf{Keywords.} 
Eigenvalue stability, Torsional rigidity, strong isoperimetry

\noindent\textbf{2020 Mathematics Subject Classification.}
49R05, 49Q20, 35J20, 49J40, 26D20
\\ 
\end{abstract}

\maketitle

\section{Introduction}
Let $\Omega\subset\mathbb R^n$, $n\ge2$, be an open set of finite positive
measure. Its first Dirichlet eigenvalue is
\[
  \lambda_1(\Omega)
  \eqdef
  \inf_{
    \substack{
        v\in W_0^{1,2}(\Omega)
    } \, 
  }
  \left\{
  \displaystyle
  \int_\Omega |\nabla v|^2\dd x: 
  \quad 
  \norm{v}_{L^2(\Omega)} = 1
  \right\}.
\]
The Faber--Krahn inequality states that
\[
  \lambda_1(\Omega)\ge \lambda_1(B_\Omega),
\]
where $B_\Omega$ is any ball with $|B_\Omega|=|\Omega|$. Equality holds only
when $\Omega$ agrees almost everywhere with a ball.
 This inequality, a cornerstone result in modern geometric analysis and calculus of variations, was originally conjectured by Rayleigh and proved
independently by Faber and Krahn.

In spite of the fact that the Faber--Krahn inequality is, by now, a standard result in calculus of variations, with its proof via the P\'olya-Szeg\H{o} being, by now, as classical as the result itself, the question of its sharp stability has a much more recent history.

In order to state it, we first recall that the natural scaling of the Dirichlet eigenvalue is given by 
\[
    \lambda_1(t \Omega) = t^{-2}\lambda_1(\Omega).
\]
Hence, to measure oscillations of the Faber-Krahn inequality, it is useful to write it in the following equivalent \textit{scale invariant form} 
\[
    |\Omega|^{2/n} \lambda_1(\Omega)
    \ge 
    |B|^{2/n} \lambda_1(B),
\]
which is valid for any euclidean ball $B$. On the other hand, the distance of $\Omega$ to the family of balls can be quantified up to changes of scale by its \emph{Fraenkel asymmetry}
\[
  \mathcal A(\Omega)
  \eqdef
  \inf_{z\in\mathbb R^n}
  \frac{|\Omega\mathbin\Delta B_\Omega(z)|}{|\Omega|}.
\]

The question of making the Faber--Krahn inequality quantitative is then equivalent to asking how small $\mathcal{A}(\Omega)$ has to be, especially when compared to the natural deficit
\begin{equation*}
  \delta_{\rm FK} (\Omega) \eqdef 
  |\Omega|^{2/n} \lambda_1(\Omega) - |B|^{2/n} \lambda_1(B).
\end{equation*}
This question was first addressed for arbitrary open sets in every dimension by Fusco--Maggi--Pratelli~\cite{FuscoMaggiPratelli2009Eigenvalue}, who proved that
\[
\mathcal{A}(\Omega)^4 \leq C(n) \delta_{\rm FK}(\Omega),
\]
where $C(n) > 0$ is a computable dimensional constant. Brasco--De Philippis~\cite[Theorem~2.10]{brasco2017spectral} later improved the exponent from $4$ to $3$. Finally, Brasco--De Philippis--Velichkov~\cite{brasco2015faber} proved that
\begin{equation}\label{eq.intro_quant_FK}
\mathcal{A}(\Omega)^2 \le C \, \delta_{\rm FK}(\Omega),
\end{equation}
for some finite constant $C>0$. The exponent $2$ is optimal, as is seen from smooth perturbations of the ball. In spite of the striking nature of \eqref{eq.intro_quant_FK}, the constant obtained in \cite{brasco2015faber} is \emph{non-explicit}, as their proof uses a selection and compactness argument, and derives the sharp version thereof by contradiction. We also refer the reader to the works of Allen--Kriventsov--Neumayer~\cite{Allen2021,AllenKriventsovNeumayer2023,AllenKriventsovNeumayer2025} and Fleschler--Tolsa--Villa~\cite{FleschlerTolsaVilla2024}. For the first eigenvalue of the $p$-Laplacian, Bhattacharya~\cite{Bhattacharya2001} obtained earlier asymmetry estimates, and Fusco--Zhang~\cite{FuscoZhang2017} proved the sharp quantitative inequality for every $p>1$, extending~\cite{brasco2015faber}. Ideas from the latter work have also informed quantitative Faber--Krahn-type results for the Robin, fractional, and Gaussian Laplacians~\cite{BucurFeroneNitschTrombetti2018,BrascoCintiVita2020,CarbottiCitoLaMannaPallara2024}, the fundamental tones of free plates~\cite{BuosoChasmanProvenzano2018}, and the short-time Fourier transform~\cite{GomezGuerraRamosTilli2024}. Closely related developments include the sharp quantitative $p$-isocapacitary inequality~\cite{Mukoseeva2023}, quantitative estimates for $p$-torsion on convex sets~\cite{AmatoMasielloPaoliSannipoli2023}, and stability estimates for the Dirichlet spectrum near the ball~\cite{BucurLamboleyNahonPrunier2026}.

The purpose of this article is to prove~\eqref{eq.intro_quant_FK} with a
computable dimensional constant. We use the term \emph{effective sharp
quantitative Faber--Krahn inequality} for this conclusion. Here effective
means that the constant can be traced and computed explicitly through the proof; we, however, do not try to optimize
its numerical value.

More precisely, a dimensional constant is called computable if an algorithm produces certified rational upper and lower bounds
converging to it. In particular, every smallness threshold below is obtained from previously fixed computable constants by arithmetic operations or finite searches; no compactness argument is used to select it.

\begin{theorem}\label{thm.sharp_FK}
For every $n\ge2$ there is a computable constant $C_{\rm FK}(n)>0$ such
that every open set $\Omega\subset\mathbb R^n$ satisfies
\[
  \mathcal A(\Omega)^2
  \le C_{\rm FK}(n)\delta_{\rm FK} (\Omega).
\]
\end{theorem}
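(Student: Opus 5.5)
We reduce Theorem~\ref{thm.sharp_FK} to an effective sharp quantitative Saint--Venant inequality, following the strategy announced in the abstract. Write $T(\Omega)=\int_\Omega u_\Omega\,\dd x$ for the torsional rigidity, where $-\Delta u_\Omega=1$ in $\Omega$ and $u_\Omega\in W^{1,2}_0(\Omega)$. The scale-invariant deficit is $\delta_{\rm SV}(\Omega)=1-T(\Omega)|\Omega|^{-1-2/n}/(T(B)|B|^{-1-2/n})$. The first step is the standard Kohler-Jobin / Brasco--De Philippis comparison $\delta_{\rm SV}(\Omega)\le C(n)\,\delta_{\rm FK}(\Omega)$, with explicit constants. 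It is obtained by testing the torsion functional with the first eigenfunction, or via the Kohler-Jobin rearrangement, and it involves no compactness. We may also assume $\delta_{\rm FK}(\Omega)\le \varepsilon_0(n)$ for an explicit $\varepsilon_0$, since otherwise $\mathcal A\le 2$ gives the claim trivially. It therefore suffices to prove $\mathcal A(\Omega)^2\le C(n)\,\delta_{\rm SV}(\Omega)$ with computable $C(n)$.

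For the Saint--Venant step we run the Talenti level-set argument quantitatively. Let $\Omega_t=\{u>t\}$, $\mu(t)=|\Omega_t|$, and $P(t)=\Per(\Omega_t)$. On $\partial\Omega_t$ we have $\int_{\partial\Omega_t}|\nabla u|=\mu(t)$. Combining Cauchy--Schwarz with the coarea formula gives
\[
P(t)^2\le \mu(t)\int_{\partial\Omega_t}|\nabla u|^{-1}\dHn=-\mu(t)\mu'(t).
\]
We then insert the isoperimetric deficit, $P(t)=(1+\delta_{\rm iso}(t))\,n\omega_n^{1/n}\mu(t)^{(n-1)/n}$, together with the Cauchy--Schwarz defect. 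The latter equals $\mu(t)^{-1}P(t)^{-1}$ times the variance of $|\nabla u|$ along $\partial\Omega_t$, suitably normalized. Integrating the resulting differential inequality and comparing with the radial case yields an identity of the form
\[
\delta_{\rm SV}(\Omega)\gtrsim \int_0^{\max u} w(t)\,\bigl(\delta_{\rm iso}(\Omega_t)+\mathrm{osc}_{\partial\Omega_t}(|\nabla u|)^2\bigr)\,\dd t,
\]
with an explicit positive weight $w$. In words, the torsional deficit controls both the averaged isoperimetric deficit of the level sets and the oscillation of $|\nabla u|$ on their boundaries.

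Next we pass from level sets to $\Omega$. Quantitative isoperimetry with explicit constant (Fusco--Maggi--Pratelli, or Figalli--Maggi--Pratelli) gives $\mathcal A(\Omega_t)^2\lesssim\delta_{\rm iso}(\Omega_t)$. On its own this leads only to nonsharp exponents when $\Omega\setminus\Omega_t$ is controlled, because $|\Omega\setminus\Omega_t|$ is estimated by $t$ via $\mu$ and one then optimizes over $t$. To reach the exponent $2$ we use the gradient oscillation. Small oscillation of $|\nabla u|$ on $\partial\Omega_t$ for many $t$, together with small asymmetry of $\Omega_t$, shows that the level sets are nearly concentric and confined in a thin annular region around a fixed ball. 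This is a strong (Hausdorff-type) closeness, not merely $L^1$. The key ingredient is an effective local strong isoperimetric estimate for sets confined between two concentric balls $B_{r(1-\eta)}\subset E\subset B_{r(1+\eta)}$. For such sets, $\delta_{\rm iso}(E)\gtrsim (|E\Delta B_r|/|B_r|)^2$ with an explicit constant. We would prove it by writing $\partial E$ as a radial graph over the sphere where possible and using the spectral gap of the Laplacian on $\mathbb S^{n-1}$, in the spirit of Fuglede. Non-graphical parts are handled by an explicit slicing/calibration argument, rather than by the regularity theory of selection principles.

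The main obstacle is this last step. It requires showing, without compactness, that the set of levels $t$ on which $\Omega_t$ is confined carries enough mass, and that the mass of $\Omega$ below these levels is bounded by $\delta_{\rm SV}$ linearly rather than by a root. We would first try the following. Pick $t_0$ of order $\delta_{\rm SV}^{1/2}$, scaled so that $|\Omega\setminus\Omega_{t_0}|\lesssim\delta_{\rm SV}^{1/2}|\Omega|$, via the explicit comparison of $\mu$ with its radial counterpart. Apply the confined estimate on a $t$-interval of length comparable to $1$ to get $\mathcal A(\Omega_t)^2\lesssim\delta_{\rm SV}$. Finally, use the triangle inequality for the asymmetry to conclude $\mathcal A(\Omega)^2\le C(n)\delta_{\rm SV}(\Omega)\le C_{\rm FK}(n)\delta_{\rm FK}(\Omega)$.
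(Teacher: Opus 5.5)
You reduce to Saint--Venant via Kohler--Jobin and use the level-set identity, in which $\delta_{\rm SV}$ controls the weighted integral of the defects $D_I$ and $D_H$; both match the paper. (Testing the torsion functional with the first eigenfunction would not give $\delta_{\rm SV}\lesssim\delta_{\rm FK}$, since that bound is not sharp even for the ball. Kohler--Jobin is what is needed.) The gap is the passage from the level sets to $\Omega$.

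Your last step uses a $t$-interval of length comparable to $1$ to find \emph{some} level with $\mathcal A(\Omega_t)^2\lesssim\delta_{\rm SV}$. That level may lie deep inside $\Omega$, with $|\Omega\setminus\Omega_t|$ of order one, so the triangle inequality transfers nothing to $\Omega$. An integrated bound also says nothing at the particular level $t_0\sim\delta_{\rm SV}^{1/2}$. If you instead restrict the selection to the collar where $|\Omega\setminus\Omega_t|\le\ell$, that collar carries $\mu$-mass only about $\ell$. You then get $\mathcal A(\Omega)\lesssim(\delta_{\rm SV}/\ell)^{1/2}+\ell$, hence exponent $3$ at best, which is exactly the loss the paper is built to avoid.

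The ingredient you propose to rescue this does not help. For sets trapped between concentric balls, $\delta_{\rm iso}(E)\gtrsim(|E\Delta B_r|/|B_r|)^2$ is just the sharp quantitative isoperimetric inequality. It is already effective by Figalli--Maggi--Pratelli and already used in the exponent-$3$ argument. The Hausdorff-type confinement you want to feed into it is asserted with no mechanism. It is not a consequence of small deficit: a ball of volume $\omega_n-\varepsilon$ plus a far-away ball of volume $\varepsilon$ has $\delta_{\rm SV}=O(\varepsilon)$. The deficit only gives integrated, $L^1$-type information.

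The missing idea is to \emph{propagate} asymmetry along the foliation instead of selecting one level. The paper parametrizes levels by $|E_\rho|=\omega_n\rho^n$ and studies the Lipschitz moving-ball distance $q(\rho)=\min_z|E_\rho\Delta B_\rho(z)|$. It proves $q'(\rho)\le\frac n\rho q(\rho)+\int_{\partial^*E_\rho}|V_\rho-H_{z_\rho,\rho}|\dHn$. Here $V_\rho=w(\rho)/|\nabla u|$ is the normal speed of the levels and $H_{z,\rho}=(x-z)\cdot\nu_\rho/\rho$ is that of the dilating ball.

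The oscillation part $\int|V_\rho-\overline V_\rho|$ is bounded by $D_H^{1/2}$ via Cauchy--Schwarz. The normal part $\int|\overline V_\rho-H_{z_\rho,\rho}|$ needs the \emph{strong} Fusco--Julin inequality, which controls $\int_{\partial^*E}|\nu_E-(x-z)/|x-z||^2\dHn$ by the isoperimetric deficit. Its effective version for confined sets with small deficit (Hensel--Laux) bounds this term by $C\,D_I^{1/2}$.

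Radii where $w$ is small or $D_I$ is large form a set of measure $\lesssim\delta_{\rm SV}$, and there the Lipschitz bound on $q$ suffices. This gives $\int_{\rho_\star}^1(q^2+(q'_+)^2)\,\dd\rho\lesssim\delta_{\rm SV}$. Then $q(1)\le\frac{1}{1-\rho_\star}\int_{\rho_\star}^1q\,\dd\rho+\int_{\rho_\star}^1q'_+\,\dd\rho\lesssim\delta_{\rm SV}^{1/2}$. Because the strong estimate needs confinement in a fixed ball, unbounded $\Omega$ is handled at the end by a truncation lemma built on the exponent-$3$ estimate. Your proposal has no counterpart to this step either.
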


In particular, assuming for simplicity that $|\Omega| = \omega_n$, for every $\varepsilon>0$, the condition
$\mathcal A(\Omega)\ge\varepsilon$ gives the explicit bound
\[
  \lambda_1(\Omega)-\lambda_1(B_1)
  \ge \frac{\varepsilon^2}{C_{\rm FK}(n)}.
\]
Thus the result provides explicit rigidity thresholds in every argument in
which a lower bound for the asymmetry is known.

The proof's first step passes through the Saint--Venant inequality. Let $u=u_\Omega$ solve
\[
  -\Delta u=1\quad\text{in }\Omega,
  \qquad
  u=0\quad\text{on }\partial\Omega,
\]
and define the torsional rigidity by
\[
  T(\Omega)\eqdef\int_\Omega u\dd x,
\]
where this time, the scaling law is given by $T(t\Omega) = t^{n+2}T(\Omega)$, see~\ref{lemma.torsion_scaling}. Therefore, among sets with fixed volume, the Saint--Venant inequality states that the torsion is maximized by the corresponding balls, which can be expressed in the scale invariant form as 
\[
    \delta_{\SV}(\Omega)
    \eqdef 
    |B|^{-\frac{n+2}{n}}T(B)-|\Omega|^{-\frac{n+2}{n}}T(\Omega)
    \ge 0, 
\]
where $B$ is any euclidean ball thanks to the scale invariance.

The link between the Faber--Krahn and Saint--Venant is then established via the Kohler--Jobin inequality~\cite{kohler-jobin1978methode,brasco2014torsional}, which states that for any open set $\Omega \subset \mathbb{R}^n$ it holds that
\[
  \lambda_1(\Omega)T(\Omega)^{\frac{2}{n+2}}
  \ge
  \lambda_1(B_1)T(B_1)^{\frac{2}{n+2}},
\]
notice that the scaling factors of the two quantities cancel each other on both sides. Therefore, our main target will actually be proving the following result:
\begin{theorem}\label{thm.sharp_Saint_Venant_unbounded}
For every $n\ge2$ there is a computable constant $C_{\rm SV}(n)>0$ such
that every open set $\Omega\subset\mathbb R^n$ with $|\Omega|=\omega_n$
satisfies
\begin{equation}\label{eq.intro_quant_SV}
  \mathcal A(\Omega)^2
  \le C_{\rm SV}(n)\delta_{\rm SV}(\Omega).
\end{equation}
\end{theorem}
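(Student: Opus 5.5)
Following the abstract, the proof goes through a level-set proof of the Saint--Venant inequality. Write $u=u_\Omega$ and $\Omega_t=\{u>t\}$, $\mu(t)=|\Omega_t|$, $P(t)=\Per(\Omega_t)$, and $g=|\nabla u|$ on $\partial\Omega_t$. Two identities drive everything. Integrating $-\Delta u=1$ over $\Omega_t$ gives $\int_{\partial\Omega_t} g\dHn=\mu(t)$. The coarea formula gives $-\mu'(t)=\int_{\partial\Omega_t} g^{-1}\dHn$. Cauchy--Schwarz then yields
\[
P(t)^2\le \int_{\partial\Omega_t} g\dHn\int_{\partial\Omega_t}g^{-1}\dHn = -\mu(t)\mu'(t).
\]
Combining this with the isoperimetric inequality $P(t)\ge n\omega_n^{1/n}\mu(t)^{(n-1)/n}$ gives a differential inequality for $\mu$. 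Equality in it holds for the ball, and integrating $T(\Omega)=\int_0^\infty\mu(t)\dd t$ recovers Saint--Venant.

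The plan is to keep both error terms exactly. Set $P(t)=(1+\delta(t))P_{\rm iso}(\mu(t))$, with $\delta(t)$ the isoperimetric deficit of $\Omega_t$, and write the Cauchy--Schwarz defect as an oscillation term $\sigma(t)$ measuring $\int_{\partial\Omega_t}|g-\bar g_t|^2$ suitably normalized. Integrating the resulting ODE comparison, I would show that $\delta_{\rm SV}(\Omega)$ controls a weighted integral
\[
\int_0^{\max u} w(t)\bigl(\delta(t)+\sigma(t)\bigr)\dd t,
\]
with an explicit weight $w$ comparable to $\mu(t)^{\alpha}$. This gives the two statements in the abstract: the average isoperimetric deficit and the gradient oscillation on level sets are controlled by the torsional deficit. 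Applying Markov's inequality to this integral selects a level $t_0$ such that three things hold: $\mu(t_0)\ge(1-C\delta_{\rm SV}^{1/2})\omega_n$, or at least a fixed fraction of $\omega_n$; $\delta(t_0)\lesssim\delta_{\rm SV}$; and $\sigma(t_0)\lesssim\delta_{\rm SV}$. The constants here are explicit.

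Using only $\delta(t_0)$ would give $\mathcal A(\Omega_{t_0})\lesssim\delta(t_0)^{1/2}$ by the sharp quantitative isoperimetric inequality. That is exponent $2$ only for $\Omega_{t_0}$, and it loses the bulk $\Omega\setminus\Omega_{t_0}$, since $|\Omega\setminus\Omega_{t_0}|$ is only controlled at a worse rate. The missing ingredient is the \emph{effective local strong isoperimetric estimate for confined sets}. Smallness of $\sigma(t_0)$ says that $g$ is nearly constant on $\partial\Omega_{t_0}$. Together with the first identity, this forces $\partial\Omega_{t_0}$ to behave like a set whose boundary is Hausdorff-close to a sphere, which gives an $L^\infty$/graph-type confinement. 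For sets confined in a thin annulus around a ball, I would prove directly, by a Fuglede-type spherical harmonic expansion applied to the radial function and without compactness, that the perimeter deficit controls the squared $L^2$ oscillation of the radius. From this one deduces $|\Omega_{t_0}\Delta B|^2\lesssim\delta(t_0)$ with an explicit constant. The remaining mass $|\Omega\setminus\Omega_{t_0}|=\omega_n-\mu(t_0)$ must then be shown to be $\lesssim\delta_{\rm SV}^{1/2}$. For this I would integrate the ODE comparison near $t=0$, showing that thin low-level regions cost torsion linearly in their measure.

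The main obstacle is the confinement step: converting $\sigma(t_0)$ small, which is only an $L^2$ oscillation, into genuine geometric confinement of $\Omega_{t_0}$ near a sphere, while keeping constants explicit. My first attempt would use the fact that $u-t_0$ solves the torsion problem in $\Omega_{t_0}$ with nearly constant Neumann data. Comparing with the radial solution $(r^2-|x|^2)/2n$ via a Serrin/Weinberger-type P-function integral identity ($P=|\nabla u|^2+\tfrac{2}{n}u$) then gives an explicit stability estimate for this overdetermined problem. Finally, the reduction from general open $\Omega$ to this setting and the case of large asymmetry are handled trivially: if $\mathcal A(\Omega)\ge\varepsilon_0$, then $\delta_{\rm SV}$ is bounded below by an explicit non-sharp estimate such as the Brasco--De Philippis exponent-$3$ analogue for torsion.
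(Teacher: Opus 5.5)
Your first step matches the paper's exact identity $\delta_{\rm SV}=\int (D_H+D_I)/(c_n m^{1-2/n})\,\dd t$. There is, however, a genuine gap in what you do next: selecting a single level $t_0$, which is exactly where the sharp exponent is won or lost.

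The identity gives only an \emph{integrated} bound. In the radius variable $|\Omega_t|=\omega_n\rho^n$ it reads $\int_0^1 (D_H+D_I)\,w\,\dd\rho/(n^2\omega_n\rho^{n-2})=\delta_{\rm SV}$, with $w=-t'(\rho)$. Markov's inequality on a collar of mass defect $\varepsilon$ (an interval of radii of length $\sim\varepsilon$) therefore only yields a level whose isoperimetric deficit and oscillation are $\lesssim\delta_{\rm SV}/\varepsilon$. So you cannot obtain $\mu(t_0)\ge(1-C\delta_{\rm SV}^{1/2})\omega_n$ and $\delta(t_0)\lesssim\delta_{\rm SV}$ together. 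The best trade-off, $\mathcal A(\Omega)\lesssim(\delta_{\rm SV}/\varepsilon)^{1/2}+\varepsilon$, gives exponent $3$, which is exactly the paper's suboptimal Theorem~\ref{thm:suboptimal_stabilitySV}. If instead you take $t_0$ at a fixed mass fraction, then $\omega_n-\mu(t_0)$ is of order one even for the ball, so the claim $|\Omega\setminus\Omega_{t_0}|\lesssim\delta_{\rm SV}^{1/2}$ is false. What you would really need is that the collar $\Omega\setminus\Omega_{t_0}$ is $\delta_{\rm SV}^{1/2}$-close to the annulus concentric with the ball approximating $\Omega_{t_0}$, and a scalar ODE comparison for $\mu(t)$ says nothing about \emph{where} that mass sits.

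Your proposed missing ingredient does not address this. The bound $|\Omega_{t_0}\Delta B|^2\lesssim\delta(t_0)$ is just the sharp quantitative isoperimetric inequality, already available with an explicit constant. The confinement step is also unsupported. $L^2$-smallness of the oscillation of $|\nabla u|$ on a level set of the torsion function of an arbitrary open set does not give Hausdorff closeness to a sphere: quantitative Serrin estimates need uniform regularity of the level set, which only a selection/free-boundary argument would provide. Likewise, a Fuglede expansion needs a Lipschitz radial graph, not mere confinement in an annulus.

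The idea you are missing is to avoid selecting a level at all and instead control the \emph{variation} of the asymmetry across the whole foliation. The paper sets $q(\rho)=\min_z|E_\rho\Delta B_\rho(z)|$, which is $2n\omega_n$-Lipschitz. It shows that for a.e.\ $\rho$,
$q'(\rho)\le\frac n\rho q(\rho)+\int_{\partial^*E_\rho}|V_\rho-H_{z_\rho,\rho}|\dHn$, where $V_\rho=w(\rho)/|\nabla u|$ and $H_{z,\rho}=(x-z)\cdot\nu_\rho/\rho$.

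\begin{itemize}
\item The oscillation part $\int|V_\rho-\overline V_\rho|$ is at most $D_H^{1/2}$ by Cauchy--Schwarz alone, with no confinement needed.
\item The normal part $\int|\overline V_\rho-H_{z_\rho,\rho}|$ is at most $C D_I^{1/2}$ by the Fusco--Julin strong isoperimetric inequality, in the effective perturbative form of Hensel--Laux.
\end{itemize}

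There, ``confined'' means $F\subset B_{R_0}$, not confinement in a thin annulus. This is why the paper first treats $\Omega\subset B_R$ and then removes that assumption with a truncation lemma (tail estimate plus the exponent-$3$ bound); that step is not trivial.

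Splitting $[\rho_\star,1]$ into good radii and an exceptional set of measure $\lesssim\delta_{\rm SV}$ gives $\int_{\rho_\star}^1(q^2+(q'_+)^2)\,\dd\rho\lesssim\delta_{\rm SV}$. Averaging $q(1)=q(s)+\int_s^1q'$ over $s\in[\rho_\star,1]$ then yields $\omega_n\mathcal A(\Omega)=q(1)\lesssim\delta_{\rm SV}^{1/2}$. This $L^2$ control of the derivative of the asymmetry is what replaces the lossy Chebyshev selection.
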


Indeed, once Theorem~\ref{thm.sharp_Saint_Venant_unbounded} is established the sharp stability of Faber--Krahn with a computable constant follows. 

\begin{proof}[Proof of Theorem~\ref{thm.sharp_FK}]
The Kohler--Jobin inequality gives
\[
 T(\Omega)\le T(B_1)
 \left(\frac{\lambda_1(B_1)}{\lambda_1(\Omega)}\right)^{(n+2)/2},
\]
thanks to the scale invariance we can take the unit ball for both quantities $\lambda_1(B_1)$ and $T(B_1)$. Since $1-(1+x)^{-(n+2)/2}\le (n+2)x/2$ for $x\ge0$,
\[
 \delta_{\rm SV}(\Omega)
 \le\frac{n+2}{2}\frac{T(B_1)}{\lambda_1(B_1)}
 \delta_{\rm FK}(\Omega).
\]
Theorem~\ref{thm.sharp_Saint_Venant_unbounded} proves the result with
\[
 C_{\rm FK}(n)
 =\frac{n+2}{2}\frac{T(B_1)}{\lambda_1(B_1)}C_{\rm SV}(n).
\]
This constant is computable: $T(B_1)=\omega_n/[n(n+2)]$ and
$\lambda_1(B_1)=j_{n/2-1,1}^2$, is given by the first positive zero o the Bessel function of first kind, which admits computable numerical approximations.
\end{proof}

Our proof of Theorem~\ref{thm.sharp_Saint_Venant_unbounded} starts from an exact level-set identity.
For
\[
  E_t\eqdef\{u>t\},\qquad m(t)\eqdef|E_t|,
\]
set
\[
  \alpha(t)\eqdef\int_{\partial^*E_t}\frac{1}{|\nabla u|}\dHn,
  \qquad
  \beta(t)\eqdef\int_{\partial^*E_t}|\nabla u|\dHn,
  \qquad
  c_n\eqdef n^2\omega_n^{2/n},
\]
and
\[
  D_H(t)\eqdef\alpha(t)\beta(t)-\Per(E_t)^2,
  \qquad
  D_I(t)\eqdef\Per(E_t)^2-c_n\,m(t)^{2-2/n}.
\]
We prove
\begin{equation}\label{eq.intro_torsion_identity}
  \delta_{\SV}(\Omega)
  =\int_0^{\|u\|_\infty}
  \frac{D_H(t)+D_I(t)}{c_n\,m(t)^{1-2/n}}\dd t.
\end{equation}
The nonnegativity of $D_H$ follows from the Cauchy--Schwarz inequality, while
that of $D_I$ follows from the isoperimetric inequality. Identity
\eqref{eq.intro_torsion_identity} therefore gives a direct level-set proof of
the Saint--Venant inequality and separates its two sources of deficit.

The main reason the proofs from~\cite{FuscoMaggiPratelli2009Eigenvalue,brasco2017spectral} yield exponents $3$ and $4$, and hence fail to produce a sharp result, is that at some point they use a Chebyshev inequality in order to select a certain level set with controlled asymmetry. This, however, almost always induces a loss, as on average all level sets must satisfy much better asymmetry.

Therefore, to obtain the sharp quantitative estimate, we parametrize the level sets by
$|E_\rho|=\omega_n\rho^n$ and consider
\[
  q(\rho)\eqdef
  \inf_{z\in\mathbb R^n}|E_\rho\mathbin\Delta B_\rho(z)|.
\]
The idea is to show that the \emph{variation} of the function $q$ is small in terms of $\delta_{\SV}(\Omega)$. This induces a much stronger level-selection mechanism. Once that is done, we basically reuse the device used in the older proofs to conclude.

In order to bound the variation of $q$, let $z_\rho$ be a minimizing center for its definition. If $V_\rho$ denotes, roughly, a function proportional to the inverse of the magnitude of $\nabla u$ on level sets of $u$, and $H_{z,\rho}$ detects how much the normal at that same level set deviates from the normal at the circle, then the upper right derivative $D^+q$ satisfies
\[
  D^+q(\rho)
  \le \frac{n}{\rho} q(\rho)
  + (\text{distance between } V_\rho \text{ and } H_{z,\rho} \text{ on the level set)}.
\]
We then split the distance on the right-hand side above into two contributions. The first is an \emph{oscillation term}, which measures how much $V_{\rho}$ varies. As it turns out, the oscillation of $V_\rho$ is controlled by $D_H$ almost directly from the definition.

We are then left with estimating how much the function $H_{z,\rho}$ varies. As perhaps a surprise, this can be precisely estimated: the strong quantitative
isoperimetric inequality of Fusco--Julin~\cite{fusco2014strong} controls the
remaining normal term by $D_I$. While the global proofs in \cite{fusco2014strong,hensel2026quantitative}
use compactness, we only need the perturbative estimate for confined sets proved directly by Hensel and Laux~\cite[Theorem~2]{hensel2026quantitative}. Therefore, in Proposition~\ref{prop.FJ-strong} below we verify that its smallness threshold can be chosen computably.

Finally, we note that many of the arguments mentioned above need the boundedness assumption $\Omega\subset B_R$. We first carry out our program in that case and, by using truncation methods originally available in \cite{brasco2015faber}, together with the global, nonsharp estimate
\[
  \mathcal A(\Omega)^3\le C(n)\delta_{\SV}(\Omega),
\]
we remove these conditions at the end. The
Kohler--Jobin inequality then concludes the proof of the effective sharp
quantitative Faber--Krahn inequality.

 Section 2 contains the required facts on torsion functions and finite-perimeter
sets, the exact identity relating $\delta_{\SV}(\Omega)$ to $D_H$ and $D_I$,
and the effective nonsharp estimate with exponent $3$. Section 3 proves the
effective sharp Saint--Venant inequality for bounded domains. Section 4
removes the boundedness hypothesis. Finally, Section 5 contains a generalization and a discussion of possible alternative routes to effectivity.

\section*{Acknowledgments}

J. P. G. R.  was supported by the FCT through project SHADE
(2023.17881.ICDT), DOI \texttt{10.54499/}\allowbreak
\texttt{2023.17881.ICDT}. He was also supported  by FAPERJ through  JCNE grant
no.~SEI-260003/\allowbreak020475/2025, and by Instituto Serrapilheira through
grant Serra-R-2510-59765.

\section*{LLM Usage}

Large Language Models played a non-trivial role in the discovery process of the proof below. The original idea for this manuscript started as a discussion between the first and third named authors, where they observed that, in none of the proofs, the fact that one has quantitative control on the variation of the gradient on the boundary of level sets is properly used. Part of the inspiration came from their joint work on the stability of the Faber--Krahn inequality for the short-time Fourier transform~\cite{GomezGuerraRamosTilli2024}, whose proof uses simultaneously all the analytic and geometric information available in that problem.

By interacting with GPT 5.4, it was suggested to the authors by the LLM that the main issue with most of the `simple' proofs of non-sharp stability is choosing a level set sufficiently close to the boundary in which the deficit is comparable to the original one. This sparked the idea of looking at the variation of the asymmetries. 

Another extensive round of interactions -- with several failed routes being explored and discarded with the aid of GPT 5.4 and 5.5 -- generated the fact that the derivative of the asymmetry of the level sets \emph{exists}, as that is a Lipschitz function, and it can be controlled by quantities directly related to the problem. It was then suggested by GPT 5.5, and later confirmed by us, that the variation in question might be related to the recent Fusco--Julin inequality (Lemma \ref{lemma.estimate_moving_ball_derivative}). The final quantitative part, which is the effective version of Fusco--Julin for small deficit (\cite{hensel2026quantitative}; see Proposition \ref{prop.FJ-strong}), was then found by us. 

The final writeup of this manuscript was human-made, with AI tools only being employed in typographical fixes and editing, as well as in sourcing many of the references. The authors have written and checked all proofs, and they take full responsibility for the contents of this manuscript.

\section{Preliminaries and the torsion level-set proof of the Saint-Venant inequality}

We start with the qualitative Saint--Venant inequality. Let $\Omega$ be an
open subset of $\mathbb R^n$. Its torsion function $u=u_\Omega$ is the unique
weak solution of
 \begin{equation}\label{eq.torsion_function}
  \left\{
  \begin{aligned}
    -\Delta u &= 1 && \text{in }\Omega\\
    u &= 0 && \text{on }\partial\Omega .
  \end{aligned}
  \right.
\end{equation}
We recall that the torsion functional can be defined as
\begin{equation}
    T(\Omega) = \int_{\Omega} u_\Omega \dd x.
\end{equation}
It also admits a dual formulation; defining
\begin{equation}\label{eq.torsion_dual_formulation}
     \mathcal{E}(\Omega)\eqdef
     \min_{v\in W_0^{1,2}(\Omega)}
     \left\{\frac{1}{2}\int_\Omega|\nabla v|^2\dd x-\int_\Omega v\dd x\right\}
     \text{ we have } 
     \mathcal{E}(\Omega) = - \frac{1}{2} T(\Omega). 
\end{equation}
Throughout this article, we will privilege the formulation via the torsion function~\eqref{eq.torsion_function} to study the rigidity properties of its level sets. But regardless of the formulation, the Saint--Venant inequality states that  torsion is maximized by balls:
\[
   T(\Omega)\le T(B_\star),
\]
where $B_\star$ is a ball with $|B_\star|=|\Omega|$. We first compute the
torsion of a ball and record its scaling.

\begin{lemma}\label{lemma.torsion_scaling}
  The following identities hold
  \begin{enumerate}
    \item $T(B_1) = \displaystyle \frac{\omega_n}{n(n+2)}$;
    \item $T(t\Omega) = t^{n+2}T(\Omega)$ for all $t>0$.
  \end{enumerate}
\end{lemma}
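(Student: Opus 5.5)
For (1), I would write down the torsion function of the unit ball explicitly. The natural candidate is the radial function
\[
u_{B_1}(x)=\frac{1-|x|^2}{2n},
\]
since $\Delta |x|^2 = 2n$ gives $-\Delta u_{B_1}=1$, and $u_{B_1}$ vanishes on $\partial B_1$. It is smooth up to the boundary, so it belongs to $W_0^{1,2}(B_1)$ and is the weak solution of \eqref{eq.torsion_function}; by uniqueness it is \emph{the} torsion function. I would then integrate in polar coordinates, using $\mathcal H^{n-1}(\partial B_1)=n\omega_n$:
\[
T(B_1)=\frac{1}{2n}\int_0^1 (1-r^2)\, n\omega_n r^{n-1}\dd r
=\frac{\omega_n}{2}\Big(\frac1n-\frac1{n+2}\Big)=\frac{\omega_n}{n(n+2)}.
\]

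For (2), I would fix $t>0$ and set $v(y)=t^2u_\Omega(y/t)$ for $y\in t\Omega$. The map $w\mapsto w(\cdot/t)$ is a bijection from $W_0^{1,2}(\Omega)$ onto $W_0^{1,2}(t\Omega)$, so $v\in W_0^{1,2}(t\Omega)$. Each derivative contributes a factor $t^{-1}$, so $\Delta v(y)=(\Delta u_\Omega)(y/t)$, and hence $-\Delta v=1$ in $t\Omega$. At the weak level I would check this by changing variables in the test-function identity $\int\nabla u_\Omega\cdot\nabla\varphi=\int\varphi$. Uniqueness then gives $u_{t\Omega}=v$, and the change of variables $y=tx$ yields
\[
T(t\Omega)=\int_{t\Omega}t^2u_\Omega(y/t)\dd y=t^{n+2}\int_\Omega u_\Omega\dd x = t^{n+2}T(\Omega).
\]

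There is no real obstacle here. The only step needing care is justifying the weak formulation for an arbitrary open set of finite measure, which I would handle by the scaling bijection between the $W_0^{1,2}$ spaces described above. Alternatively, I would argue through the dual formulation \eqref{eq.torsion_dual_formulation}: the functional scales by exactly $t^{n+2}$ under $w\mapsto t^2w(\cdot/t)$, and this avoids any regularity concerns.
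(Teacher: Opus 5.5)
Your proposal is correct and follows essentially the same route as the paper: the explicit radial torsion function of the ball integrated in polar coordinates (the paper does this for a general $B_r$), followed by the rescaling $t^2u_\Omega(\cdot/t)$, uniqueness, and a change of variables. Your extra care with the weak formulation and with the bijection between the $W_0^{1,2}$ spaces is a welcome refinement of the paper's brief appeal to uniqueness.
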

\begin{proof}
  By exploiting the symmetry of a ball $B_r$, it is easy to verify that the function
  \[
    u_{B_r} \eqdef \frac{r^2 - |x|^2}{2n}
  \]
  solves the Poisson equation~\eqref{eq.torsion_function} for $\Omega = B_r$. From uniqueness of solutions to this PDE in this smooth setting, it must be the torsion function.

  Integrating $u_{B_r}$ in polar
  coordinates, and writing $\sigma_{n-1}=\mathcal H^{n-1}(\partial B_1)=n\omega_n$,
  \[
    T(B_r)=\int_{B_r}\frac{r^2-|x|^2}{2n}\dd x
    =\frac{\sigma_{n-1}}{2n}\int_0^r(r^2-\rho^2)\rho^{n-1}\dd \rho
    = \frac{\omega_n}{n(n+2)}\,r^{\,n+2}.
  \]

  The above formula gives the torsion of the unit ball by taking $r = 1$, but also reveals the correct scaling of the torsion functional. Now taking a general $\Omega$ open subset of $\mathbb{R}^n$, if $t>0$ and
  \[
      w_t(x)\eqdef t^2 u_\Omega\left(\frac{x}{t}\right), \text{ then }
      \begin{cases}
        - \Delta w_t = 1,& \text{ in } t \Omega,\\
        w_t = 0 ,& \text{ on } t \partial \Omega.
      \end{cases}
  \]
  Once again by uniqueness we have $u_{t\Omega}=w_t$, and the change of
  variables $x=ty$ gives
  \[
    T(t\Omega)=\int_{t\Omega}w_t\dd x=\int_\Omega t^2u_\Omega(y)\,t^n\dd y=t^{n+2}T(\Omega),
  \]
  which finishes the proof.
\end{proof}

Thanks to the scaling of the torsion functional proved above, we can assume without loss of generality that the domain is normalized to satisfy $|\Omega| = \omega_n$. We then define the mass function $m: [0, \|u\|_\infty] \to [0,\omega_n]$ via the family of super level sets of $u$, that is

 \[
  E_t\eqdef\{u>t\},\qquad m(t)\eqdef|E_t|,\qquad \text{for }0<t<\|u\|_\infty.
\]
Our goal is to study the fine properties of almost every level set $E_t$ and integrate them back to the torsion functional. A key tool is comparing the perimeter of the level sets with the perimeter of a ball of the same volume, which is given by the classical isoperimetric inequality, valid for sets of finite perimeter. Therefore, we briefly introduce this class here.
\begin{definition}\label{def.set_finite_perimeter}
    Let $E\subset\mathbb{R}^n$ be a measurable set. We say that $E$ has finite perimeter if its indicator function $\mathds{1}_{E}$ admits a distributional derivative $D\mathds{1}_{E} \in \mathscr{M}(\mathbb{R}^n;\mathbb{R}^n)$ that is a finite Radon measure. In this case, we define the perimeter of $E$ as
    \[
      \Per(E) \eqdef |D\mathds{1}_{E}|(\mathbb{R}^n).
    \]
\end{definition}

This is a very abstract definition instead of naively imposing the topological boundary $\partial E$ to have a finite $(n-1)$-dimensional Hausdorff measure. This notion is more robust and allows for a more general class of sets, which is crucial for the analysis of level sets of Sobolev functions. For sets of finite perimeter the more adapted notion is the \textit{reduced boundary} denoted by $\partial^* E$, see~\cite[Chapter~15]{maggi2012sets}, which is defined as the set of points $x \in \supp D \mathds{1}_E$ such that
\[
  \nu_E(x)\eqdef
  \lim_{r \to 0^+}
  -\frac{D \mathds{1}_E(B_r(x))}{|D \mathds{1}_E|(B_r(x))}
  \text{ exists and belongs to }
  \mathbb{S}^{n-1}.
\]
The Borel vector field $\nu_E$ is called the measure-theoretic outer unit normal to $E$ on its reduced boundary $\partial^* E$. De Giorgi's structure theorem states that the reduced boundary $\partial^* E$ is $(n-1)$-rectifiable, the following characterization of the weak gradient holds
\begin{equation}\label{eq.charac_indicator_weak_grad}
  D\mathds{1}_E = -\nu_E \mathcal{H}^{n-1}\mres\partial^* E,
  \text{ and }
  \Per(E) = \mathcal{H}^{n-1}(\partial^* E).
\end{equation}
With this technology, it is natural that the isoperimetric inequality
\[
  \Per(E) \ge n\omega_n^{1/n}|E|^{(n-1)/n}
\]
holds for sets of finite perimeter, see for instance~\cite{ambrosio2000functions,maggi2012sets}

These properties can be lifted to the analogous class of functions of bounded variation over an open $\Omega \subset \mathbb{R}^n$ and denoted by $\BV(\Omega)$. We say that $f \in L^1(\Omega)$ belongs to $\BV(\Omega)$ if its distributional derivative $Df \in \mathscr{M}(\Omega;\mathbb{R}^n)$ is a finite Radon measure. The total variation of $f$ is then defined as
\[
  \TV(f) \eqdef |Df|(\Omega),
\]
and provides the link to the perimeter of sets of finite perimeter via the co-area formula~\cite{ambrosio2000functions,EvansGariepy2015}, which states that
\[
  \TV(f) = \int_{-\infty}^{+\infty} \Per(\{f>t\}) \dd t.
\]

\subsection{The qualitative Saint-Venant inequality}\label{sec.qualitative_SV}
Coming back with this technology to the torsion function $u$, defined in~\eqref{eq.torsion_function}, we have that $u \in W_0^{1,2}(\Omega) \subset \BV(\Omega)$, and therefore the co-area formula applies to $u$. This allows us to study the level sets $E_t$ of $u$ and their perimeters $\Per(E_t)$ for almost every $t \in (0, \|u\|_\infty)$. To this end, it is useful to introduce the following quantities that will be recurrent in this work
\[
  \alpha(t)\eqdef \int_{\partial^*E_t}\frac{1}{|\nabla u|}\dHn,\quad
  \beta(t)\eqdef\int_{\partial^*E_t}|\nabla u|\dHn,\quad
  c_n\eqdef n^2\omega_n^{2/n}.
\]

\begin{lemma}\label{lemma.preliminary_alpha_beta}
The function $m$ is continuous and strictly decreasing on $(0,\|u\|_\infty)$,
and is locally absolutely continuous there. For almost every $t\in(0,\|u\|_\infty)$, the set $E_t$ has finite perimeter, its measure-theoretic (inner) unit normal is given by
\[
  \nu_{E_t}=-\frac{\nabla u}{|\nabla u|}\quad\text{on }\partial^*E_t,
\]
and
 \[
  -m'(t)=\alpha(t),\qquad \beta(t)=m(t).
\]
Moreover $\|u\|_\infty\le(2n)^{-1}$.
\end{lemma}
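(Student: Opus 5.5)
The plan is to combine three standard ingredients: the comparison principle for the bound on $\|u\|_\infty$, the coarea formula for the properties of $m$, and the weak formulation of $-\Delta u=1$ tested against truncations of $u$ for the identity $\beta=m$.

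\textbf{Step 1: the sup bound.} Extend $u$ by zero outside $\Omega$. The weak maximum principle gives $u\ge0$. For the upper bound, fix a ball $B_R$ with $R$ large and compare $u$ with the explicit torsion function $u_{B_R}$ from Lemma~\ref{lemma.torsion_scaling}, at least when $\Omega$ is bounded. A cleaner route that works for every open set of measure $\omega_n$ is Talenti's comparison via Schwarz symmetrization: $u^*\le u_{B_1}$ pointwise. This yields
\[
\|u\|_\infty\le u_{B_1}(0)=(2n)^{-1}.
\]
To stay self-contained, I would derive this from the differential inequality of Step 4 below.

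\textbf{Step 2: regularity and nondegeneracy of $u$.} Interior elliptic regularity gives $u\in C^\infty(\Omega)$, and $u$ is real-analytic since $-\Delta u=1$. Hence $\{\nabla u=0\}\cap\{u>0\}$ has zero Lebesgue measure. Indeed, on that set $D^2u=0$ a.e. (the gradient of a Sobolev function vanishes a.e. on the zero set of the function), which contradicts $\Delta u=-1$. By Sard's theorem, a.e. $t$ is a regular value. For such $t$, $\{u=t\}$ is a smooth hypersurface in $\Omega$ that coincides with $\partial^*E_t$ up to $\mathcal H^{n-1}$-null sets, and the outer normal is $-\nabla u/|\nabla u|$. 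Finite perimeter for a.e. $t$ follows from the coarea formula, since $\int_0^\infty \Per(E_t)\,\dd t=\int|\nabla u|<\infty$.

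\textbf{Step 3: properties of $m$.} Apply the coarea formula on $\{u>t\}$:
\[
m(t)=|\{u>t,\nabla u=0\}|+\int_t^{\infty}\alpha(s)\,\dd s.
\]
The first term vanishes by Step 2. Hence $m$ is locally absolutely continuous with $-m'=\alpha$ a.e. Continuity also follows because $|\{u=t\}|=0$, again by Step 2. For strict monotonicity, suppose $m$ were constant on an interval $(t_1,t_2)$. Then the open set $\{t_1<u<t_2\}$ would have measure zero, hence be empty. This contradicts continuity of $u$ on the connected component of $\Omega$ containing a point where $u>t_2$, since $u$ vanishes at the boundary.

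\textbf{Step 4: the identity $\beta=m$.} Test the weak equation with $\varphi_\varepsilon=\min\{(u-t)^+,\varepsilon\}/\varepsilon\in W^{1,2}_0(\Omega)$. This gives
\[
\frac1\varepsilon\int_{\{t<u<t+\varepsilon\}}|\nabla u|^2=\int\varphi_\varepsilon .
\]
By the coarea formula, the left side equals $\varepsilon^{-1}\int_t^{t+\varepsilon}\beta(s)\,\dd s$, which tends to $\beta(t)$ at Lebesgue points. The right side tends to $m(t)$ by monotone convergence. Hence $\beta(t)=m(t)$ for a.e. $t$.

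Now the sup bound of Step 1 follows. Cauchy--Schwarz and the isoperimetric inequality give
\[
\alpha\beta\ge\Per(E_t)^2\ge c_n m^{2-2/n},
\]
so $-m'\ge c_n m^{1-2/n}$. Integrating, $m(t)^{2/n}\le \omega_n^{2/n}-\tfrac{2}{n}c_n t$, and therefore $m$ vanishes before $t=\omega_n^{2/n}n/(2c_n)=(2n)^{-1}$.

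\textbf{Main obstacle.} I expect the main difficulty to be the rigorous justification of the boundary behaviour for a general open set, since no boundary regularity is available. The plan avoids it because every step uses only interior level sets $\{u>t\}$ with $t>0$, together with the truncations above, which lie in $W^{1,2}_0(\Omega)$.
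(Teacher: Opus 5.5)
Your proposal is correct. For $-m'=\alpha$, for $|\{\nabla u=0\}\cap\{u>0\}|=0$ via Stampacchia, and for the sup bound from $-m'\ge c_n m^{1-2/n}$, it follows the paper.

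The genuine difference is the identity $\beta=m$. The paper obtains it from the BV Gauss--Green formula on $E_t$, applied to the field $\nabla u$. This is a one-line computation, valid at every $t$ with $\Per(E_t)<\infty$. However, it needs $\nabla u$ to be a $C^1$ field on a neighbourhood of $\overline{E_t}$. That is automatic when $\overline{E_t}\Subset\Omega$, e.g.\ for smooth bounded $\Omega$. For a general open set, $\overline{E_t}$ may reach irregular points of $\partial\Omega$ and $E_t$ may be unbounded, so an extra approximation step is implicitly required. Your test functions $\varphi_\varepsilon=\min\{(u-t)^+,\varepsilon\}/\varepsilon$ only have to lie in $W^{1,2}_0(\Omega)$. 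So your argument works verbatim for every open set of finite measure. The price is that it gives the identity only at Lebesgue points of $\beta$, which is all the lemma claims anyway.

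For strict monotonicity, the paper simply uses $-m'=\alpha\ge c_n m^{1-2/n}>0$ a.e.\ on $(0,\|u\|_\infty)$. Your connectedness argument also works, but ``$u$ vanishes at the boundary'' is not literally true pointwise for irregular $\Omega$. What you actually need is $\inf_U u=0$ on each connected component $U$. This follows from $u\in W^{1,2}_0$: if $u\ge c>0$ on $U$, then $\min\{u,c\}/c=\mathbf 1_U\in W^{1,2}(\mathbb R^n)$, which is impossible since $0<|U|<\infty$. More simply, you can invoke the differential inequality you derive in Step~4 anyway.

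Likewise, Sard's theorem makes $\{u=t\}$ a smooth hypersurface inside $\Omega$. Identifying it with $\partial^*E_t$ also requires $\mathcal H^{n-1}(\partial^*E_t\setminus\Omega)=0$. This holds for a.e.\ $t$, but not a priori for every regular value: it follows from the BV coarea formula, because $|Du|(\mathbb R^n\setminus\Omega)=0$. The paper passes over this point too.
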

\begin{proof}
Extend $u$ by zero outside $\Omega$. Since $|\Omega|<\infty$,
$u\in W_0^{1,2}(\Omega)\subset W_0^{1,1}(\Omega)\subset\BV(\mathbb R^n)$, the coarea formula gives
  \[
    \int_a^b\Per(E_t)\dd t =\int_{\{a<u<b\}}|\nabla u|\dd x<\infty
\]
for $0\le a<b\le\|u\|_\infty$. Thus $E_t$ has finite perimeter for almost every $t$. Local elliptic regularity and Stampacchia's lemma imply
$|\{\nabla u=0\}|=0$: indeed, the second derivatives of $u$ vanish almost
everywhere on $\{\nabla u=0\}$, whereas $\Delta u=-1$ in $\Omega$.
The coarea formula applied with $|\nabla u|^{-1}$ therefore gives
\[
  m(a)-m(b)
  =\int_a^b\int_{\partial^*E_t}\frac{1}{|\nabla u|}\dHn\dd t.
\]
Consequently, $m$ is locally absolutely continuous and
$-m'(t)=\alpha(t)$ for almost every $t$. In addition, the coarea formula also gives
$\nu_{E_t}=-\nabla u/|\nabla u|$ on $\partial^*E_t$ for almost every $t$.

It remains to prove the identity $\beta(t) = m(t)$ for almost every $t$. It is tempting to use $\mathbf1_{E_t}$ as a test function in the Poisson equation solved by $u$. Although $\mathbf1_{E_t}$ is not in $W^{1,2}_0(\Omega)$, notice that for all $t > 0$, by the maximum principle $E_t \cap \partial \Omega = \emptyset$, therefore from classical elliptic regularity $u$ is smooth over $E_t$. 

As a result, integration by parts using the finite Radon measure $D\mathbf{1}_{E_t}$ gives
\begin{equation}
    m(t) 
    = |E_t| 
    =  -\int_{E_t} \Delta u \dd x 
    =  \int_{\Omega} \nabla u \cdot \dd D \mathbf{1}_{E_t} 
    =  \int_{\partial^* E_t} |\nabla u| \dd \mathcal{H}^{n-1}
    = \beta(t),
\end{equation}
for every $t > 0$ such that $E_t$ has finite perimeter. Notice that in the $\BV$-integration by parts above we have used the outer unit normal $\nabla u/|\nabla u| = - \nu_{E_t}$. 

%

Finally, for the $L^\infty$ bound, the Cauchy--Schwarz and isoperimetric inequalities now yield
\begin{align*}
  \alpha(t)\beta(t)
  &
  \ge\Per(E_t)^2
  \ge c_n m(t)^{2-2/n}.
\end{align*}
Thus
\[
  -m'(t)=\alpha(t)\ge c_n m(t)^{1-2/n}>0
\]
whenever $m(t)>0$. Since the torsion function is positive almost everywhere
in $\Omega$, this proves continuity and strict decrease on
$(0,\|u\|_\infty)$. Finally, using the identities relating $\alpha, \beta$ and $m$ above we have
\[
  -\frac{\dd}{\dd t}m(t)^{2/n}\ge\frac{2}{n} c_n
\]
for almost every $t$. Integrating from $\varepsilon$ to
$\|u\|_\infty$ and then letting $\varepsilon\downarrow0$ gives
\[
  \|u\|_\infty\le\frac{n}{2}\frac{\omega_n^{2/n}}{c_n}=\frac{1}{2n}.
\]
\end{proof}

This shows the interest of defining the following deficits
\begin{align}\label{eq.deficits_DH_DI}
  D_H(t)\eqdef \alpha(t)\beta(t)-\Per(E_t)^2,
  \quad
  D_I(t)\eqdef \Per(E_t)^2-c_n\,|E_t|^{2-2/n}.
\end{align}
Notice that both are non-negative quantities; while $D_H(t) \ge 0$ thanks to the Cauchy-Schwarz inequality, $D_I(t) \ge 0$ for each $t$ such that $E_t$ is a set of finite perimeter, due to the classical isotropic isoperimetric inequality. As a result, $D_I(t)$ measures the distance of the level set $E_t$ from a ball. On the other hand, from the equality cases of the Cauchy-Schwarz inequality, the deficit $D_H(t)$ gives a measure of oscillations of the gradient $|\nabla u|$ over $\partial^*E_t$.

Our goal is to relate these quantities with the deficit of non-optimality of the Saint-Venant inequality, which is defined as
\begin{equation}\label{eq.deficit_SaintVenant}
  \delta_{\SV}(\Omega)
  \eqdef T(B_1) - T(\Omega),
\end{equation}
notice that the deficit assumes this form since we have normalized the volume of $\Omega$ to be $|\Omega| = \omega_n$. 
This relation gives a direct proof of the torsion inequality and a route to
quantitative stability through quantitative forms of the inequalities that
make $D_H$ and $D_I$ nonnegative.

\begin{theorem}\label{thm.SaintVenant_qualitative}
  Let $\Omega\subset\mathbb{R}^n$ be open with $|\Omega|=\omega_n$, and let $u$
be its torsion function. Then
\[
  \delta_{\SV}(\Omega)=\int_0^{\|u_\Omega\|_\infty}
  \frac{D_H(t)+D_I(t)}{c_n\,m(t)^{1-2/n}}\dd t .
\]
In particular, since $D_H(t)$ and $D_I(t)$ are nonnegative for a.e.~$t$, this identity implies the Saint-Venant inequality.
\end{theorem}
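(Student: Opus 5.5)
The plan is to turn the integrand into an exact derivative using Lemma~\ref{lemma.preliminary_alpha_beta}, and then integrate. Set $M\eqdef\|u\|_\infty$.

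\emph{Algebraic simplification.} For almost every $t\in(0,M)$, Lemma~\ref{lemma.preliminary_alpha_beta} gives $\alpha(t)=-m'(t)$ and $\beta(t)=m(t)$. The $\Per(E_t)^2$ terms cancel in the sum, so
\[
  D_H(t)+D_I(t)=\alpha(t)\beta(t)-c_n m(t)^{2-2/n}=-m'(t)\,m(t)-c_n m(t)^{2-2/n}.
\]
Dividing by $c_n m(t)^{1-2/n}$, which is positive on $(0,M)$ because $m$ is strictly decreasing, gives
\[
  \frac{D_H(t)+D_I(t)}{c_n m(t)^{1-2/n}}=-\frac{1}{c_n}\,m'(t)\,m(t)^{2/n}-m(t).
\]

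\emph{Integration of the two terms.} The second term is handled by the layer-cake formula: $\int_0^M m(t)\dd t=\int_\Omega u\dd x=T(\Omega)$. For the first term, $m$ is locally absolutely continuous and $s\mapsto s^{1+2/n}$ is $C^1$, so on each $[a,b]\subset(0,M)$ the chain rule gives
\[
  -\int_a^b m'(t)\,m(t)^{2/n}\dd t=\frac{n}{n+2}\bigl(m(a)^{\frac{n+2}{n}}-m(b)^{\frac{n+2}{n}}\bigr).
\]
Let $a\downarrow0$ and $b\uparrow M$. We have $m(a)\to|\{u>0\}|=|\Omega|=\omega_n$, since $u>0$ a.e.\ in $\Omega$. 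We have $m(b)\to|\{u\ge M\}|=0$, since $|\{\nabla u=0\}|=0$ and $\nabla u=0$ a.e.\ on the level set $\{u=M\}$. Hence the first term contributes
\[
  \frac{1}{c_n}\frac{n}{n+2}\,\omega_n^{\frac{n+2}{n}}=\frac{n\,\omega_n^{1+2/n}}{(n+2)\,n^2\omega_n^{2/n}}=\frac{\omega_n}{n(n+2)}=T(B_1),
\]
using Lemma~\ref{lemma.torsion_scaling}. Subtracting, the integral equals $T(B_1)-T(\Omega)=\delta_{\SV}(\Omega)$.

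\emph{Justifying the limits.} The main technical point is the passage $a\downarrow0$, $b\uparrow M$, because $m$ is only locally absolutely continuous. I would handle it as follows. First, integrate the identity on $[a,b]$, where every term is finite. Then use two facts:
\begin{itemize}
\item the left-hand integrand is nonnegative, since $D_H\ge0$ by Cauchy--Schwarz and $D_I\ge0$ by the isoperimetric inequality, so monotone convergence applies to it;
\item $\int_a^b m\dd t\to T(\Omega)<\infty$ by monotone convergence as well.
\end{itemize}
With the boundary values of $m$ computed above, all three terms converge. This yields the identity, possibly with both sides a priori in $[0,\infty]$, but the right-hand side is finite.

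The Saint--Venant inequality then follows immediately from the nonnegativity of the integrand.
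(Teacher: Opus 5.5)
Your proof is correct. It shares the paper's essential reduction: $\beta=m$ and $\alpha=-m'$ collapse the integrand to $\frac{m^{2/n}\alpha}{c_n}-m$. Where it differs is in how you evaluate the resulting integral.

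The paper passes to the distribution variable $s=m(t)$ through the decreasing rearrangement $u^\circ$. It introduces $G(s)=\int_0^s u^\circ\,\dd\sigma+\frac{s^{1+2/n}}{(4+2n)\omega_n^{2/n}}$ and uses the inverse-function formula $(u^\circ)'(s)=-1/\alpha(u^\circ(s))$ to recognize the integrand as $sG''(s)$. It then integrates by parts on $[0,\omega_n]$, reading off $T(\Omega)$ and $T(B_1)$ from the boundary values of $G$ and $G'$.

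You stay in the $t$ variable instead. The chain rule applied to the locally absolutely continuous function $m^{1+2/n}$ produces $T(B_1)$, and the layer-cake formula produces $T(\Omega)$. This avoids the rearrangement and the differentiability of $u^\circ$ entirely. Your endpoint argument ($u>0$ in $\Omega$, $|\{u=\|u\|_\infty\}|=0$, and monotone convergence for the nonnegative integrand) also makes explicit a limiting step the paper leaves implicit, since $m$ is only locally absolutely continuous.

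What the paper's formulation offers in exchange is mainly interpretive. Nonnegativity of the integrand is exactly convexity of $G$, and together with $G'(\omega_n^-)=\frac{1}{2n}$ this encodes Talenti's pointwise comparison of $u^\circ$ with the ball profile. For the identity itself, the two computations are equivalent.
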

\begin{proof}
  Let $u^\circ$ be the decreasing rearrangement of $u$, to recall is definition, first we define the one-dimensional profile
  \[
    u^\circ(s) 
    \eqdef 
    \inf 
    \left\{
        t \in \mathbb{R}: 
        |\{u > t\}| \le s
    \right\},
  \]
  and we set, with a slight abuse of notation, $u^\circ (x) \eqdef u^\circ(\omega_n|x|^n)$. Define the following quantities
\[
  I(s)\eqdef\int_0^s u^\circ(\sigma)\dd \sigma,\qquad
  G(s)\eqdef I(s)+\frac{s^{1+2/n}}{(4+2n)\,\omega_n^{2/n}} .
\]
Since $m$ is strictly decreasing and locally absolutely continuous,
$m(u^\circ(s))=s$ and the theorem on inverses of monotone absolutely
continuous functions gives, for almost every $s\in(0,\omega_n)$,
\[
  (u^\circ)'(s)=\frac{1}{m'(u^\circ(s))}=-\frac{1}{\alpha(u^\circ(s))}.
\]
Computing the derivatives of $G$, we have
\begin{align*}
  G'(s) &=I'(s)+\Bigl(1+\frac{2}{n}\Bigr)\frac{s^{2/n}}{(4+2n)\,\omega_n^{2/n}}
         = u^\circ(s)+ \frac{1}{2n \omega_n^{2/n}} s^{2/n}\\
  G''(s)&=-\frac{1}{\alpha(u^\circ(s))}+\frac{s^{2/n-1}}{c_n}.
\end{align*}

A simple integration, using the change of variables $s=m(t)$, gives
\begin{align*}
  \int_0^{\omega_n}s G''(s)\dd s
  &=\int_0^{\omega_n}\left(\frac{s^{2/n}}{c_n}-\frac{s}{\alpha(u^\circ(s))}\right)\dd s\\[4pt]
  &=\int_0^{\|u_\Omega\|_\infty}\left(\frac{m(t)^{2/n}}{c_n}
     -\frac{m(t)}{\alpha\bigl(\underbrace{u^\circ(m(t))}_{=t}\bigr)}\right)\alpha(t)\dd t\\[4pt]
  &=\int_0^{\|u_\Omega\|_\infty}\left(\frac{m(t)\,\alpha(t)}{c_n\,m(t)^{1-2/n}}-m(t)\right)\dd t\\[4pt]
 &=\int_0^{\|u_\Omega\|_\infty}\left(\left(\frac{\alpha(t)\beta(t)-\Per(E_t)^2}{c_n\,m(t)^{1-2/n}}\right)
     +\left(\frac{\Per(E_t)^2-c_n\,m(t)^{2-\frac{2}{n}}}{c_n\,m(t)^{1-2/n}}\right)\right)\dd t\\[4pt]
  &=\int_0^{\|u_\Omega\|_\infty}\frac{D_H(t)+D_I(t)}{c_n\,m(t)^{1-2/n}}\dd t .
\end{align*}
Integration by parts gives
\begin{align*}
  \int_0^{\omega_n}s\,G''(s)\dd s
  &= \frac{\omega_n}{2n} + G(0)-G(\omega_n).
\end{align*}
But $\dsty G(s)=I(s)+\frac{s^{1+2/n}}{(4+2n)\,\omega_n^{2/n}}$, and from the
preservation of mass property of decreasing rearrangements it holds that
$\dsty I(\omega_n)=\int_0^{\omega_n} u^\circ(\sigma)\dd \sigma = \int_\Omega u \dd x = T(\Omega)$. As a result, we have that
\begin{align*}
  \int_0^{\|u_\Omega\|_\infty}\frac{D_H(t)+D_I(t)}{c_n\,m(t)^{1-2/n}}\dd t
  &=\frac{\omega_n}{2n}-\left(I(\omega_n)+\frac{\omega_n}{2(2+n)}\right)
  =\frac{\omega_n}{n(n+2)} - T(\Omega)\\
  &=
  T(B_1) - T(\Omega)
  =
  \delta_{\SV}(\Omega),
\end{align*}
where we used $T(B_1) = \frac{\omega_n}{n(n+2)}$ from Lemma~\ref{lemma.torsion_scaling}. This finishes the proof.
\end{proof}

\subsection{A suboptimal quantitative result}\label{sec.subptimal_quantitative_SV}
Given Theorem ~\ref{thm.SaintVenant_qualitative}, a natural way of obtaining a quantitative stability result for the Saint-Venant inequality is to apply the sharp
quantitative isoperimetric inequality on each level set. For this inequality an effective constant is provided for instance by Figalli--Maggi--Pratelli~\cite{figalli2010mass}. Following this work, there exists a computable constant $\kappa_n>0$ such that, for every set of finite perimeter $E$ with
$|E|=\omega_n r_E^n$,
\begin{equation}\label{eq.quant_isoperimetric}
  \Per(E)-\Per(B_{r_E})
  \ge \kappa_n r_E^{n-1}\mathcal A(E)^2,
  \qquad
  \mathcal A(E)\eqdef
  \inf_{z\in\mathbb R^n}\frac{|E\Delta B_{r_E}(z)|}{|E|}.
\end{equation}
All constants below that use quantitative isoperimetry are expressed in terms
of this fixed $\kappa_n$.

However, this strategy, similarly to the argument from Fusco--Maggi--Pratelli~\cite{FuscoMaggiPratelli2009Eigenvalue}, results in a suboptimal stability exponent. As we will need it in the passage from the stability in a bounded setting to global, we give a short proof for the reader's convenience and for comparison with our argument in Section~\ref{sec.stability_SV_bounded}.

Inspired by the identity for $\delta_{\SV}(\Omega)$ from Theorem~\ref{thm.SaintVenant_qualitative}, we would like to avoid the factor $m(t)$ in the denominator. For this reason, we introduce the change of variables $t=t(\rho)$ defined as
\begin{equation}\label{eq.change_var_t_rho}
    E_\rho \eqdef \{u>t(\rho)\},
    \text{ where $t(\rho)$ is s.t. }
    |E_\rho|=|B_\rho|=\omega_n\rho^n.
\end{equation}
that gives the explicit expression $m(t(\rho))=|B_\rho|=\omega_n\rho^n$ at the cost of introducing the Jacobian
\begin{equation}\label{eq.change_var_jacobian}
    w(\rho)\eqdef -t'(\rho),\qquad \dd \mu(\rho)\eqdef w(\rho)\dd\rho .
\end{equation}
Since the map $\rho \mapsto t(\rho)$ is monotone, it is almost everywhere differentiable and hence the function $w$ well-defined. A useful identity coming from these definitions that will appear frequently is the following one
\begin{equation}\label{eq.w_alpha_P_identity}
    w(\rho)\,\alpha(t(\rho))=\Per(B_\rho),
\end{equation}
which is an easy consequence of the chain rule. Indeed, differentiating both sides of
$\omega_n \rho^n=m(t(\rho))$ gives
\begin{align*}
  \Per(B_\rho)
  =
  \frac{\dd}{\dd\rho}m(t(\rho))
  =m'(t(\rho)) t'(\rho)
  =\bigl(-\alpha(t(\rho))\bigr)\bigl(-w(\rho)\bigr)
  =w(\rho)\alpha(t(\rho)).
\end{align*}
We let $\nu_\rho$ denote the measure-theoretic outer unit normal to $E_\rho$
on its reduced boundary:
\[
  \nu_\rho\eqdef
  \nu_{E_\rho}=-\frac{\nabla u(x)}{|\nabla u|}\qquad\text{for }x\in\partial^* E_\rho.
\]

With this notation, we prove the following properties for this change of variables
\begin{proposition}\label{prop.various_estimates}
Let $t = t(\rho)$ and $w(\rho)$ be the change of variables and its Jacobian defined in~\eqref{eq.change_var_t_rho} and~\eqref{eq.change_var_jacobian} respectively. Then it holds that
\[
  0<w(\rho)\le\frac{\rho}{n}\quad\text{ a.e., and }
  \delta_{\SV}(\Omega)
  =
  \omega_n
  \int_0^1\rho^n\left[\frac{\rho}{n}-w(\rho)\right]\dd\rho.
\]
\end{proposition}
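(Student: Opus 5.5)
Both claims come from combining the identity \eqref{eq.w_alpha_P_identity} with the Cauchy--Schwarz and isoperimetric lower bound for $\alpha$ from Lemma~\ref{lemma.preliminary_alpha_beta}, followed by a change of variables in the torsion integral.

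\emph{Bound on $w$.} Since $m$ is strictly decreasing and locally absolutely continuous, the map $\rho\mapsto t(\rho)$ is well defined, continuous and decreasing on $(0,1)$, with $t(0^+)=\|u\|_\infty$ and $t(1^-)=0$. For a.e.\ $\rho$ we have $w(\rho)\,\alpha(t(\rho))=\Per(B_\rho)=n\omega_n\rho^{n-1}$. The proof of Lemma~\ref{lemma.preliminary_alpha_beta} gives
\[
\alpha(t)\ge \Per(E_t)^2/\beta(t)\ge c_n m(t)^{1-2/n}.
\]
At $t=t(\rho)$ this reads $\alpha\ge n^2\omega_n^{2/n}(\omega_n\rho^n)^{1-2/n}=n^2\omega_n\rho^{n-2}$, hence $w(\rho)\le \rho/n$. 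For positivity, $\alpha(t)<\infty$ for a.e.\ $t$ because $-m'=\alpha$ is integrable. I still need the exceptional $t$-null set to pull back to a $\rho$-null set. For this I would use the absolute continuity of $m$: the image under $m$ of a null set is null. So $w=\Per(B_\rho)/\alpha(t(\rho))>0$ a.e. This is also where I would confirm that the chain rule derivation of \eqref{eq.w_alpha_P_identity} holds a.e.\ in $\rho$, using the inverse-function theorem for monotone absolutely continuous maps exactly as in the proof of Theorem~\ref{thm.SaintVenant_qualitative}.

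\emph{Identity.} I would use the layer-cake formula $T(\Omega)=\int_0^{\|u\|_\infty} m(t)\,\dd t$ and substitute $t=t(\rho)$. This gives $\dd t=-w(\rho)\,\dd\rho$ and $m(t(\rho))=\omega_n\rho^n$, so
\[
T(\Omega)=\omega_n\int_0^1 \rho^n w(\rho)\,\dd\rho .
\]
On the other side, $T(B_1)=\omega_n/(n(n+2))=\omega_n\int_0^1\rho^{n+1}/n\,\dd\rho$, and subtracting the two expressions yields the claimed formula.

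\emph{Main obstacle.} The delicate step is justifying the change of variables $t=t(\rho)$ when $t(\cdot)$ need not be absolutely continuous. The map $\rho\mapsto t(\rho)$ is monotone, so I would only use the one-sided inequality $\int \rho^n w\,\dd\rho\le \int m\,\dd t$, which holds for monotone functions because the singular part has a sign. That gives $\delta_{\SV}\ge\omega_n\int\rho^n(\rho/n-w)$. For the reverse inequality I would argue as follows. The inverse $t=u^\circ(\omega_n\rho^n)$ is locally absolutely continuous wherever $\alpha<\infty$ and $\alpha>0$. Since $\alpha\ge c_n m^{1-2/n}$, the derivative of $t(\cdot)$ is locally bounded on compact subsets of $(0,1)$, so $t(\cdot)$ is locally Lipschitz there. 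The substitution is therefore valid on every $[\varepsilon,1-\varepsilon]$, and I conclude by letting $\varepsilon\to0$ with monotone convergence.
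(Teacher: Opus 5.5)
Your proof is correct. The bound $0<w\le\rho/n$ is obtained exactly as in the paper. The paper phrases it as nonnegativity of $D_H+D_I=\omega_n\rho^n\bigl(\alpha(t(\rho))-n^2\omega_n\rho^{n-2}\bigr)$ combined with \eqref{eq.w_alpha_P_identity}, which is your lower bound $\alpha\ge c_n m^{1-2/n}$. Positivity likewise comes from $\alpha<\infty$ a.e.

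The identity, however, you reach by a different and more direct route. The paper substitutes $t=t(\rho)$ into the exact deficit formula of Theorem~\ref{thm.SaintVenant_qualitative}, obtaining $\delta_{\SV}(\Omega)=\int_0^1 (D_H+D_I)\,(n^2\omega_n\rho^{n-2})^{-1}\,\dd\mu(\rho)$. It then simplifies using $\beta=m$ and $w\alpha=\Per(B_\rho)$. You instead substitute into the layer-cake formula $T(\Omega)=\int_0^{\|u\|_\infty}m(t)\,\dd t$ and subtract the result from $T(B_1)=\omega_n\int_0^1\rho^{n+1}/n\,\dd\rho$.

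Your version is self-contained: it needs neither the rearrangement and integration-by-parts argument behind Theorem~\ref{thm.SaintVenant_qualitative} nor the deficits $D_H,D_I$. The paper's version produces along the way the weighted formula for $\delta_{\SV}$ in terms of $D_H+D_I$ and $\dd\mu$, which is reused in Theorems~\ref{thm:suboptimal_stabilitySV} and~\ref{thm:boundedSV}. Both routes need the substitution $t=t(\rho)$ to be legitimate, i.e.\ $t(\cdot)$ must have no singular part. The paper performs it without comment, while you address it explicitly.

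Two small corrections to that last step.

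First, there is a sign slip. The one-sided inequality $\omega_n\int_0^1\rho^n w\,\dd\rho\le T(\Omega)$ holds for any continuous monotone substitution, but it gives $\delta_{\SV}(\Omega)\le\omega_n\int_0^1\rho^n(\rho/n-w)\,\dd\rho$, not $\ge$. The direction used later in the paper is the reverse: bounding $\int_{\rho_A}^1\rho^n(\rho/n-w)\,\dd\rho$ and $|\{w<\tau_0\}|$ by $\delta_{\SV}(\Omega)$. That direction is exactly the one requiring absence of a singular part, so your Lipschitz step is essential, not a refinement.

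Second, the inference ``the derivative of $t(\cdot)$ is bounded, hence $t$ is Lipschitz'' fails for general monotone functions (think of the Cantor function). The clean justification runs through $\rho(t)=(m(t)/\omega_n)^{1/n}$. This function is locally absolutely continuous, with $\rho'(t)=-\alpha(t)/\Per(B_{\rho(t)})\le-n/\rho(t)\le-n$ a.e. Integrating gives $\rho(t_1)-\rho(t_2)\ge n(t_2-t_1)$ for $t_1<t_2$. Hence $t(\cdot)$ is $\tfrac1n$-Lipschitz on all of $(0,1)$.

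Alternatively, you can cite the theorem that the inverse of a strictly monotone absolutely continuous function is absolutely continuous iff its derivative vanishes only on a null set. That, rather than absolute continuity ``wherever $\alpha>0$'', is the statement to invoke. With either fix the substitution holds on $(0,1)$, and the limit $\varepsilon\to0$ is just monotone convergence.
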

\begin{proof}
  Using the change of variables $t = t(\rho)$ defined in~\eqref{eq.change_var_jacobian} above, the identity for the torsion deficit from Theorem~\ref{thm.SaintVenant_qualitative} becomes
\begin{align*}
  \delta_{\SV}(\Omega)
  &=\int_0^{\|u\|_\infty}\frac{D_H(t)+D_I(t)}{c_n\,m(t)^{1-2/n}}\dd t
  =\int_0^1\frac{D_H(t(\rho))+D_I(t(\rho))}{c_n m(t(\rho))^{1-2/n}}
     \bigl(-t'(\rho)\bigr)\dd \rho \\[4pt]
  &=\int_0^1\frac{D_H(t(\rho))+D_I(t(\rho))}{n^2\,\omega_n\,\rho^{n-2}}\dd \mu(\rho),
\end{align*}
On the other hand, from the definition of $D_H$ and $D_I$ we recall that
\[
  0 \le D_H(t)+D_I(t)=\alpha(t)\beta(t)-c_n\,m(t)^{2-\frac{2}{n}},
\]
evaluated at $t=t(\rho)$ we have
\begin{align*}
  D_H(t(\rho))+D_I(t(\rho))
  &=\omega_n\rho^n\,\alpha(t(\rho))-n^2\omega_n^{2/n}\bigl(\omega_n\rho^n\bigr)^{2-\frac{2}{n}}\\[4pt]
  &=\omega_n\rho^n\Bigl(\alpha(t(\rho))-n^2\omega_n\rho^{n-2}\Bigr).
\end{align*}
In addition, at almost every radius under consideration, $\alpha(t(\rho))<\infty$ and
$\Per(B_\rho)>0$; hence \eqref{eq.w_alpha_P_identity} gives $w(\rho)>0$.

Now using that $w(\rho)\,\alpha(t(\rho))=\Per(B_\rho)$, we get
\begin{align*}
  0&\le\omega_n\rho^n\left(\frac{n\,\omega_n\,\rho^{n-1}}{w(\rho)}-n^2\omega_n\rho^{n-2}\right)
   =n\,\omega_n^2\,\rho^{2n-2}\left(\frac{\rho}{w(\rho)}-n\right)
\end{align*}
as a result we get $0<w(\rho)\le\dfrac{\rho}{n}$ for almost every $\rho \in (0,1)$.
Using this in the formula for $\delta_{\SV}(\Omega)$:
\begin{align*}
  \delta_{\SV}(\Omega)
  &=\int_0^1\frac{\omega_n\rho^n\bigl(\alpha(t(\rho))-n^2\omega_n\rho^{n-2}\bigr)w(\rho)}{n^2\,\omega_n\,\rho^{n-2}} \dd \rho\\[4pt]
  &=\int_0^1\omega_n\rho^n\left[\frac{\overbrace{w(\rho)\,\alpha(t(\rho))}^{=\Per(B_\rho)=n\omega_n\rho^{n-1}}
     -\,n^2\omega_n\rho^{n-2}\,w(\rho)}{n^2\,\omega_n\,\rho^{n-2}}\right]\dd\rho\\[4pt]
  &=\int_0^1\omega_n\rho^n\left[\frac{n\,\omega_n\,\rho^{n-1}-n^2\omega_n\rho^{n-2}w(\rho)}{n^2\,\omega_n\,\rho^{n-2}}\right]\dd\rho\\[4pt]
  &=\int_0^1\omega_n\rho^n\left[\frac{\rho}{n}-w(\rho)\right]\dd\rho.
\end{align*}
\end{proof}
This machinery greatly helps in proving a provisional stability result and will be useful later in the proof of the version with sharp exponent.

\begin{theorem}[Suboptimal stability]\label{thm:suboptimal_stabilitySV}
There is a computable constant $C_3(n)>0$ such that every open set
$\Omega\subset\mathbb R^n$ with $|\Omega|=\omega_n$ satisfies
\begin{equation}
   \mathcal{A}(\Omega)^3\le C_3(n)\delta_{\SV}(\Omega).
\end{equation}
\end{theorem}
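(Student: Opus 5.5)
We will prove Theorem~\ref{thm:suboptimal_stabilitySV} by combining the identity of Proposition~\ref{prop.various_estimates} with the quantitative isoperimetric inequality \eqref{eq.quant_isoperimetric} applied to the level sets $E_\rho$, and then selecting a single level set close to the boundary by a Chebyshev-type argument.

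\textbf{Step 1: deficit controls the level-set asymmetries.} From Theorem~\ref{thm.SaintVenant_qualitative} we have $\delta_{\SV}(\Omega)\ge\int_0^{\|u\|_\infty} D_I(t)\,(c_n m(t)^{1-2/n})^{-1}\dd t$. Write $D_I(t)=(\Per(E_t)-\Per(B_{\rho}))(\Per(E_t)+\Per(B_\rho))$, where $m(t)=\omega_n\rho^n$. Then \eqref{eq.quant_isoperimetric} gives $D_I\ge 2\kappa_n n\omega_n\rho^{2n-2}\mathcal A(E_\rho)^2$. In the $\rho$ variable the measure is $\dd\mu=w\,\dd\rho$, so we obtain
\[
  \int_0^1 \rho^{n}\mathcal A(E_\rho)^2\,w(\rho)\dd\rho\le C(n)\delta_{\SV}(\Omega).
\]

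\textbf{Step 2: the weight $w$ is close to $\rho/n$.} By Proposition~\ref{prop.various_estimates} we have $0<w\le\rho/n$ and $\int_0^1\rho^n(\rho/n-w)\dd\rho=\delta_{\SV}/\omega_n$. Hence, on an interval $(1-\eta,1)$, Chebyshev's inequality shows that the set where $w\le\rho/(2n)$ has measure at most $C\delta_{\SV}$. Call $\delta=\delta_{\SV}(\Omega)$ and choose $\eta$ to be a constant multiple of $\mathcal A(\Omega)$, which we may assume small since otherwise $\delta$ is bounded below: if $\delta\ge c$, the theorem is trivial because $\mathcal A\le2$. If $\delta\ll\eta$, then on a subset $J\subset(1-\eta,1)$ of measure at least $\eta/2$ we have $w\ge\rho/(2n)$. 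Combining with Step 1 and averaging over $J$, we find some $\rho\in J$ with
\[
  \mathcal A(E_\rho)^2\le C\delta/\eta.
\]
If instead $\delta\gtrsim\eta\sim\mathcal A(\Omega)$, we get $\mathcal A^3\lesssim\mathcal A\lesssim\delta$ directly.

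\textbf{Step 3: passing from $E_\rho$ to $\Omega$.} Since $E_\rho\subset\Omega$ and $|\Omega\setminus E_\rho|=\omega_n(1-\rho^n)\le n\omega_n\eta$, the triangle inequality, together with $|B_\rho(z)\Delta B_1(z)|\le C\eta$, gives
\[
  \mathcal A(\Omega)\le C\bigl(\mathcal A(E_\rho)+\eta\bigr)\le C\bigl(\sqrt{\delta/\eta}+\eta\bigr).
\]
Choosing $\eta=c\mathcal A(\Omega)$ with $c$ small (computable), the $\eta$ term is absorbed and we get $\mathcal A^2\lesssim\delta/\mathcal A$, that is, $\mathcal A^3\le C_3\delta$.

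The main obstacle is the bookkeeping in Step 2. We must ensure that the Chebyshev selection on $(1-\eta,1)$ is compatible with the lower bound on $w$, which is what converts the $\mu$-weighted estimate into a Lebesgue one. We must also ensure that all thresholds, namely the smallness of $\delta$ versus $\eta$ and the choice of $c$, come from explicit arithmetic on $\kappa_n$, $n$ and $\omega_n$. This is where the loss of one power of $\mathcal A$ occurs. Everything else consists of routine scaling identities.
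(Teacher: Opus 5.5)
Your argument is correct and is essentially the paper's proof. It uses the same level-set bound from quantitative isoperimetry, the identity of Proposition~\ref{prop.various_estimates} to control where $w$ is small, a collar of width comparable to $\mathcal A(\Omega)$, and the triangle inequality. The paper runs these in the opposite order: it first uses the triangle inequality to get $\mathcal A(E_\rho)\ge\mathcal A(\Omega)/2$ on the whole collar $[\rho_A,1]$. It then bounds $\int_{\rho_A}^1\rho^n w\,\dd\rho\ge c\,\mathcal A(\Omega)-C\delta_{\SV}(\Omega)$ and absorbs, which avoids both your Chebyshev selection and your case split. One small point: you do not need to assume $\mathcal A(\Omega)$ small, and the reason you give ("otherwise $\delta$ is bounded below") would presuppose a stability estimate. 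The assumption is unnecessary because $\eta=c\,\mathcal A(\Omega)<2c$ is automatically small once $c$ is.
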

\begin{proof}
If $\mathcal A(\Omega)=0$, there is nothing to prove. Assume henceforth that
$\mathcal A(\Omega)>0$. Using the isoperimetric inequality~\eqref{eq.quant_isoperimetric}, we obtain for almost every $\rho$ that
\begin{equation}\label{eq:DI-lower}
 D_I(t(\rho))
 \ge c(n)\rho^{2n-2}\mathcal{A}(E_\rho)^2.
\end{equation}
Indeed we can rewrite the deficit as
  \[
  D_I(t(\rho))
 =\bigl(\Per(E_\rho)-\Per(B_\rho)\bigr)
  \bigl(\Per(E_\rho)+\Per(B_\rho)\bigr),
  \]
apply the quantitative isoperimetric inequality~\eqref{eq.quant_isoperimetric} to the first factor and bound the sum of perimeters from below by $2n\omega_n^{1/n}\rho^{n-1}$, since once again the isoperimetric inequality gives $\Per(E_\rho) \ge \Per(B_\rho)$. 

Let $\rho_A=(1-\mathcal{A}(\Omega)/4)^{1/n}$, using the triangular inequality for any
$\rho\in[\rho_A,1]$ yields that
\[
 \mathcal{A}(\Omega)\le \mathcal{A}(E_\rho)+2(1-\rho^n)
   \le \mathcal{A}(E_\rho)+2(1-\rho_A^n)
   =\mathcal{A}(E_\rho)+\frac{\mathcal{A}(\Omega)}{2}.
\]
Consequently, $\mathcal{A}(E_\rho)\ge \mathcal{A}(\Omega)/2$ throughout this interval.

Using \eqref{eq:DI-lower}, and bounding $D_H(t(\rho)) \ge 0$ in the exact identity for $\delta_{\rm SV}$ from Theorem~\ref{thm.SaintVenant_qualitative} gives
\begin{equation}\label{eq:seed-collar-short}
 \delta_{\SV}(\Omega)\ge c(n)\mathcal{A}(\Omega)^2
 \int_{\rho_A}^1\rho^n w (\rho)\dd \rho .
\end{equation}
Next, adding and subtracting
the ball profile $\rho/n$, we obtain
\begin{align*}
 \int_{\rho_A}^1\rho^n w (\rho)\dd \rho
 &=\int_{\rho_A}^1\frac{\rho^{n+1}}{n}\dd \rho
   -\int_{\rho_A}^1\rho^n
      \left(\frac{\rho}{n}- w (\rho)\right)\dd \rho\\
 &=\frac{1-\rho_A^{n+2}}{n(n+2)}
   -\int_{\rho_A}^1\rho^n
      \left(\frac{\rho}{n}- w (\rho)\right)\dd\rho.
\end{align*}
The integrand in the last term is nonnegative thanks to Proposition~\ref{prop.various_estimates}, which also implies that
\[
 \int_{\rho_A}^1\rho^n
      \left(\frac{\rho}{n}- w (\rho)\right)\dd\rho
 \le \int_0^1\rho^n
      \left(\frac{\rho}{n}- w (\rho)\right)\dd\rho
 =\frac{1}{\omega_n}\delta_{\SV}(\Omega).
\]
Moreover, since $\rho_A < 1$
\[
 1-\rho_A^{n+2}
 \ge
 1-\rho_A^{n}
 =\frac{\mathcal{A}(\Omega)}{4}.
\]
Hence
\begin{equation}\label{eq:seed-collar-lower}
 \int_{\rho_A}^1\rho^n w (\rho)\dd \rho
 \ge c(n)\mathcal{A}(\Omega)-C(n)\delta_{\SV}(\Omega).
\end{equation}
Combining \eqref{eq:seed-collar-short} and
\eqref{eq:seed-collar-lower}, we find
\[
 \delta_{\SV}(\Omega)
 \ge c(n)\mathcal{A}(\Omega)^3-C(n)\mathcal{A}(\Omega)^2\delta_{\SV}(\Omega),
\]
or equivalently
\[
 \bigl(1+C(n)\mathcal{A}(\Omega)^2\bigr)\delta_{\SV}(\Omega)\ge c(n)\mathcal{A}(\Omega)^3.
\]
Since every Fraenkel asymmetry satisfies $0\le \mathcal{A}(\Omega)<2$, the factor on
the left is bounded above by a dimensional constant. Thus
$\mathcal{A}(\Omega)^3\le C_3(n)\delta_{\SV}(\Omega)$. The constant $C_3(n)$ is computable because it is obtained from $n$, $\omega_n$, and the fixed computable constant $\kappa_n$ by finitely many
arithmetic operations.
\end{proof}

\section{Sharp quantitative Saint-Venant inequality in a bounded domain}\label{sec.stability_SV_bounded}

  The goal of this section is to first prove quantitative stability on a bounded domain, therefore we fix $R>2$ and an open set $\Omega\subset B_R(0)$ with $|\Omega|=\omega_n$. Our goal is to relate $D_H(t(\rho))$ and $D_I(t(\rho))$ to the derivative of the distance between $E_\rho$ and the family of balls of radius $\rho$. Define

 \begin{equation}\label{eq.moving_ball_distance}
  q(\rho)\eqdef\inf_{z\in\mathbb{R}^n}\bigl|E_\rho\,\Delta\,B_\rho(z)\bigr|,
\end{equation}
which we also call the \textit{moving ball distance}. A few easy properties follow directly from the definition:

\begin{lemma}\label{lemma.moving_ball_properties}
  The following properties of the function $(0,1) \ni \rho \mapsto q(\rho)$ hold
  \begin{enumerate}
    \item For all $0\le\rho\le 1$ we have $q(\rho)\le 2\omega_n$;
    \item The infimum in the definition is attained.
    \item The mapping $\rho \mapsto q(\rho)$ is $2n\omega_n$--Lipschitz.
  \end{enumerate}
\end{lemma}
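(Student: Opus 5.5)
The three properties are elementary and I will prove them in order, using only that $|E_\rho|=\omega_n\rho^n$ and that the sets $E_\rho$ are nested and decreasing in $\rho$ (since $t(\rho)$ is decreasing). For (1), I take any $z$ and use $|E_\rho\Delta B_\rho(z)|\le |E_\rho|+|B_\rho(z)|=2\omega_n\rho^n\le 2\omega_n$.

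For (2), set $f(z)\eqdef|E_\rho\Delta B_\rho(z)|=|E_\rho|+|B_\rho|-2|E_\rho\cap B_\rho(z)|$, so minimizing $f$ amounts to maximizing $g(z)\eqdef|E_\rho\cap B_\rho(z)|=(\mathds 1_{E_\rho}*\mathds 1_{B_\rho})(z)$. Since both indicators lie in $L^2$, $g$ is continuous, by continuity of translations in $L^1$. Moreover $g(z)\to0$ as $|z|\to\infty$, because $\mathds 1_{E_\rho}\in L^1$ and $\mathds 1_{B_\rho}\in L^\infty$ has bounded support. In the present bounded setting this is even trivial: $E_\rho\subset\Omega\subset B_R$, so $g(z)=0$ as soon as $|z|>R+1$. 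If $g\equiv0$, every $z$ is a minimizer. Otherwise $\sup g>0$, and the supremum is attained on a compact ball by continuity. Either way the infimum defining $q(\rho)$ is attained.

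For (3), I fix $\rho<\rho'$ in $[0,1]$ and let $z$ be optimal for $\rho$. The triangle inequality for the symmetric-difference pseudometric gives
\[
q(\rho')\le|E_{\rho'}\Delta B_{\rho'}(z)|\le |E_{\rho'}\Delta E_\rho|+|E_\rho\Delta B_\rho(z)|+|B_\rho(z)\Delta B_{\rho'}(z)|.
\]
Because the sets are nested, $|E_{\rho'}\Delta E_\rho|=\omega_n(\rho'^n-\rho^n)$, and likewise $|B_\rho(z)\Delta B_{\rho'}(z)|=\omega_n(\rho'^n-\rho^n)$. Hence $q(\rho')-q(\rho)\le 2\omega_n(\rho'^n-\rho^n)\le 2n\omega_n(\rho'-\rho)$, by the mean value theorem and $\rho'\le1$. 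Running the same argument with the roles of $\rho$ and $\rho'$ exchanged (taking $z$ optimal for $\rho'$) gives the reverse inequality, so $q$ is $2n\omega_n$-Lipschitz.

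The only mildly delicate step is the attainment in (2). There I must make sure the maximizing sequence cannot escape to infinity, which is handled either by the decay of the convolution or directly by the confinement $\Omega\subset B_R$. Everything else is a direct computation.
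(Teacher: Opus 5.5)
Your proposal is correct and follows essentially the same route as the paper: the volume bound for (1), restricting centers to a compact ball and using continuity of $z\mapsto|E_\rho\cap B_\rho(z)|$ for (2), and for (3) the three-term triangle inequality with nested level sets and concentric balls, followed by exchanging the roles of the radii. One small slip: since $t(\rho)$ is decreasing, the sets $E_\rho$ \emph{increase} with $\rho$; your argument only uses that they are nested, so nothing is affected.
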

\begin{proof}
  Item (1) follows directly from the fact that both sets have volume at most $\omega_n$. For item (2), notice first that for any center $z$ such that
  \[
    |E_\rho \Delta B_\rho(z)| \ge 2 \omega_n\rho^n, 
  \]
  we must have that $E_\rho$ and $B_\rho(z)$ are disjoint. Hence taking any center such that $E_\rho \cap B_\rho(z) \neq \emptyset$ will reduce this value. As a result, we may consider only $z \in \overline{B_{R + \rho}}$. Compactness and continuity with respect to the center give a minimizer.

  For item (3), we need only a simple argument using the triangle inequality. Consider $0 < \sigma < \rho < 1$ and let $z_i \in \mathbb{R}^n$ be optimal in the definition of $q(i)$ for $i \in \{\sigma, \rho\}$.
  Then, using the elementary identity that $|A \Delta B| = |B| - |A|$ whenever $A \subset B$, we have that
  \begin{align*}
    q(\rho) \le
    |E_\rho \Delta B_\rho(z_\sigma)|
    &\le
    |E_\rho \Delta E_\sigma|
    +
    |E_\sigma \Delta B_\sigma(z_\sigma)|
    +
    |B_\sigma(z_\sigma) \Delta B_\rho(z_\sigma)| \\
    &=
    q(\sigma) + 2 \omega_n(\rho^n - \sigma^n)
    \le
     q(\sigma) + 2 n \omega_n(\rho - \sigma).
  \end{align*}
  Changing the roles of $\rho$ and $\sigma$, the result follows.
\end{proof}

This last property implies of course that $\rho \mapsto q(\rho)$ is differentiable almost everywhere, and our goal for now is to estimate its derivatives. In these estimates the following quantities will naturally appear
\begin{equation}\label{eq.velocity_normals}
    V_\rho\eqdef\frac{w(\rho)}{|\nabla u|},\qquad
    H_{z,\rho}(x)\eqdef\frac{(x-z)\cdot\nu_\rho(x)}{\rho},\qquad
    \overline{V}_\rho\eqdef\fint_{\partial^*E_\rho}V_\rho\ =\ \frac{\Per(B_\rho)}{\Per(E_\rho)},
\end{equation}
where the last equation is due to
\[
  \int_{\partial^*E_\rho}V_\rho\dHn=w(\rho)\int_{\partial^*E_\rho}\frac{1}{|\nabla u|}\dHn
  =w(\rho)\,\alpha(t(\rho))=\Per(B_\rho).
\]

With this notation, we can estimate the derivative of the moving ball distance from above.
\begin{lemma}\label{lemma.estimate_moving_ball_derivative}
  Assume that $\Omega$ is an open set with $\partial \Omega$ smooth.
  For each radius $\rho \in (0,1)$, let $z_\rho$ be a minimizer in the definition of $q(\rho)$. For almost every radius $\rho$ we have the following bound 
  \begin{equation}\label{eq.estimate_dev_q}
    q'(\rho) \le \frac{n}{\rho}\,q(\rho)
    +\int_{\partial^*E_\rho}\bigl|V_\rho-H_{z_\rho,\rho}\bigr|\dHn.
  \end{equation}
\end{lemma}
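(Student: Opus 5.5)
The plan is to freeze the center and compare forward difference quotients. Fix a radius $\rho$ at which $q$ is differentiable and at which the level-set identities of Lemma~\ref{lemma.preliminary_alpha_beta} and \eqref{eq.w_alpha_P_identity} hold. Let $z=z_\rho$. For $h>0$ small, minimality of $q(\rho+h)$ gives
\[
q(\rho+h)-q(\rho)\le |E_{\rho+h}\Delta B_{\rho+h}(z)|-|E_\rho\Delta B_\rho(z)|.
\]
It therefore suffices to bound the upper right derivative of $\phi(\sigma)\eqdef|E_\sigma\Delta B_\sigma(z)|$ at $\sigma=\rho$, with $z$ held fixed. Since $q$ is differentiable at $\rho$, this bounds $q'(\rho)$.

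Write $\phi(\sigma)=|E_\sigma|+|B_\sigma|-2|E_\sigma\cap B_\sigma(z)|$. The first two terms have derivative $2\Per(B_\rho)$. The intersection term is the delicate one. As $\sigma$ increases, $E_\sigma$ shrinks through the thin layer $\{t(\sigma)<u\le t(\rho)\}$, while the ball grows. The layer contribution is computed by coarea:
\[
|(E_\rho\setminus E_{\rho+h})\cap B_\rho(z)|=\int_{t(\rho+h)}^{t(\rho)}\int_{\partial^*E_t\cap B_\rho(z)}\frac{1}{|\nabla u|}\dHn\dd t .
\]
Dividing by $h$ and using $w=-t'$ gives, in the limit, $\int_{\partial^*E_\rho\cap B_\rho(z)}V_\rho\dHn$. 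This is where the smoothness of $\partial\Omega$, hence regularity of $u$ up to the level sets, is used to justify the Lebesgue-point type passage. The growth of the ball contributes $\mathcal H^{n-1}(E_\rho\cap\partial B_\rho(z))$. Collecting terms,
\[
\phi'(\rho)\le 2\Per(B_\rho)-2\mathcal H^{n-1}(E_\rho\cap\partial B_\rho(z))-2\int_{\partial^*E_\rho\setminus B_\rho(z)}V_\rho+2\int_{\partial^*E_\rho} V_\rho-2\Per(B_\rho)\cdot 0,
\]
after using $\int_{\partial^*E_\rho}V_\rho=\Per(B_\rho)$. I would simplify this honestly into the form
\[
\phi'(\rho)=\int_{\partial^*E_\rho}V_\rho(\mathbf 1_{B^c}-\mathbf 1_{B})+\mathcal H^{n-1}(\partial B_\rho(z)\setminus E_\rho)-\mathcal H^{n-1}(\partial B_\rho(z)\cap E_\rho),
\]
where $B=B_\rho(z)$.

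The key step is to rewrite the sphere terms via the divergence theorem with the field $X(x)=(x-z)/\rho$. Its divergence is $n/\rho$, and $X\cdot\nu=1$ on $\partial B_\rho(z)$. Applying the Gauss--Green formula for finite perimeter sets to $E_\rho\setminus B$ and to $E_\rho\cap B$ yields
\[
\mathcal H^{n-1}(\partial B\cap E_\rho)=\int_{\partial^*E_\rho\setminus B}H_{z,\rho}\dHn-\frac n\rho|E_\rho\setminus B|.
\]
Similarly, applying it to $B\setminus E_\rho$ gives
\[
\mathcal H^{n-1}(\partial B\setminus E_\rho)=\frac n\rho|B\setminus E_\rho|+\int_{\partial^*E_\rho\cap B}H_{z,\rho}\dHn .
\]
The surface integrals come with the sign convention that $\nu_\rho$ is the outer normal of $E_\rho$. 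Substituting, the volume terms add to $\frac n\rho|E_\rho\Delta B|=\frac n\rho q(\rho)$. The surface terms become
\[
\int_{\partial^*E_\rho\setminus B}(V_\rho-H_{z,\rho})+\int_{\partial^*E_\rho\cap B}(H_{z,\rho}-V_\rho)\le\int_{\partial^*E_\rho}|V_\rho-H_{z,\rho}|\dHn,
\]
which is \eqref{eq.estimate_dev_q}.

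I expect the main obstacle to be the rigorous one-sided differentiation of the intersection term $|E_\sigma\cap B_\sigma(z)|$. Two issues arise. First, the portion of $\partial^*E_\rho$ lying on $\partial B_\rho(z)$ must be negligible, or handled by an inequality. Second, the coarea limit defining $V_\rho$ must hold at a.e.\ $\rho$. I would handle both by the following route. Apply the same Gauss--Green computation at every $\sigma$, and differentiate the resulting integral representations, which are Lipschitz in $\sigma$, via the coarea formula. Then discard the null set of radii where $\mathcal H^{n-1}(\partial^*E_\rho\cap\partial B_\rho(z))>0$ fails, using only inequalities in the direction needed.
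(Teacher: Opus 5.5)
Your route differs from the paper's. The paper never differentiates $|E_\sigma\cap B_\sigma(z)|$. Instead it compares $E_{\rho+h}$ with the homothetic image $T_h(E_\rho)$ about $z_\rho$, so that $q(\rho+h)\le|E_{\rho+h}\Delta T_h(E_\rho)|+(1+h/\rho)^nq(\rho)$. It then traps $E_{\rho+h}\Delta T_h(E_\rho)$ in a slab $\{h\xi_\rho<u-t(\rho)<h\zeta_\rho\}$ of width $\zeta_\rho-\xi_\rho=|w(\rho)+\nabla u\cdot(x-z_\rho)/\rho|$. That slab is evaluated with the variable-width coarea formula, using a countable rational family of widths so the exceptional radii do not depend on $\rho$. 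You instead differentiate $\phi$ directly and use Gauss--Green with $X=(x-z)/\rho$ to convert the sphere terms into $\frac n\rho|E_\rho\Delta B_\rho(z)|$ plus fluxes of $H_{z,\rho}$. That bookkeeping is correct, and even yields the signed quantity $\int_{\partial^*E_\rho\setminus B}(V_\rho-H_{z,\rho})\dHn+\int_{\partial^*E_\rho\cap B}(H_{z,\rho}-V_\rho)\dHn$, \emph{provided} $\mathcal H^{n-1}(\partial^*E_\rho\cap\partial B_\rho(z_\rho))=0$. There are two minor slips: $E_\sigma$ increases with $\sigma$, so the layer is $E_{\rho+h}\setminus E_\rho$; and your first collected display has the wrong sign in front of $\int_{\partial^*E_\rho\cap B}V_\rho$. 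Your ``honest'' form is the correct upper bound.

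The gap is that proviso. Write $\Gamma=\partial^*E_\rho\cap\partial B_\rho(z_\rho)$. You propose to discard the radii where $\Gamma$ has positive $\mathcal H^{n-1}$-measure as a null set, but this set need not be null. If $\Omega=B_{r_1}(0)\cup B_{r_2}(c)$ with $r_1^n+r_2^n=1$, $r_2$ small and $|c|$ large, then $E_\rho=B_\rho(0)$ and $z_\rho=0$ for every $\rho<(r_1^2-r_2^2)^{1/2}$.

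One-sided inequalities do not rescue this. Let $\Gamma^+=\{x\in\Gamma:\ \nu_\rho(x)=(x-z_\rho)/\rho\}$. On $\Gamma^+$ the cross term $|(E_{\rho+h}\setminus E_\rho)\cap(B_{\rho+h}(z_\rho)\setminus B_\rho(z_\rho))|$, which your decomposition of the growth of $|E_\sigma\cap B_\sigma(z)|$ drops, is of order $h$; to first order it equals $h\int_{\Gamma^+}\min\{V_\rho,1\}\dHn$. Your two Gauss--Green identities also fail there, each being off by $\mathcal H^{n-1}(\Gamma^+)$. Done correctly but without the cross term, your argument bounds the contribution of $\Gamma^+$ only by $\int_{\Gamma^+}(V_\rho+1)\dHn$. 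The lemma requires $\int_{\Gamma^+}|V_\rho-H_{z_\rho,\rho}|\dHn=\int_{\Gamma^+}|V_\rho-1|\dHn$; in the example this is $2\Per(B_\rho)$ versus $0$. On $\Gamma^-$, where $H_{z_\rho,\rho}=-1$, the crude bound $V_\rho+1$ is exactly what is needed.

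To close the argument you have two options:
\begin{enumerate}
\item Compute the cross term at regular levels. This turns $V_\rho+1$ into $V_\rho+1-2\min\{V_\rho,1\}=|V_\rho-1|$.
\item Dispose of $q(\rho)=0$, where $q'(\rho)=0$ trivially since $q\ge0$. Then prove separately that the bad radii are null when $q(\rho)>0$, which needs a further argument.
\end{enumerate}

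Relatedly, the limit of $\frac1h|(E_{\rho+h}\setminus E_\rho)\cap B_\rho(z_\rho)|$ involves a domain that moves with $\rho$. So ``Lebesgue point for a.e.\ level'' is not enough as stated; you need a bound valid for all open sets at once, e.g.\ at regular values of $u$, or the paper's countable family. The paper's homothety comparison avoids the overlap issue altogether, since it never splits $\partial^*E_\rho$ according to its position relative to $B_\rho(z_\rho)$.
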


In the proof of this result, we use the following variable-width coarea formula.
\begin{lemma}\label{lemma.variable_width_coarea}
Let $\Omega$ be an open set with smooth boundary and $\xi \le \zeta$ be continuous functions over $\overline{\Omega}$. Then, for almost every $s \in (0, \norm{u}_\infty)$ we have that 
\[
    \lim_{h \to 0^+} 
    \frac{1}{h}
    \left|
        \left\{
            h \xi < u - s < h \zeta
        \right\}
    \right|
    = 
    \int_{\partial^*\{u > s\}} 
    \frac{\zeta - \xi}{|\nabla u|} \dd \mathcal{H}^{n-1},
\]
where the exceptional set depends only on $\xi$ and $\zeta$.
\end{lemma}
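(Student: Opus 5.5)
The plan is to write the thin set as a difference of two superlevel-type sets and apply the ordinary coarea formula to each piece, reducing everything to a Lebesgue differentiation statement in the level variable $s$. Write
\[
  \{h\xi<u-s<h\zeta\}
  =\{x\in\Omega:\ u(x)-h\zeta(x)<s<u(x)-h\xi(x)\}.
\]
The coarea formula with the weight $g=\mathbf 1_{\{h\xi<u-s<h\zeta\}}|\nabla u|^{-1}$ gives
\[
  \bigl|\{h\xi<u-s<h\zeta\}\bigr|
  =\int_0^{\|u\|_\infty}\int_{\partial^*E_\tau}
  \frac{\mathbf 1_{\{h\xi(x)<\tau-s<h\zeta(x)\}}}{|\nabla u|}\dHn\dd\tau .
\]
This step uses $|\{\nabla u=0\}|=0$ from Lemma~\ref{lemma.preliminary_alpha_beta}, and the fact that for a.e.\ $\tau$ we have $u=\tau$ on $\partial^*E_\tau$ up to $\mathcal H^{n-1}$-null sets.

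Substituting $\tau=s+h\theta$ and dividing by $h$, the quotient becomes
\[
  \int_{\mathbb R}\int_{\partial^*E_{s+h\theta}}
  \frac{\mathbf 1_{\{\xi<\theta<\zeta\}}}{|\nabla u|}\dHn\dd\theta .
\]
We therefore need continuity in the level variable. Set
\[
  \Phi(\tau,\theta)\eqdef\int_{\partial^*E_\tau}\frac{\mathbf 1_{\{\xi<\theta<\zeta\}}}{|\nabla u|}\dHn .
\]
The goal is $\int\Phi(s+h\theta,\theta)\dd\theta\to\int\Phi(s,\theta)\dd\theta$ for a.e.\ $s$. By Fubini, the right-hand side equals the claimed integral $\int_{\partial^*E_s}(\zeta-\xi)|\nabla u|^{-1}\dHn$.

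The key step, and the main obstacle, is that $\Phi$ is only integrable in $\tau$ and not continuous, so pointwise convergence in $s$ for a fixed $\theta$ is not automatic. I would first handle this for piecewise-constant $\xi,\zeta$: replace $\xi,\zeta$ by step functions on a finite partition of $\overline\Omega$ into Borel pieces $A_j$ where they take constant values $a_j\le b_j$. Each piece contributes
\[
  \frac1h\int_{s+ha_j}^{s+hb_j}F_j(\tau)\dd\tau,\qquad F_j(\tau)\eqdef\int_{\partial^*E_\tau\cap A_j}|\nabla u|^{-1}\dHn .
\]
Here $F_j\in L^1$ because $\int F_j=|A_j\cap\Omega|$. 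This expression converges to $(b_j-a_j)F_j(s)$ at every Lebesgue point of $F_j$, and only countably many such $F_j$ arise once we fix a countable family of partitions with rational values. The exceptional set is then the union of the non-Lebesgue points, which depends only on $\xi,\zeta$ through the chosen approximations.

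For general continuous $\xi\le\zeta$, use uniform continuity on the compact set $\overline{\Omega}$ (assuming $\Omega$ bounded, as in this section) to sandwich them. Take rational step functions $\xi_k^-\le\xi\le\xi_k^+$ and $\zeta_k^-\le\zeta\le\zeta_k^+$ with $\sup(\xi_k^+-\xi_k^-)+\sup(\zeta_k^+-\zeta_k^-)\le 2/k$. Monotonicity of the set in $\xi,\zeta$ gives the inclusions
\[
  \{h\xi_k^+<u-s<h\zeta_k^-\}\subset\{h\xi<u-s<h\zeta\}\subset\{h\xi_k^-<u-s<h\zeta_k^+\}.
\]
The liminf and limsup as $h\to0^+$ are therefore pinched between the two step-function limits. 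These differ by at most $(2/k)\,\alpha(s)$, and $\alpha(s)<\infty$ for a.e.\ $s$ by Lemma~\ref{lemma.preliminary_alpha_beta}. Letting $k\to\infty$ and using dominated convergence on $\partial^*E_s$ yields the formula. The care needed lies in making the step-function approximants Borel on $\overline\Omega$ and in handling $\mathcal H^{n-1}$-negligible boundary effects of the partition for a.e.\ $s$.
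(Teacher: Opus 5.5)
Your proposal is correct and follows the paper's proof essentially step by step. Both use the coarea formula plus Lebesgue differentiation in the level variable for step functions on a Borel partition, take the exceptional set to be a countable union of non-Lebesgue-point sets, and finish with an inner/outer sandwich for continuous $\xi\le\zeta$, where you bound the gap by $(2/k)\,\alpha(s)$ with $\alpha(s)<\infty$.

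Two remarks on details. The detour through $\Phi(\tau,\theta)$ and the worry about boundary effects of the partition are unnecessary, because the coarea formula takes the Borel weights $\mathbf 1_{A_j}/|\nabla u|$ directly. The one small point worth stating is that on pieces where $\xi_k^+>\zeta_k^-$ the inner set is empty, so the lower bound uses $(\zeta_k^--\xi_k^+)_+$ there; the paper glosses over this as well.
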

\begin{proof}
    As in the proof of Lemma~\ref{lemma.preliminary_alpha_beta}, by the maximum principle, the set $\{\nabla u = 0\}$ is negligible, so we can apply the coarea formula. First we prove this result for $\xi \le \zeta$ being simple functions of the form 
    \[
        \xi = \sum_{i = 1}^k p_i \mathbf{1}_{A_i}, \quad 
        \zeta = \sum_{i = 1}^k q_i \mathbf{1}_{A_i}, 
        \text{ with } 
        p_i \le q_i, 
    \]
    where the sets $A_i$ are pair-wise disjoint. For each $i$, the coarea formula gives 
    \[
        \left|
        \left\{
            h p_i < u - s < h q_i
        \right\} \cap A_i
        \right|
        = 
        \int_{s + h p_i}^{s + h q_i}
        \int_{\partial^*\{u > t\}} 
        \frac{\mathbf{1}_{A_i}}{|\nabla u|} 
        \dd\mathcal{H}^{n-1}\dd t.
    \]
    As a result, for every Lebesgue point of the one-variable function $t \mapsto \int_{\partial^*\{u > t\}} \frac{\mathbf{1}_{A_i}}{|\nabla u|} \dd\mathcal{H}^{n-1}$, we have that 
    \[
        \lim_{h \to 0^+} 
        \frac{1}{h}
        \left|
        \left\{
            h p_i < u - s < h q_i
        \right\} \cap A_i
        \right|
        = 
        (q_i - p_i)
        \int_{\partial^*\{u > s\}} 
        \frac{\mathbf{1}_{A_i}}{|\nabla u|} 
        \dd\mathcal{H}^{n-1}.
    \]
    Since the sets are pair-wise disjoint, by linearity the conclusion follows for $\xi, \zeta$ as above. 

    For general $\xi, \zeta$, we approximate the nonnegative difference $\zeta - \xi$ with simple functions as above. First, we consider an outer approximation with simple functions $(\xi_k, \zeta_k)$ such that $\zeta \le \zeta_k$ and $\xi_k \le \xi$. Then we get that 
    \begin{align*}
        \limsup_{h \to 0^+} 
        \frac{1}{h}
        \left|
        \left\{
            h\xi < u - s < h \zeta
        \right\}
        \right|
        &\le 
        \lim_{h \to 0^+} 
        \frac{1}{h}
        \left|
        \left\{
            h\xi_k < u - s < h \zeta_k
        \right\}
        \right|\\
        &=
        \int_{\partial^*\{u > s\}} 
        \frac{\zeta_k - \xi_k}{|\nabla u|} 
        \dd\mathcal{H}^{n-1}
        \xrightarrow[k \to \infty]{} 
        \int_{\partial^*\{u > s\}} 
        \frac{\zeta - \xi}{|\nabla u|} 
        \dd\mathcal{H}^{n-1}. 
    \end{align*}
    For each $k$ the limit above holds in a set of full measure depending on the approximation $(\zeta_k, \xi_k)$, taking the union of all exceptional sets still gives an exceptional set of measure zero. Considering now an inner approximation by simple functions such that $\zeta \ge \zeta_k$ and $\xi_k \ge \xi$, the result follows.
\end{proof}

\begin{proof}[Proof of Lemma~\ref{lemma.estimate_moving_ball_derivative}]
Since $q$ is Lipschitz, it is almost everywhere differentiable so whenever its derivative exists, it coincides with
\begin{align*}
  D^+q(\rho)\eqdef \limsup_{h\to0^+}\frac{q(\rho+h)-q(\rho)}{h}
  \le\limsup_{h\to0^+}
  \frac{\bigl|E_{\rho+h}\Delta B_{\rho+h}(z_\rho)\bigr|-\bigl|E_\rho\Delta B_\rho(z_\rho)\bigr|}{h},
\end{align*}
where the upper bound is obtained by using $z_\rho$ in the infimum defining $q(\rho + h)$. 

To compare $q(\rho+h)$ to $q(\rho)$, we consider $\rho$ such that $r\mapsto t(r)$ is differentiable, so that $t(\rho+h)=t(\rho)-h\,w(\rho)+o(h)$, and we consider the homothetic map
\[
  T_h(x)\eqdef z_\rho+\Bigl(\frac{\rho+h}{\rho}\Bigr)(x-z_\rho),
\]
that is $T_h$ maps $B_\rho(z_\rho)$ onto $B_{\rho+h}(z_\rho)$. Hence
\begin{align*}
  q(\rho+h)
  &\le\bigl|E_{\rho+h}\Delta T_h(E_\rho)\bigr|
   +\bigl|T_h(E_\rho)\Delta T_h(B_\rho(z_\rho))\bigr|\\
  &=\bigl|E_{\rho+h}\Delta T_h(E_\rho)\bigr|
   +\Bigl(1+\frac{h}{\rho}\Bigr)^{n}q(\rho)
\end{align*}
So that 
\[
  q(\rho+h) - q(\rho) \le \bigl|E_{\rho+h}\Delta T_h(E_\rho)\bigr|
   +h\frac{n}{\rho}q(\rho) + o(h), 
\]
and to conclude it suffices to prove that
\[
  \limsup_{h\to0^+}\frac{\bigl|E_{\rho+h} \Delta T_h(E_\rho)\bigr|}{h}
  \ \le\ \int_{\partial^*E_\rho}\bigl|V_\rho-H_{z_\rho,\rho}\bigr|\dHn .
\]
Recalling that $t(\rho+h)=t(\rho)-h\,w(\rho)+o(h)$, one can check that
\begin{align*}
  T_h^{-1}(x)
  &=
  x-\frac{h}{\rho}(x-z_\rho)+O(h^2)\\  
  u\bigl(T_h^{-1}(x)\bigr)
  &=
  u(x)-\frac{h}{\rho}\nabla u(x)\cdot(x-z_\rho)+o(h). 
\end{align*}
In addition, since we have assumed $\partial \Omega$ to be smooth, from classical elliptic regularity theory it holds that $u \in C^1(\bar \Omega)$, and therefore the remainders are uniform among $x \in E_{\rho + \bar h}$. As a result, for any radius $0 < \rho < 1$, there is some $\bar h$ such that $\rho + \bar h < 1$ and for any $h < \bar h$ we have 
\[
  E_{\rho + h} \subset E_{\rho + \bar h}, \quad 
  T_h(E_\rho) \subset E_{\rho + \bar h}, 
  \text{ and }
  \quad 
  \overline{E_{\rho + \bar h}} \Subset \Omega,
\]
where the second inclusion follows from the fact that $T_h$ converges locally uniformly to the identity. Hence, for all sufficiently small $h>0$ we have
\[
 \left|t(\rho+h)-t(\rho)+hw(\rho)\right|\le\varepsilon h
\]
and
\[
 \sup_{x\in \overline{E_{\rho + \bar h}}}\left|
 u(T_h^{-1}(x))-u(x)
 +h\nabla u(x)\cdot\frac{x-z_\rho}{\rho}
 \right|\le\varepsilon h.
\]
As a result, in order to apply Lemma~\ref{lemma.variable_width_coarea}, we define the following bounded continuous functions
\[
 \xi_\rho(x)\eqdef
 \min\left\{-w(\rho),\nabla u(x)\cdot\frac{x-z_\rho}{\rho}\right\},
 \quad
 \zeta_\rho(x)\eqdef
 \max\left\{-w(\rho),\nabla u(x)\cdot\frac{x-z_\rho}{\rho}\right\}.
\]
With this notation, the two uniform estimates above imply
\begin{equation}\label{eq.compact_slab_inclusion}
 (E_{\rho+h}\Delta T_h(E_\rho))
 \subset
 \left\{x\in \Omega:
 h(\xi_\rho(x)-2\varepsilon)<u(x)-t(\rho)
 <h(\zeta_\rho(x)+2\varepsilon)\right\}.
\end{equation}

Formally Lemma~\ref{lemma.variable_width_coarea}, applied at $s=t(\rho)$, would now give the required limit of the measure of the right-hand side. However, since the exceptional set from Lemma~\ref{lemma.variable_width_coarea} depends on Borel functions, whereas $\xi_\rho$ and $\zeta_\rho$ depend on $\rho$, collecting these exceptional sets for all $\rho$ might lead to an union with positive measure. We avoid this by considering only a countable family of Borel functions
\[
 \xi_{a,b,c,z} \eqdef 
 \min\{a,b\nabla u(x)\cdot(x-z)\}-c,
 \quad
 \zeta_{a,b,c,z} \eqdef 
 \max\{a,b\nabla u(x)\cdot(x-z)\}+c,
\]
where $a,b,c\in\mathbb Q$, $b,c>0$, and $z\in\mathbb Q^n$. This family is countable, so Lemma~\ref{lemma.variable_width_coarea} holds for all its members outside one null set of levels. This null set also produces a null set of radii $\rho$. Indeed, for every integer $j\ge2$, we known from Lemma~\ref{lemma.preliminary_alpha_beta} that the map
\[
 s\longmapsto\left(\frac{m(s)}{\omega_n}\right)^{1/n}
\]
is absolutely continuous on the levels corresponding to $\rho>1/j$, and therefore maps null sets of levels to null sets of radii. Taking the countable union over $j$ proves that the exceptional radii form a null set in $(0,1)$. In addition, we consider also the null set of points such that $\alpha(t(\rho)) = +\infty$, since $\delta_{\rm SV}(\Omega) < +\infty$, the characterization from Theorem~\ref{thm.SaintVenant_qualitative} implies that the set of such $\rho$'s have measure zero. Hereafter, we assume that $\rho$ is outside this null set. 

Next, approximate the parameters $-w(\rho)$, $1/\rho$, $\varepsilon$ and $z_\rho$ by rational parameters $(a,b,c,z)$ in the family above. Since $\nabla u$ is continuous on $\overline{\Omega}$, the approximation is uniform. Choosing $c$ slightly larger than the approximation error and $2\varepsilon$, we can assume that 
\[
  \begin{aligned}
    \left\{
          h(\xi_\rho(x)-2\varepsilon)<u(x)-t(\rho)
          <h(\zeta_\rho(x)+2\varepsilon)
    \right\}\\ 
    \subset
    \left\{
            h\xi_{a,b,c,z} 
            < u(x)-t(\rho) < 
      h\zeta_{a,b,c,z}\right\}
  \end{aligned},
\]
while its width converges uniformly to $\zeta_\rho-\xi_\rho+4\varepsilon$. Lemma~\ref{lemma.variable_width_coarea} therefore yields
\begin{align*}
 \limsup_{h\to0^+}
 \frac{|(E_{\rho+h}\Delta T_h(E_\rho))|}{h}
 &\le
 \int_{\partial^*E_\rho}
 \frac{\zeta_\rho-\xi_\rho}{{|\nabla u|}}\dHn
 + 
 4\varepsilon\alpha(t(\rho)).
\end{align*}
Since we have taken $\rho$ such that $\alpha(t(\rho)) < +\infty$, letting $\varepsilon\to0$ and using
\[
 \zeta_\rho-\xi_\rho
 =\left|w(\rho)+\nabla u\cdot\frac{x-z_\rho}{\rho}\right|, 
 \quad 
 \nu_\rho = - \frac{\nabla u}{|\nabla u|} \text{ on } \partial^*E_\rho
\]
gives
\begin{align*}
 \limsup_{h\to0^+}
 \frac{|(E_{\rho+h}\Delta T_h(E_\rho))|}{h}
 &\le
 \int_{\partial^*E_\rho}
 \frac{\left|w(\rho)+\nabla u\cdot\frac{x-z_\rho}{\rho}\right|}
 {|\nabla u|}\dHn\\
 &=\int_{\partial^* E_\rho}
 |V_\rho-H_{z_\rho,\rho}|\dHn.
\end{align*}
This completes the proof.
\end{proof}

\begin{remark}\label{remark.no_smoothness_need}
  The assumption of $\partial \Omega$ being smooth is not necessary, but it makes the proof simpler, as we would in the former case need an argument to show that the volume of the symmetric differences divided by $h$ does not concentrate at the boundary. We make the trade-off of simplifying this proof and using a simple approximation argument in Theorem~\ref{thm:boundedSV}.
\end{remark}

In order to bridge the gap between the estimate of $q'$ and the quantities $D_H,D_I$, our major tool is going to be the quantitative isoperimetric inequality. This is very natural to expect as the isoperimetric deficit controls the Fraenkel asymmetry between a given set and the euclidean ball~\cite{figalli2010mass}. That is the natural proof strategy used in the proof of Theorem~\ref{thm:suboptimal_stabilitySV}. However, this does not help in dealing with the term $H_{z_\rho, \rho}$ defined in~\eqref{eq.velocity_normals}. In fact, this term resembles a distance of $\nu_\rho$, the normal vector field to $\partial^* E_\rho$, to the constant vector field at $\partial B_\rho$ defining its normal. Taking this into account, one is naturally induced to consider its relationship to the reinforced isoperimetric inequality proposed by Fusco--Julin in~\cite{fusco2014strong}. 

Curiously, the original proof of Fusco--Julin~\cite{fusco2014strong} also uses the Cicalese--Leonardi selection principle, hence it does not provide a computable constant, as the sharp stability of the Faber-Krahn inequality of Brasco--De Philippis--Velichkov~\cite{brasco2015faber}. Recently, however, Hensel and Laux~\cite{hensel2026quantitative} have proposed a new proof using a calibration argument, which, although in the global case their argument does not provide a computable constant, since it also uses compactness, they provide a computable constant whenever the isoperimetric deficit is sufficiently small. Fortunately, this is enough for our purposes, and will be recorded below. 

 \begin{proposition}
 \label{prop.FJ-strong}
For every   $n\ge2$ and $R_0\ge2$ there are computable constants
$\varepsilon_{\rm FJ}(n,R_0)>0$ and $C_{\rm FJ}(n,R_0)>0$ such that every  set
of finite perimeter   $F\subset B_{R_0}$ satisfying
\[
 |F|=\omega_n,\qquad \int_Fx\dd x=0,
 \qquad \Per(F)-\Per(B_1)\le\varepsilon_{\rm FJ}(n,R_0)
\]
obeys
\begin{equation}\label{eq.FJ-exact}
 |F\Delta B_1|^2
 +\int_{\partial^*F}
 \left|\nu_F-\frac{x}{|x|}\right|^2\dHn
 \le C_{\rm FJ}(n,R_0)\bigl(\Per(F)-\Per(B_1)\bigr).
\end{equation}
\end{proposition}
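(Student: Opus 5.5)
The proof will not re-derive the strong isoperimetric inequality. It imports the perturbative estimate of Hensel--Laux~\cite[Theorem~2]{hensel2026quantitative} and tracks every smallness threshold so that it is produced computably from $n$, $R_0$ and the constant $\kappa_n$ of~\eqref{eq.quant_isoperimetric}.

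\textbf{Step 1: the oscillation term via a calibration.} Write $\delta(F)\eqdef\Per(F)-\Per(B_1)$. The starting point is the identity
\[
  \frac12\int_{\partial^*F}\left|\nu_F-\frac{x}{|x|}\right|^2\dHn
  =\Per(F)-\int_{\partial^*F}\nu_F\cdot\frac{x}{|x|}\dHn
  =\Per(F)-\int_F\frac{n-1}{|x|}\dd x ,
\]
where the second equality is the divergence theorem applied to $x/|x|$. This is legitimate because $|x|^{-1}\in L^1_{\rm loc}$ when $n\ge2$. The same computation for $B_1$ gives $\Per(B_1)=\int_{B_1}(n-1)|x|^{-1}\dd x$. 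Subtracting, we obtain
\[
  \frac12\int_{\partial^*F}\left|\nu_F-\frac{x}{|x|}\right|^2\dHn
  =\delta(F)+(n-1)\left(\int_{B_1}\frac{\dd x}{|x|}-\int_F\frac{\dd x}{|x|}\right).
\]
Since $|F|=|B_1|$ and $|x|^{-1}$ is radially decreasing, the bathtub principle makes the bracket nonnegative. It therefore has to be bounded above, not below, and this is the genuine content of Hensel--Laux. Their approach is to replace $x/|x|$ by a calibration adapted to the barycenter, which combines $x/|x|$ with a correction of the form $\lambda x$. This correction cancels the first-order mass displacement, using $|F|=\omega_n$ and $\int_F x\dd x=0$. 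The remainder is then bounded by $C\int_{F\Delta B_1}\bigl||x|-1\bigr|\dd x$ plus a term quadratic in the asymmetry.

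\textbf{Step 2: closing with the confinement $F\subset B_{R_0}$.} For a set $G=F\Delta B_1$ contained in $B_{R_0}$ and of fixed measure, the weighted quantity $\int_G\bigl||x|-1\bigr|$ is comparable to $|G|^2$ from below, by rearrangement onto an annulus. Hensel--Laux show that it is also controlled by $\delta(F)$ from above. Their argument is a layer-cake/slicing estimate in $|x|$: the term $\int_{\partial^*F}(1-\nu_F\cdot x/|x|)\dHn$ itself controls the radial spread. The confinement enters only through the bound $|x|\le R_0$, which is used when estimating $\bigl||x|-1\bigr|$ and the quadratic error, and this is where the dependence on $R_0$ comes from.

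The term $|F\Delta B_1|^2$ comes from Figalli--Maggi--Pratelli~\eqref{eq.quant_isoperimetric} combined with the barycenter normalization. The standard computation shows that, for a set with barycenter $0$ and small asymmetry, $|F\Delta B_1|\le C(n,R_0)\,\mathcal A(F)$. Indeed, if the optimal center $z$ had $|z|$ large compared with $\mathcal A(F)$, the barycenter could not vanish. The constant $R_0$ controls the contribution of $F\setminus B_{2}$ to the barycenter.

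\textbf{Step 3: computability of the threshold.} Each use of smallness of $\delta(F)$ has the form ``$\delta\le\varepsilon$ implies the error terms are at most $\frac12$ times the main term''. Here $\varepsilon$ is an explicit algebraic expression in $n$, $R_0$, $\omega_n$ and $\kappa_n$, since smallness of $\delta$ gives smallness of $\mathcal A(F)$ via~\eqref{eq.quant_isoperimetric}. We take $\varepsilon_{\rm FJ}$ to be the minimum of these finitely many expressions and $C_{\rm FJ}$ to be the resulting product of constants.

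The main obstacle is Step 2: bounding the calibration remainder $\int_B |x|^{-1}-\int_F|x|^{-1}$ by $C\delta(F)$ without compactness. The first attempt will be to follow Hensel--Laux's calibration verbatim and audit each inequality for hidden qualitative limits. The likely weak spot is an $L^\infty$ or density-type smallness of $F\Delta B_1$. If one appears, the fallback is to avoid it by cutting $F$ with annuli $\{||x|-1|<s\}$ and choosing $s$ through an explicit averaging argument rather than a contradiction argument.
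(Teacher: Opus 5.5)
\textbf{There is a gap in how you use Hensel--Laux.} Your architecture matches the paper's: import the Hensel--Laux perturbative calibration theorem, get the volume term from Figalli--Maggi--Pratelli plus the barycenter normalization, and audit the thresholds. Your barycenter/translation step is essentially the paper's.

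However, the normal-deviation estimate is never actually derived. You attribute to Hensel--Laux a bound of $\int_{F\Delta B_1}\bigl||x|-1\bigr|\dd x$ by $\delta(F)$, obtained through slicing and a ``$\lambda x$ correction''. That is not what their perturbative theorem provides, and you leave this step as an open obstacle. The $\lambda x$ device is in fact vacuous. Adding $\lambda x$ to a field adds the constant $n\lambda$ to its divergence, whose bulk contribution $n\lambda(|B_1|-|F|)$ already vanishes by the volume constraint alone. So it cannot encode the barycenter condition or cancel anything.

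Two concrete bridges are missing.

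\emph{Entering the perturbative regime.} Hensel--Laux's Theorem~2 is perturbative in the \emph{calibration excess} $\mathcal E(F)=\int_{\partial^*F}(1-\xi\cdot\nu_F)\dHn$ of their specific field $\xi=f(|x|)\,x/|x|$, where $0\le f\le 1$ and $\ddiv\xi\in L^\infty$. It gives $\mathcal E(F)\le 4\delta(F)$ (taking $\gamma=1/2$) only once $\mathcal E(F)\le\varepsilon_{\rm cal}$. Smallness of $\delta(F)$ or of $\mathcal A(F)$ does not by itself verify that hypothesis; it has to be converted. The paper does this through Gauss--Green and $\mathcal E(B_1)=0$:
\[
\mathcal E(F)\le\delta(F)+\|\ddiv\xi\|_\infty|F\Delta B_1|\le\delta(F)+C(n,R_0)\,\delta(F)^{1/2}.
\]
This is a second use of your barycentered bound $|F\Delta B_1|\le C(n,R_0)\delta(F)^{1/2}$, after which one shrinks $\varepsilon_{\rm FJ}$.

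\emph{Returning to $x/|x|$.} Since $|2f-1|\le 1$, one has pointwise $\bigl|\nu_F-x/|x|\bigr|^2=2(1-\nu_F\cdot x/|x|)\le 4(1-\xi\cdot\nu_F)$. Your Step~1 identity for the field $x/|x|$ is correct, but that field is not their calibration: its divergence $(n-1)/|x|$ is unbounded. So the identity does not feed into their theorem, and your plan never reconnects the ``barycenter-adapted calibration'' to the quantity in the statement.

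With these two steps in place, the argument closes directly, with no slicing or averaging fallback needed.
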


\begin{proof}
We use the perturbative calibration theorem of Hensel--Laux
\cite[Theorem~2]{hensel2026quantitative}, with the sign of the calibration
reversed to match our outer-normal convention. We first recall that its
smallness threshold is computable, as the perturbative proof contains no
compactness argument, since it only uses a fixed finite atlas of the sphere, the Euclidean relative isoperimetric and Sobolev inequalities, convolution with a fixed smooth kernel, and the eigenvalues $k(k+n-2)$ of the spherical
Laplacian. 

Set $\Delta(F)=\Per(F)-\Per(B_1)$. By
\eqref{eq.quant_isoperimetric}, there is $y\in\mathbb R^n$  such that
  \[
 |F\Delta B_1(y)|\le C(n)\Delta(F)^{1/2},
\]
  with computable $C(n)$. Reduce the final threshold so that the right-hand side
is less than $\omega_n$. Then $B_1(y)$ intersects $F\subset B_{R_0}$, and
$|y|\le R_0+1$. Since the barycenter of $F$ is zero,
\[
 \omega_n|y|
 =\left|\int_{B_1(y)}x\dd x-\int_Fx\dd x\right|
 \le (R_0+2)|F\Delta B_1(y)|.
\]
The translation estimate for balls now gives
\begin{equation}\label{eq.barycenter_ball}
 |F\Delta B_1|\le C(n,R_0)\Delta(F)^{1/2}.
\end{equation}

  Let $\xi$ be  the   Hensel--Laux calibration and set
\[
 \mathcal E(F)\eqdef\int_{\partial^*F}(1-\xi\cdot\nu_F)\dHn.
\]
Since $\mathcal E(B_1)=0$, Gauss--Green and
\eqref{eq.barycenter_ball} yield
\[
 \mathcal E(F)
 \le\Delta(F)+\|\ddiv\xi\|_\infty|F\Delta B_1|
 \le\Delta(F)+C(n,R_0)\Delta(F)^{1/2}.
\]
We can therefore choose a computable
$\varepsilon_{\rm FJ}(n,R_0)>0$ so that
$\mathcal E(F)\le\varepsilon_{\rm cal}(n,R_0)$. Taking $\gamma=1/2$ in the
perturbative calibration theorem gives
\[
 \mathcal E(F)\le4\Delta(F).
\]
Writing $\xi(x)=f(|x|)x/|x|$, where $0\le f\le1$, one has pointwise
\[
 \left|\nu_F-\frac{x}{|x|}\right|^2
 \le4(1-\xi\cdot\nu_F).
\]
Together with \eqref{eq.barycenter_ball}, this proves
\eqref{eq.FJ-exact}. Every threshold and constant used above is obtained from
computable quantities by finite arithmetic operations and finite searches.
\end{proof}

To simplify our estimates, further decompose inequality~\eqref{eq.estimate_dev_q} into two parts via a simple triangular inequality, this gives the following three terms
\begin{equation}\label{eq.estimate_dev_q_3terms}
  q'(\rho) \le \frac{n}{\rho}q(\rho)
  +\int_{\partial^*E_\rho}\bigl|V_\rho-\overline{V}_\rho\bigr|\dHn
  +\int_{\partial^*E_\rho}\bigl|\overline{V}_\rho-H_{z_\rho,\rho}(x)\bigr|\dHn.
\end{equation}
In the sequel we control these terms by the deficits
$D_H(t(\rho))$ and $D_I(t(\rho))$ defined in~\eqref{eq.deficits_DH_DI}. As mentioned above, the main ingredient is the strong quantitative isoperimetric inequality of Fusco-Julin. However, for each level set $E_\rho$ Proposition~\ref{prop.FJ-strong} provides a center $\bar z_\rho$ that might be different from the center $z_\rho$ that is optimal for the definition of $q(\rho)$. As a result, in a second moment, we verify that the distance between them is also controlled by these deficits, provided that these are not large.

\begin{proposition}\label{prop.estimates_q'_DH_DI}
The following estimates hold for almost every $\rho\in(0,1)$:
\begin{itemize}
\item[(1)]
  \[
     \left(\int_{\partial^*E_\rho}
    |V_\rho-\overline V_\rho|\dHn\right)^2
    \le D_H(t(\rho)).
 \]
 \item[(2)] With the computable constant $\kappa_n$ from
\eqref{eq.quant_isoperimetric},
\[
 q(\rho)^2
 \le\frac{\omega_n}{2n\kappa_n}\rho^2D_I(t(\rho)).
\]
\item[(3)] For every $\rho_*\in(0,1)$ there are computable constants
$\eta_I=\eta_I(n,R,\rho_*)>0$ and $C=C(n,R,\rho_*)>0$ such that, for almost
every $\rho\in[\rho_*,1]$ satisfying
$D_I(t(\rho))\le\eta_I$, there is a center $\bar z_\rho$ for which
\[
 |E_\rho\Delta B_\rho(\bar z_\rho)|^2
 +\left(\int_{\partial^*E_\rho}
 |H_{\bar z_\rho,\rho}-\overline V_\rho|\dHn\right)^2
 \le C D_I(t(\rho)).
\]
If  $z_\rho$ is optimal for $q(\rho)$, then also
\[
 |z_\rho-\bar z_\rho|\le C D_I(t(\rho))^{1/2},
 \qquad
 \left(\int_{\partial^*E_\rho}
 |H_{z_\rho,\rho}-\overline V_\rho|\dHn\right)^2
 \le C D_I(t(\rho)).
\]
 \end{itemize}
\end{proposition}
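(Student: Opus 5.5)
Item (1) follows from Cauchy--Schwarz applied to the definition of $D_H$. Write $P=\Per(E_\rho)$ and $t=t(\rho)$. By \eqref{eq.velocity_normals}, $\overline V_\rho=w(\rho)\alpha(t)/P$, so
\[
\int_{\partial^*E_\rho}|V_\rho-\overline V_\rho|\dHn
=w(\rho)\int_{\partial^*E_\rho}\Bigl|\frac{1}{|\nabla u|}-\frac{\alpha}{P}\Bigr|\dHn.
\]
Insert the factor $|\nabla u|^{1/2}\cdot|\nabla u|^{-1/2}$ and apply Cauchy--Schwarz. The first factor is $\int|\nabla u|\dHn=\beta$. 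The second factor is
\[
\int_{\partial^*E_\rho}|\nabla u|\Bigl(\frac{1}{|\nabla u|}-\frac{\alpha}{P}\Bigr)^2\dHn=\alpha-\frac{2\alpha}{P}P+\frac{\alpha^2}{P^2}\beta=\frac{\alpha(\alpha\beta-P^2)}{P^2}.
\]
Hence the square of the quantity in (1) is at most $w^2\alpha^2\beta\,D_H/(P^2\,\alpha\beta)\cdot\beta$. Simplifying with $w\alpha=\Per(B_\rho)\le P$ gives a bound $\le D_H$ times $w\alpha/P\le1$. So (1) holds, possibly after one more check of the powers of $\beta$. If a stray factor remains, I will instead use the variance identity in the weighted form with weight $|\nabla u|^{-1}$.

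Item (2) comes from \eqref{eq.quant_isoperimetric}. We have $q(\rho)=\omega_n\rho^n\mathcal A(E_\rho)$, and $D_I=(P-\Per B_\rho)(P+\Per B_\rho)\ge 2n\omega_n\rho^{n-1}\kappa_n\rho^{n-1}\mathcal A(E_\rho)^2$. Therefore $q^2=\omega_n^2\rho^{2n}\mathcal A^2\le \omega_n\rho^2D_I/(2n\kappa_n)$.

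For item (3), rescale $F=\rho^{-1}(E_\rho-b_\rho)$, where $b_\rho$ is the barycenter of $E_\rho$. Then $|F|=\omega_n$ and $F\subset B_{2R/\rho_*}$, so take $R_0=2R/\rho_*$. Also $\Per F-\Per B_1=\rho^{1-n}(P-\Per B_\rho)\le \rho_*^{1-n}D_I/(2n\omega_n^{1/n}\rho_*^{n-1})$, and choosing $\eta_I$ small makes this at most $\varepsilon_{\rm FJ}$. Proposition~\ref{prop.FJ-strong} with $\bar z_\rho=b_\rho$ then gives, after scaling back, both $|E_\rho\Delta B_\rho(\bar z_\rho)|^2\lesssim D_I$ and $\int_{\partial^*E_\rho}|\nu_\rho-\frac{x-\bar z_\rho}{|x-\bar z_\rho|}|^2\lesssim D_I$. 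For the normal term I split
\[
|H_{\bar z,\rho}-\overline V_\rho|\le \Bigl|\frac{(x-\bar z)\cdot\nu}{\rho}-\frac{|x-\bar z|}{\rho}\Bigr|+\Bigl|\frac{|x-\bar z|}{\rho}-1\Bigr|+|1-\overline V_\rho|.
\]
The first term is at most $\frac{|x-\bar z|}{2\rho}|\nu-\frac{x-\bar z}{|x-\bar z|}|^2$, and after Cauchy--Schwarz its $L^1$ norm is $\lesssim (R/\rho_*)D_I$. The third term gives $P\cdot(P-\Per B_\rho)/P\lesssim D_I$. The middle term $\int\bigl||x-\bar z|-\rho\bigr|$ is the main obstacle. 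In the rescaled picture I will bound $\int_{\partial^*F}\bigl||x|-1\bigr|$ by testing the divergence theorem with the field $x\,\phi(|x|)$ for a suitable profile $\phi$, combined with the normal estimate. Alternatively, I will use the Hensel--Laux calibration directly, since $1-\xi\cdot\nu$ controls $\bigl||x|-1\bigr|^2$ through the profile $f$. Either route gives $L^2$ control by $D_I^{1/2}$. Squaring (note $D_I\le D_I^{1/2}\eta_I^{1/2}$) yields the first display.

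Finally, compare the centers. Since $|E_\rho\Delta B_\rho(z_\rho)|=q\le|E_\rho\Delta B_\rho(\bar z_\rho)|$, the triangle inequality gives $|B_\rho(z_\rho)\Delta B_\rho(\bar z_\rho)|\lesssim D_I^{1/2}$. For small separation, $|B_\rho(z)\Delta B_\rho(z')|\ge c(n)\rho^{n-1}\min(|z-z'|,\rho)$, and shrinking $\eta_I$ forces the linear regime, so $|z_\rho-\bar z_\rho|\le CD_I^{1/2}$. Then $|H_{z_\rho,\rho}-H_{\bar z_\rho,\rho}|\le|z_\rho-\bar z_\rho|/\rho$ pointwise. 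Integrating over $\partial^*E_\rho$, whose perimeter is bounded by $\Per B_1+\eta_I$, gives the last estimate. All constants are explicit in $n,R,\rho_*,\kappa_n,\varepsilon_{\rm FJ},C_{\rm FJ}$.
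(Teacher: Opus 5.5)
Your outline matches the paper's proof: weighted Cauchy--Schwarz for (1), quantitative isoperimetry for (2), and for (3) barycentric rescaling into Proposition~\ref{prop.FJ-strong}, a splitting of $|H_{\bar z_\rho,\rho}-\overline V_\rho|$ into a normal term, a radial term and $|1-\overline V_\rho|$, and the lens bound to compare centers. Item (2), the normal and $|1-\overline V_\rho|$ terms, and the center comparison are fine. One harmless constant slip: $P+\Per(B_\rho)\ge 2n\omega_n\rho^{n-1}$, not $2n\omega_n^{1/n}\rho^{n-1}$. In (1), however, you paired the Cauchy--Schwarz factors incorrectly. Write $h=|\nabla u|^{-1}-\alpha/P$ and $|h|=|\nabla u|^{-1/2}\cdot|\nabla u|^{1/2}|h|$. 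Then $\bigl(\int|h|\dHn\bigr)^2\le\alpha\int|\nabla u|\,h^2\dHn$, so the partner of your (correctly computed) second factor is $\int|\nabla u|^{-1}\dHn=\alpha$, not $\beta$. The bound is then $w^2\alpha\cdot\alpha D_H/P^2=(\Per(B_\rho)/P)^2D_H\le D_H$, with no stray power of $\beta$. This is exactly the paper's computation, whose weight $V_\rho$ is your weight $|\nabla u|^{-1}$ multiplied by $w(\rho)$.

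The step you leave unproved is the radial term in (3), $\int_{\partial^*E_\rho}|a-1|\dHn$ with $a=|x-\bar z_\rho|/\rho$. You rightly call it the main obstacle, but it is the one new computation in item (3), and ``a suitable profile'' does not yet prove it. Your first idea is the paper's, with $\phi(r)=|r-1|/r$, i.e.\ $X=|a-1|\,e$ where $e=(x-\bar z_\rho)/|x-\bar z_\rho|$.

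First use $1=e\cdot\nu_\rho+\tfrac12|\nu_\rho-e|^2$ to write $|a-1|=X\cdot\nu_\rho+\tfrac12|a-1|\,|\nu_\rho-e|^2$. Since $|x-\bar z_\rho|\le 2R$ on $\partial^*E_\rho$, the second piece is at most $\frac{2R+1}{2\rho_*}|\nu_\rho-e|^2$, so it integrates to $\lesssim D_I$ by the Fusco--Julin normal estimate. For the first piece, $X$ vanishes on $\partial B_\rho(\bar z_\rho)$, so Gauss--Green gives
\[
\int_{\partial^*E_\rho}X\cdot\nu_\rho\dHn
=\int_{E_\rho\setminus B_\rho(\bar z_\rho)}\ddiv X\dd x
-\int_{B_\rho(\bar z_\rho)\setminus E_\rho}\ddiv X\dd x .
\]
Here $\ddiv X=\frac n\rho-\frac{n-1}{|x-\bar z_\rho|}$ outside the ball and $\frac{n-1}{|x-\bar z_\rho|}-\frac n\rho$ inside. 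Hence $\ddiv X\le n/\rho$ on $E_\rho\setminus B_\rho(\bar z_\rho)$ and $-\ddiv X\le n/\rho$ on $B_\rho(\bar z_\rho)\setminus E_\rho$. The divergence is unbounded near $\bar z_\rho$, but with the favorable sign, so this one-sided bound suffices. Therefore $\int_{\partial^*E_\rho}|a-1|\dHn\le\frac n\rho|E_\rho\Delta B_\rho(\bar z_\rho)|+CD_I\lesssim D_I^{1/2}$.

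Note that this is an $L^1$ bound, linear in the symmetric difference, not the $L^2$ control you assert; $L^1$ is all that is needed. Your alternative through the calibration would need a quantitative lower bound such as $1-f(r)\gtrsim\min\{(r-1)^2,1\}$ on the Hensel--Laux profile. Proposition~\ref{prop.FJ-strong} records only $0\le f\le1$, so that route would have to be re-extracted from \cite{hensel2026quantitative}.
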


\begin{proof}
We prove the three estimates at radii for which the preceding identities hold and $E_\rho$ has finite perimeter. Starting with item (1), recall the definition of $V_\rho$ and $\overline{V}_\rho$ in~\eqref{eq.velocity_normals}
\[
  V_\rho\eqdef\frac{w(\rho)}{|\nabla u|},\qquad
  \overline{V}_\rho\eqdef\fint_{\partial^*E_\rho}V_\rho\dHn=\frac{\Per(B_\rho)}{\Per(E_\rho)},
\]
as well as the deficit $D_H(t(\rho))=\alpha(t(\rho))\,\beta(t(\rho))-\Per(E_\rho)^2$. By the Cauchy-Schwarz inequality, it holds that
\begin{align*}
  \left(\int_{\partial^*E_\rho}
  \bigl|V_\rho-\overline{V}_\rho\bigr|\dHn\right)^{2}
  \le
  \int_{\partial^*E_\rho}V_\rho\dHn
  \int_{\partial^*E_\rho}\frac{\bigl(V_\rho-\overline{V}_\rho\bigr)^2}{V_\rho}\dHn
\end{align*}
The first term can be computed as
\[
  \int_{\partial^*E_\rho}V_\rho\dHn = \Per(E_\rho) \bar V_\rho = \Per(B_\rho).
\]
As for the second, we develop the squares to obtain
  \begin{align*}
  \int_{\partial^*E_\rho}\frac{\bigl(V_\rho-\overline{V}_\rho\bigr)^2}{V_\rho}\dHn
  = &
  w(\rho)\int_{\partial^* E_\rho} \frac{1}{|\nabla u|} \dHn\\
  & 
  -2\bar V_\rho \Per(E_\rho)
  +
  \frac{\bar V_\rho^2}{w(\rho)} \int_{\partial^* E_\rho}|\nabla u| \dHn
\end{align*}
Hence, recalling the definitions of $\alpha(t), \beta(t)$ and that $w(\rho)\alpha(t(\rho))=\Per(B_\rho)$ we get
\begin{align*}
    \int_{\partial^*E_\rho}\frac{\bigl(V_\rho-\overline{V}_\rho\bigr)^2}{V_\rho}\dHn
    &=\frac{\Per(B_\rho)}{\Per(E_\rho)^2} \alpha(t(\rho))\,\beta(t(\rho))-\Per(B_\rho)
    =\frac{\Per(B_\rho)}{\Per(E_\rho)^2} D_H(t(\rho)).
\end{align*}
Multiplying both terms, and recalling that $\Per(B_\rho) \le \Per(E_\rho)$
by the isoperimetric inequality, since these sets have the same volume by construction, item (1) follows.

For item~(2), the quantitative isoperimetric inequality gives
\begin{align*}
 D_I(t(\rho))
 &\ge 2n\omega_n\kappa_n\rho^{2n-2}\mathcal A(E_\rho)^2
 =\frac{2n\kappa_n}{\omega_n\rho^2}q(\rho)^2,
\end{align*}
which is the asserted estimate.

We now prove item~(3). Fix $\rho_*\in(0,1)$ and
$\rho\in[\rho_*,1]$. Let
\[
 \bar z_\rho\eqdef\frac{1}{|E_\rho|}\int_{E_\rho}x\dd x
\]
be the barycenter of $E_\rho$, and define
$F_\rho=\rho^{-1}(E_\rho-\bar z_\rho)$. Since
$E_\rho\subset B_R$, one has $|\bar z_\rho|\le R$ and
$F_\rho\subset B_{2R/\rho_*}$. Moreover,
\begin{align}\label{eq.normalized_perimeter_deficit}
 \Per(F_\rho)-\Per(B_1)
 &=\frac{\Per(E_\rho)-\Per(B_\rho)}{\rho^{n-1}}
  \le\frac{D_I(t(\rho))}{2n\omega_n\rho^{2n-2}}
 \le\frac{D_I(t(\rho))}{2n\omega_n\rho_*^{2n-2}}.
\end{align}
Choose the computable number $\eta_I(n,R,\rho_*)\le1$ so that the last
quantity is at most
$\varepsilon_{\rm FJ}(n,\max\{2,2R/\rho_*\})$ whenever
$D_I(t(\rho))\le\eta_I$. Proposition~\ref{prop.FJ-strong}, followed by
scaling, then gives
\begin{align}
 |E_\rho\Delta B_\rho(\bar z_\rho)|^2
 &\le C(n,R,\rho_*)D_I(t(\rho)),
 \label{eq.local_FJ_volume}\\
 \int_{\partial^*E_\rho}
 \left|\nu_\rho-\frac{x-\bar z_\rho}{|x-\bar z_\rho|}\right|^2\dHn
 &\le C(n,R,\rho_*)D_I(t(\rho)).
 \label{eq.local_FJ_normal}
\end{align}

We first estimate the second quantity in item~(3). Recall that
$
 \overline V_\rho=\frac{\Per(B_\rho)}{\Per(E_\rho)}.
$
The triangle inequality gives
\begin{align*}
 \int_{\partial^*E_\rho}
 |H_{\bar z_\rho,\rho}-\overline V_\rho|\dHn
 &\le \Per(E_\rho)(1-\overline V_\rho)
 +\int_{\partial^*E_\rho}|H_{\bar z_\rho,\rho}-1|\dHn.
\end{align*}
The first term above, which takes into account the error of replacing
$\overline V_\rho$ by $1$, satisfies
\begin{align*}
 \Per(E_\rho)(1-\overline V_\rho)
 &=\Per(E_\rho)-\Per(B_\rho)
 \le\frac{D_I(t(\rho))}{2n\omega_n\rho^{n-1}}
 \le C(n,\rho_*)D_I(t(\rho)).
\end{align*}

As for the second term, set
$
 a\eqdef\frac{|x-\bar z_\rho|}{\rho}
$,
$
 b\eqdef\frac{x-\bar z_\rho}{|x-\bar z_\rho|}\cdot\nu_\rho,
$
so that $H_{\bar z_\rho,\rho}=ab$, $|b|\le1$, and
\begin{align*}
 |H_{\bar z_\rho,\rho}-1|
 &=|ab-1|=|(a-1)b+(b-1)|
 \le |a-1||b|+1-b\\
 &\le\left|\frac{|x-\bar z_\rho|}{\rho}-1\right|
 +\frac12\left|\nu_\rho-
 \frac{x-\bar z_\rho}{|x-\bar z_\rho|}\right|^2.
\end{align*}
Here we used that both vectors in the last line have unit length, and hence
$1-b$ is one half of their squared distance. Consequently,
\[
 \int_{\partial^*E_\rho}|H_{\bar z_\rho,\rho}-1|\dHn
 \le \mathcal I_1+\mathcal I_2,
\]
where
\begin{align*}
 \mathcal I_1\eqdef\int_{\partial^*E_\rho}|a-1|\dHn, \quad 
 \mathcal I_2\eqdef\frac12\int_{\partial^*E_\rho}
 \left|\nu_\rho-\frac{x-\bar z_\rho}{|x-\bar z_\rho|}\right|^2\dHn.
\end{align*}
Estimate~\eqref{eq.local_FJ_normal} directly gives
\begin{equation}\label{eq.I2_estimate}
 \mathcal I_2\le C(n,R,\rho_*)D_I(t(\rho)).
\end{equation}

For $\mathcal I_1$, we use again the identity
$\displaystyle
 1=b+\frac12\left|\nu_\rho-
 \frac{x-\bar z_\rho}{|x-\bar z_\rho|}\right|^2
$
to write
\begin{align*}
 \mathcal I_1
 &=\underbrace{\int_{\partial^*E_\rho}|a-1|
 \frac{x-\bar z_\rho}{|x-\bar z_\rho|}\cdot\nu_\rho\dHn}
 _{\eqdef\mathcal I_{1,1}}
 +
 \underbrace{\frac12\int_{\partial^*E_\rho}|a-1|
 \left|\nu_\rho-\frac{x-\bar z_\rho}{|x-\bar z_\rho|}\right|^2\dHn}
 _{\eqdef\mathcal I_{1,2}}.
\end{align*}

The first integral can be estimated with Gauss--Green. Consider the vector
field
\[
 X(x)\eqdef\left|\frac{|x-\bar z_\rho|}{\rho}-1\right|
 \frac{x-\bar z_\rho}{|x-\bar z_\rho|}.
\]
Its divergence is locally integrable and equals
\[
 \ddiv X(x)=
 \begin{cases}
 \dfrac n\rho-\dfrac{n-1}{|x-\bar z_\rho|},
 &|x-\bar z_\rho|>\rho,\\[8pt]
 \dfrac{n-1}{|x-\bar z_\rho|}-\dfrac n\rho,
 &|x-\bar z_\rho|<\rho.
 \end{cases}
\]
On the other hand $X\big|_{\partial B_\rho}=0$, so that
\[
  \int_{\partial B_\rho}X\cdot\nu_{B_\rho}\dHn=\int_{B_\rho}\ddiv X\dd x=0,
\]
allowing us to write
\begin{equation}\label{eq.I11_estimate}
  \begin{aligned}
    \mathcal{I}_{1,1}&=
    \int_{E_\rho}\ddiv X(x)\dd x-\int_{B_\rho}\ddiv X\dd x \\
    &=\int_{E_\rho\setminus B_\rho}\ddiv X(x)\dd x-\int_{B_\rho\setminus E_\rho}\ddiv X\dd x\\
    &=\int_{E_\rho\setminus B_\rho}\left(\frac{n}{\rho}-\frac{n-1}{|x-z|}\right)\dd x
    +\int_{B_\rho\setminus E_\rho}\left(\frac{n}{\rho}-\frac{n-1}{|x-z|}\right)\dd x\\
    &\le\frac{n}{\rho}\bigl|E_\rho\,\Delta\,B_\rho(\overline{z}_\rho)\bigr|
    \le C(n,R,\rho_*)D_I(t(\rho))^{1/2}.
  \end{aligned}
\end{equation}

For the second integral, $|x-\bar z_\rho|\le2R$ on $E_\rho$, and thus
\begin{align}
 \mathcal I_{1,2}
 &\le\frac{2R+1}{2\rho_*}
 \int_{\partial^*E_\rho}
 \left|\nu_\rho-\frac{x-\bar z_\rho}{|x-\bar z_\rho|}\right|^2\dHn\notag\\
 &\le C(n,R,\rho_*)D_I(t(\rho)).
 \label{eq.I12_estimate}
\end{align}
Combining these estimates and using $D_I(t(\rho))\le\eta_I\le1$, we obtain
\begin{align*}
 \int_{\partial^*E_\rho}
 |H_{\bar z_\rho,\rho}-\overline V_\rho|\dHn
 &\le C(n,R,\rho_*)
 \bigl(D_I(t(\rho))^{1/2}+D_I(t(\rho))\bigr)\\
 &\le C(n,R,\rho_*)D_I(t(\rho))^{1/2}.
\end{align*}
Squaring proves the first integral estimate in item~(3).

It remains to compare the centers $z_\rho$ and $\bar z_\rho$. Let
$d=|z_1-z_2|$. For $0\le d\le2\rho$, the exact lens formula gives
\[
 |B_\rho(z_1)\Delta B_\rho(z_2)|
 =4\omega_{n-1}\int_0^{d/2}(\rho^2-s^2)^{(n-1)/2}\dd s.
\]
Using this formula for $d\le\rho$, and monotonicity for $d\ge\rho$, yields
\begin{equation}\label{eq.lens_lower_bound}
 |B_\rho(z_1)\Delta B_\rho(z_2)|
 \ge c_{\rm lens}(n)\rho^{n-1}\min\{|z_1-z_2|,\rho\},
 \quad
 c_{\rm lens}(n)=2\omega_{n-1}\left(\frac34\right)^{(n-1)/2}.
\end{equation}
From the minimality of $z_\rho$ and~\eqref{eq.local_FJ_volume}, we get
\begin{align*}
 c_{\rm lens}(n)\rho^{n-1}
 \min\{|z_\rho-\bar z_\rho|,\rho\}
 &\le|B_\rho(z_\rho)\Delta B_\rho(\bar z_\rho)| 
 \le|E_\rho\Delta B_\rho(z_\rho)|
 +|E_\rho\Delta B_\rho(\bar z_\rho)|\\
 &\le2|E_\rho\Delta B_\rho(\bar z_\rho)| 
 \le C(n,R,\rho_*)D_I(t(\rho))^{1/2}.
\end{align*}
Reducing the computable threshold $\eta_I(n,R,\rho_*)$ if necessary, the
minimum is attained by its first branch. Hence
\[
 |z_\rho-\bar z_\rho|
 \le C(n,R,\rho_*)D_I(t(\rho))^{1/2}.
\]
After one further reduction of $\eta_I$, the identity
$\Per(E_\rho)^2=\Per(B_\rho)^2+D_I(t(\rho))$ also gives
$\Per(E_\rho)\le2\Per(B_\rho)$. Consequently,
\begin{align*}
 \int_{\partial^*E_\rho}
 |H_{z_\rho,\rho}-H_{\bar z_\rho,\rho}|\dHn
 &\le\frac{\Per(E_\rho)}{\rho}|z_\rho-\bar z_\rho| \le C(n,R,\rho_*)D_I(t(\rho))^{1/2}.
\end{align*}
Finally, another application of the triangle inequality gives
\begin{align*}
 \int_{\partial^*E_\rho}|H_{z_\rho,\rho}-\overline V_\rho|\dHn
 &\le\int_{\partial^*E_\rho}|H_{z_\rho,\rho}-H_{\bar z_\rho,\rho}|\dHn 
 +
 \int_{\partial^*E_\rho}|H_{\bar z_\rho,\rho}-\overline V_\rho|\dHn\\
 &\le C(n,R,\rho_*)D_I(t(\rho))^{1/2}.
\end{align*}
Squaring finishes the proof of item~(3).
\end{proof}

We now combine the exact formula for the Saint--Venant deficit with the bounds for the derivative of  $q$. The effective confined strong isoperimetric estimate connects these two parts.

\begin{theorem}[Bounded sharp Saint--Venant stability]
\label{thm:boundedSV}
For every $n\ge 2$ and $R\ge 2$ there is a computable constant $C(n,R)>0$ such that every
open $\Omega\subset B_R$ with $|\Omega|=\omega_n$ satisfies
\begin{equation}\label{eq:boundedSV}
   \mathcal{A}(\Omega)^2\le C(n,R) \delta_{\SV}(\Omega).
\end{equation}
\end{theorem}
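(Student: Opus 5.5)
The plan is to integrate the differential inequality \eqref{eq.estimate_dev_q_3terms} for the moving ball distance from an intermediate radius $\rho_0$ up to $\rho=1$, where $q(1)=\omega_n\mathcal A(\Omega)$. The key is to avoid dividing by $w$. First two reductions. Since $\mathcal A(\Omega)<2$, we may assume $\delta_{\SV}(\Omega)\le\delta_0(n,R)$ for a computable $\delta_0$ to be fixed; otherwise the estimate holds with $C=4/\delta_0$. Second, we may assume $\partial\Omega$ smooth, as required by Lemma~\ref{lemma.estimate_moving_ball_derivative}. For general $\Omega$, exhaust it by smooth open sets $\Omega_k\uparrow\Omega$ and rescale them to volume $\omega_n$. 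They lie in $B_{2R}$ for $k$ large, and $T(\Omega_k)\to T(\Omega)$ by monotone convergence of torsion functions. Also $\mathcal A(\Omega_k)\to\mathcal A(\Omega)$ by $L^1$ convergence of indicators. The constant $C(n,2R)$ then passes to the limit.

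Since $\rho^{-n}q$ is Lipschitz on $[\rho_0,1]$, for a.e.\ $\rho$ we have $(\rho^{-n}q)'\le\rho^{-n}(q'-\tfrac n\rho q)$. By \eqref{eq.estimate_dev_q_3terms}, this is at most $\rho^{-n}\bigl(\int|V_\rho-\overline V_\rho|+\int|H_{z_\rho,\rho}-\overline V_\rho|\bigr)$. Fix $\rho_*=1/2$ and split $[1/2,1]$ into a good set $G$ and a bad set. A radius is in $G$ if $w(\rho)\ge\rho/(2n)$ and $D_I(t(\rho))\le\eta_I(n,R,1/2)$. On $G$, Proposition~\ref{prop.estimates_q'_DH_DI}(1),(3) bound the right-hand side by $C\bigl(D_H^{1/2}+D_I^{1/2}\bigr)(t(\rho))$. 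Off $G$, we only use the Lipschitz bound $|(\rho^{-n}q)'|\le C(n)$ from Lemma~\ref{lemma.moving_ball_properties}. This gives
\[
\omega_n\mathcal A(\Omega)\le \rho_0^{-n}q(\rho_0)+C\,\bigl|[\rho_0,1]\setminus G\bigr|+C\int_G (D_H+D_I)^{1/2}(t(\rho))\dd\rho .
\]

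Each term is then bounded by $\delta_{\SV}^{1/2}$ or by $\delta_{\SV}$. The formula of Proposition~\ref{prop.various_estimates} has a nonnegative integrand. Hence the set where $w<\rho/(2n)$ has measure at most $C(n)\delta_{\SV}$. The proof of Proposition~\ref{prop.various_estimates} also gives $\delta_{\SV}=\int_0^1\frac{D_H+D_I}{n^2\omega_n\rho^{n-2}}w\dd\rho$. Where $w\ge\rho/(2n)$ and $\rho\ge1/2$, this yields $\int (D_H+D_I)\dd\rho\le C(n)\delta_{\SV}$. By Chebyshev, the radii with $w\ge\rho/(2n)$ but $D_I>\eta_I$ therefore have measure at most $C\delta_{\SV}/\eta_I$. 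Cauchy--Schwarz on $G$ gives $\int_G(D_H+D_I)^{1/2}\le C\delta_{\SV}^{1/2}$. For the starting radius, take $\delta_0$ small enough that the good part of $[1/2,3/4]$ has positive measure. By averaging, choose $\rho_0$ there with $D_I(t(\rho_0))\le C\delta_{\SV}$. Proposition~\ref{prop.estimates_q'_DH_DI}(2) then gives $q(\rho_0)\le C\delta_{\SV}^{1/2}$. Altogether, $\mathcal A(\Omega)\le C(\delta_{\SV}^{1/2}+\delta_{\SV})\le C\delta_{\SV}^{1/2}$, and squaring gives the claim. All constants and thresholds come from $\kappa_n$, $\eta_I$ and the constants of Proposition~\ref{prop.FJ-strong} by arithmetic, so they are computable.

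The main obstacle is the absence of a lower bound on $w$. It appears both in the weight of the deficit identity and implicitly in the definition of $V_\rho$. The plan handles this by discarding the small-measure set where $w$ is far below the ball profile, and paying only the Lipschitz cost of $q$ there. This works because that set has measure linear in $\delta_{\SV}$, not of order $\delta_{\SV}^{1/2}$. A minor technical point is that Lemma~\ref{lemma.estimate_moving_ball_derivative} and Proposition~\ref{prop.estimates_q'_DH_DI} hold only almost everywhere. So the integration must use absolute continuity of $\rho^{-n}q$, not the upper Dini derivative pointwise, and this is legitimate because $q$ is Lipschitz.
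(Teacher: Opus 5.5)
Your proposal is correct and follows essentially the paper's route. It uses the same good set $G$ (a lower bound on $w$ together with $D_I\le\eta_I$), the same application of Proposition~\ref{prop.estimates_q'_DH_DI} on $G$, the Lipschitz bound of $q$ on the complement (whose measure is linear in $\delta_{\SV}$), and the same smooth approximation. The differences are cosmetic: your integrating factor $\rho^{-n}$ absorbs the term $\frac{n}{\rho}q$ instead of bounding $q$ in $L^2$ through item~(2), and you start from a single averaged radius $\rho_0$ rather than averaging the fundamental theorem of calculus over $s\in[\rho_\star,1]$. For that averaging step, $\delta_0$ must make the good part of $[1/2,3/4]$ have measure at least a fixed constant such as $1/8$, not merely positive measure, so that the bound $D_I(t(\rho_0))\le C\delta_{\SV}$ is uniform.
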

\begin{proof}
First we consider the case that $\partial \Omega$ is smooth for simplicity. We start by fixing the following quantities
\[
  \rho_\star \eqdef 3/4,
  \quad
  \tau_0 \eqdef \frac{\rho_\star}{2n},
\]
and choose the computable threshold for the isoperimetric deficit $\eta_I=\eta_I(n,R,\rho_\star)>0$ from item~(3) of Proposition~\ref{prop.estimates_q'_DH_DI}.

We first make the endpoint $\rho=1$ precise.  The torsion function is
positive almost everywhere in $\Omega$, hence
$E_\rho\uparrow\Omega$ up to a null set as $\rho\uparrow 1$, it then holds that
\[
 q(1) \eqdef \inf_{z\in\mathbb{R}^n}|\Omega\Delta B_1(z)| = \omega_n\mathcal{A}(\Omega).
\]
Indeed, as $q$ is Lipschitz over $(0,1)$ from Lemma~\ref{lemma.moving_ball_properties}, it extends continuously to $[0,1]$, and since the minimizers $z_\rho$ in the infimum defining $q(\rho)$ are all contained in the same compact set, the identity above holds.

Since the derivative estimate~\eqref{eq.estimate_dev_q} is only an upper
bound for $q'$, we work with its positive part
$q'_+(\rho)\eqdef\max\{q'(\rho),0\}$.  Our main goal is to prove that
\begin{equation}\label{eq:bounded-proof-two-L2}
 \int_{\rho_\star}^1
 \left(q(\rho)^2+(q'_+(\rho))^2\right)\dd\rho
 \le C(n,R)\delta_{\SV}(\Omega),
\end{equation}
which will be done in the sequel. Once~\eqref{eq:bounded-proof-two-L2} is established, a simple argument through the fundamental theorem of calculus connects $\mathcal{A}(\Omega)^2$ and $\delta_{\rm SV}(\Omega)$.

Recall that applying the change of variables $t=t(\rho)$ in the exact identity from
Theorem~\ref{thm.SaintVenant_qualitative} gives
\[
 \delta_{\SV}(\Omega)=
 \int_0^1\frac{D_H(t(\rho))+D_I(t(\rho))}{n^2\omega_n\rho^{n-2}}\dd\mu(\rho).
\]
Since the defects are nonnegative and $\rho\in[\rho_\star,1]$,
\begin{equation}\label{eq:bounded-proof-budget}
 \int_{\rho_\star}^{1}
 \bigl(D_H(t(\rho))+D_I(t(\rho))\bigr)\dd\mu(\rho)
 \le C(n,\rho_\star)\delta_{\SV}(\Omega).
\end{equation}

We divide $[\rho_\star,1]$ into the good set
\[
 G\eqdef\{\rho\in[\rho_\star,1]:
 w(\rho)\ge\tau_0,\ D_I(t(\rho))\le\eta_I\}
\]
and its complement.  The latter has small measure:
\begin{equation}\label{eq:bounded-proof-exceptional}
 |[\rho_\star,1]\setminus G|
 \le C(n,\rho_\star,\eta_I)\delta_{\SV}(\Omega).
\end{equation}
Indeed, Proposition~\ref{prop.various_estimates} gives
\[
 0\le w(\rho)\le\frac{\rho}{n},
 \qquad
 \int_0^1\rho^n\left(\frac{\rho}{n}-w(\rho)\right)\dd\rho
 =\frac{1}{\omega_n}\delta_{\SV}(\Omega).
\]
On $\{w<\tau_0\}$, with $\tau_0=\rho_\star/(2n)$, the integrand is at
least $\rho_\star^{n+1}/(2n)$.  This controls the measure of the low-density
radii.  On the other exceptional set, where $w\ge\tau_0$ but
$D_I>\eta_I$, we use $\dd\mu=w(\rho)\dd\rho$ and estimate~\eqref{eq:bounded-proof-budget} to get
\[
 |\{w\ge\tau_0,\ D_I(t(\rho))>\eta_I\}|
 \le\frac{1}{\tau_0\eta_I}
 \int_{\rho_\star}^1D_I(t(\rho))\dd\mu(\rho)
 \le C\delta_{\SV}(\Omega),
\]
yielding~\eqref{eq:bounded-proof-exceptional}.

We now bound $q$ and $q'_+$ on $G$.   Item~(2) of
Proposition~\ref{prop.estimates_q'_DH_DI} gives
\[
 q(\rho)^2\le C(n)D_I(t(\rho)).
\]
At almost every differentiability point of $q$, we have the estimate~\eqref{eq.estimate_dev_q_3terms} for $q'$. And since now we are inside the good set, using items~(1) and~(3) of Proposition~\ref{prop.estimates_q'_DH_DI}, as
well as the preceding bound for $q$, we obtain
\begin{align*}
 q'(\rho)
 &\le\frac{n}{\rho} q(\rho)
 +\int_{\partial^*E_\rho}|V_\rho-\overline V_\rho|\dHn
 +\int_{\partial^*E_\rho}
       |\overline V_\rho-H_{z_\rho,\rho}|\dHn\\
 &\le C(n,R,\rho_\star)
 \left(D_H(t(\rho))^{1/2}+D_I(t(\rho))^{1/2}\right).
\end{align*}
Since the right-hand side is nonnegative, the same estimate holds for
$q'_+$. Consequently, for almost every $\rho\in G$,
\begin{equation}\label{eq:bounded-proof-good-radius}
 q(\rho)^2+(q'_+(\rho))^2
 \le C\bigl(D_H(t(\rho))+D_I(t(\rho))\bigr).
\end{equation}
But since $w\ge\tau_0$ on $G$, we have $\dd\rho\le\tau_0^{-1}\dd\mu$, so that equation~\eqref{eq:bounded-proof-budget} yields
\[
 \int_G\left(q(\rho)^2+(q'_+(\rho))^2\right)d\rho
 \le C\delta_{\SV}(\Omega).
\]

On the other hand, using the elementary estimates from Lemma~\ref{lemma.moving_ball_properties}
 \[
 q(\rho)\le2\omega_n,
 \qquad q'_+(\rho)\le2n\omega_n
 \quad\text{for almost every }\rho,
 \]
together with \eqref{eq:bounded-proof-exceptional}, these imply the following estimate on the exceptional set
 \[
 \int_{[\rho_\star,1]\setminus G}
 \left(q(\rho)^2+(q'_+(\rho))^2\right)d\rho
 \le C\delta_{\SV}(\Omega).
 \]
Adding the good and exceptional parts proves~\eqref{eq:bounded-proof-two-L2}.

It remains only to turn this averaged estimate into endpoint control. For every $s\in[\rho_\star,1]$, we apply the fundamental theorem of calculus, $q(1) =  q(s)+\int_s^1q'(\rho)\dd\rho$, and taking the average in $s$ over $[\rho_\star,1]$ so that
\begin{align*}
  q(1)
  &=
  \frac{1}{1-\rho_\star}\int_{\rho_\star}^1 q(s)\dd s
  +
  \frac{1}{1-\rho_\star}
  \int_{\rho_\star}^1\int_s^1 q'(\rho)\dd \rho\dd s\\
  &=
  \frac{1}{1-\rho_\star}\int_{\rho_\star}^1 q(s)\dd s
  +
  \frac{1}{1-\rho_\star}
  \int_{\rho_\star}^1
  (\rho-\rho_\star)q'(\rho)\dd\rho.
\end{align*}
Since $q'\le q'_+$ and
$0\le \frac{\rho-\rho_\star}{1-\rho_\star}\le 1$, it follows that
\[
q(1)
\le
\frac{1}{1-\rho_\star}\int_{\rho_\star}^1 q(s)\dd s
+
\int_{\rho_\star}^1 q'_+(\rho)\dd \rho.
\]
So applying the Cauchy--Schwarz inequality to both terms, along with~\eqref{eq:bounded-proof-two-L2} gives
\begin{align*}
q(1)
&\le
\frac{1}{\sqrt{1-\rho_\star}}
\left(\int_{\rho_\star}^1 q(s)^2\dd s\right)^{1/2}
+
\sqrt{1-\rho_\star}
\left(\int_{\rho_\star}^1 (q'_+(\rho))^2\dd\rho\right)^{1/2}\\
&\le
C\sqrt{\delta_{\mathrm{SV}}(\Omega)}
\end{align*}
Since $q(1)=\omega_n\mathcal{A}(\Omega)$, squaring both sides gives
$\mathcal{A}(\Omega)^2\le C(n,R)\delta_{\SV}(\Omega)$.

Now we consider a general open set $\Omega \subset B_R$. By smooth approximations there is a sequence ${(\Omega_k)}_{k \in \mathbb{N}}$ with smooth boundary such that
\[
    \Omega_k \subset \Omega_{k + 1} \subset \Omega, \quad
    v_k \eqdef |\Omega_k|  \xrightarrow[k \to \infty]{} \omega_n, \quad
    |\Omega \setminus \Omega_k|  \xrightarrow[k \to \infty]{} 0, \quad
    \text{and} \quad
    T(\Omega_k)  \xrightarrow[k \to \infty]{} T(\Omega).
\]
Since now $\Omega_k$ does not have volume $\omega_n$, we can rescale it as
\[
    \hat \Omega_k
    \eqdef
    {\left(\frac{\omega_n}{v_k}\right)}^{1/n} \Omega_k, \text{ so that }
    T(\hat \Omega_k)
    =
    {\left(\frac{\omega_n}{v_k}\right)}^{(n+2)/n} T(\Omega_k)
    \xrightarrow[k \to \infty]{}
    T(\Omega).
\]

For all sufficiently large $k$, the dilation factor is at most $2$, and hence
$\hat\Omega_k\subset B_{2R}$. Moreover,
$\mathbf1_{\hat\Omega_k}\to\mathbf1_\Omega$ in $L^1$, so
\[
 \mathcal A(\hat\Omega_k)\longrightarrow\mathcal A(\Omega),
 \qquad
 \delta_{\rm SV}(\hat\Omega_k)\longrightarrow\delta_{\rm SV}(\Omega),
\]
while convergence of the Fraenkel asymmetries follows directly from the $L^1$ convergence of indicator functions, convergence of the torsion can be easily proven with the dual formulation of $\Omega \mapsto T(\Omega)$ from~\eqref{eq.torsion_dual_formulation}. Applying the smooth result in $B_{2R}$ and passing to the limit
gives
\[
 \mathcal A(\Omega)^2\le C(n,2R)\delta_{\rm SV}(\Omega).
\]
Since the final constant is a result of finitely many computable operations, proves the theorem.
\end{proof}

\section{Removing boundedness assumptions}\label{sec.sharp_SV_unbounded}

In this section, we provide a truncation argument to reduce the stability question to sets inside a fixed ball. The argument we provide mirrors the one of Brasco--De Philippis--Velichkov~\cite[Lemma~5.3]{brasco2015faber}, with the exception that we use the global suboptimal stability estimate with exponent $3$ from~\ref{thm.SaintVenant_qualitative}. Albeit the similarity, we include the
argument for completeness and for the sake of verifying the computability of every constant.

\begin{lemma}\label{lemma.truncation}
There are computable dimensional constants $C_{\rm T}(n)>0$, $d(n)\ge2$,
and $\delta_*(n)\in(0,1]$ such that, if $|\Omega|=\omega_n$ and $\delta_{\rm SV}(\Omega)\le\delta_*(n)$, then there is an open set $\tilde\Omega$ with $|\tilde\Omega|=\omega_n$ and
\begin{equation}\label{eq.truncation}
 \diam(\tilde\Omega)\le d(n),\qquad
 \delta_{\rm SV}(\tilde\Omega)
 \le C_{\rm T}(n)\delta_{\rm SV}(\Omega),\qquad
 \mathcal A(\Omega)\le\mathcal A(\tilde\Omega)
 +C_{\rm T}(n)\delta_{\rm SV}(\Omega).
\end{equation}
\end{lemma}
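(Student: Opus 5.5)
I would follow the Brasco--De Philippis--Velichkov truncation scheme \cite[Lemma~5.3]{brasco2015faber}, using the torsion function $u=u_\Omega$ and the global suboptimal estimate of Theorem~\ref{thm:suboptimal_stabilitySV}. The first step uses the exponent-$3$ estimate. If $\delta_{\rm SV}(\Omega)\le\delta_*$, then $\mathcal A(\Omega)\le (C_3\delta_*)^{1/3}$ is small. Translating, $|\Omega\Delta B_1|\le C\delta_{\rm SV}^{1/3}$, so most of $\Omega$ lies in $B_1$ and $|\Omega\setminus B_2|\le C\delta_{\rm SV}^{1/3}$. The point is that we only need the \emph{mass} of the far part to be small quantitatively, with a computable threshold. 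The finer linear control of the final error comes from an energy argument, not from this exponent.

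The second step cuts $\Omega$ along a sphere. For $r\in[2,3]$ set $\Omega_r=\Omega\cap B_r$, and compare $T(\Omega)$ with $T(\Omega_r)$ plus the contribution of $\Omega\setminus B_r$. With $m(r)=|\Omega\setminus B_r|$ we have $-m'(r)=\mathcal H^{n-1}(\Omega\cap\partial B_r)$. I would use the competitor $v=u\,\varphi$ with $\varphi$ a cutoff, or simply $u$ restricted to one side after subtracting the boundary trace, in the dual formulation \eqref{eq.torsion_dual_formulation}. This yields an inequality of the form
\[
T(\Omega)\le T(\Omega_r)+T(\Omega\setminus \overline B_r)+C\int_{\Omega\cap\partial B_r}u\dHn .
\]
Using $\|u\|_\infty\le 1/(2n)$ from Lemma~\ref{lemma.preliminary_alpha_beta}, the boundary term is bounded by $C(-m'(r))$. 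I then rescale both pieces to volume $\omega_n$ and apply Saint--Venant to the outer piece. Its torsion is $\lesssim m(r)^{(n+2)/n}$, which is superlinearly small. Concavity-type scaling, $T(\lambda\Omega_r)=\lambda^{n+2}T(\Omega_r)$ with $\lambda^n=\omega_n/|\Omega_r|$, then gives
\[
\delta_{\rm SV}(\tilde\Omega_r)\le \delta_{\rm SV}(\Omega)-c\,m(r)+C(-m'(r))+C m(r)^{1+2/n}, \qquad \tilde\Omega_r=\lambda\Omega_r .
\]
The gain $-c\,m(r)$ comes from rescaling up a set whose torsion is near maximal: $T(B_1)(1+(n+2)m/(n\omega_n))$ exceeds $T(B_1)$ linearly, and this forces the deficit to pay for the mass.

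The third step, which I expect to be the main obstacle, is choosing the radius. If $-m'(r)\le \tfrac{c}{2C} m(r)$ for some $r\in[2,3]$ and $m(r)$ is small enough that the superlinear term is absorbed, I take that $r$. This gives $\delta_{\rm SV}(\tilde\Omega_r)\le\delta_{\rm SV}(\Omega)$, and also $m(r)\le C\delta_{\rm SV}(\Omega)$ from the same inequality, since the left side is nonnegative. Otherwise $m'\le -\gamma m$ on $[2,3]$, so $m(3)\le e^{-\gamma}m(2)$. This dichotomy alone does not yield the radius directly. As in BDV, I would iterate on a sequence of shells $[2+k,3+k]$, or use a differential inequality on the whole range $[2,\infty)$. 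Either the good radius is found at a controlled $r\le d(n)/2$, or $m$ decays exponentially, and then $m(r)=0$ for large $r$ is contradicted only via a De Giorgi-type ODE $-m'\ge c\,m^{(n-1)/n}$. That ODE comes from the relative isoperimetric inequality and forces extinction in finite computable distance. Every choice uses explicit constants, so $d(n)$ and $\delta_*$ are computable. Here I would need care that $\Omega_r$ stays connected enough to be the "main" piece. The disconnection along $\partial B_r$ is handled exactly by the trace term, so this causes no problem.

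Finally, $\tilde\Omega=\tilde\Omega_r\subset B_{\lambda r}$ has diameter at most $d(n)$ since $\lambda\le 2$. For the asymmetry, $|\Omega\Delta\tilde\Omega|\le m(r)+|\Omega_r|(\lambda^n-1)+|\lambda\Omega_r\Delta\Omega_r|$. The last term is not obviously linear in $m$, so I would instead compare with balls directly. Let $B_1(z)$ be optimal for $\tilde\Omega$. Then $B_{1/\lambda}(z/\lambda)$ is near-optimal for $\Omega_r$, and $|B_{1/\lambda}\Delta B_1|\le C(\lambda^n-1)\le Cm(r)$. This gives $\mathcal A(\Omega)\le\mathcal A(\tilde\Omega)+Cm(r)\le\mathcal A(\tilde\Omega)+C_{\rm T}\delta_{\rm SV}(\Omega)$.
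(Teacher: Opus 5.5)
\textbf{There is a genuine gap in Step~3.} Your Steps 1, 2 and 4 match the paper in spirit. The paper also gets the mass bound by applying Saint--Venant to the truncated set of smaller volume, and your asymmetry comparison is the paper's. But Step~3 does not close, and it cannot close with the ingredients you list.

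You bound the trace term only through $\|u\|_\infty\le 1/(2n)$, so the cost of cutting is $C(-m'(r))$ with an $O(1)$ constant. You therefore need some $r$ with $-m'(r)\le\gamma\, m(r)$ for a fixed small $\gamma$, and such a radius need not exist. Attach to a ball of volume $\omega_n-\varepsilon$ a thin radial tube of total volume $\varepsilon$, with cross-sectional area decaying like $e^{-\gamma' r}$ for some $\gamma'>\gamma$. Then $\delta_{\rm SV}(\Omega)\lesssim\varepsilon$, while $-m'=\gamma' m$ and $m>0$ for every $r\ge2$.

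Exponential decay contradicts nothing. Starting from the seed $m(2)\lesssim\delta_{\rm SV}^{1/3}$, it certifies $m(r)\lesssim\delta_{\rm SV}$ only at $r\sim\log(1/\delta_{\rm SV})$, which no $d(n)$ bounds. The fallback ODE $-m'\ge c\,m^{(n-1)/n}$ is not available either. $\Omega$ is an arbitrary open set with no perimeter bound, so the relative isoperimetric inequality gives no lower bound on $\mathcal H^{n-1}(\Omega\cap\partial B_r)$. In any case, extinction of $m$ would be the favourable outcome, not a contradiction.

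\textbf{The missing idea} is that the cost of the cut must be superlinear in the far mass, because $u$ itself is small where $\Omega$ has little mass. In the tube example, $u$ is of the order of the squared width. The paper imports the tail estimate \cite[Lemma~5.1]{brasco2015faber},
$\|u\|_{L^\infty(\Omega\setminus B_{R+1})}\le L_n|\Omega\setminus B_R|^{1/n}$.
It uses the width-one cutoff $\varphi_k u$ as a competitor, whose cost is at most
$\frac12|\Omega\setminus B_{k+1}|\bigl(\|u\|_{L^\infty(\Omega\setminus B_{k+1})}+\|u\|_{L^\infty(\Omega\setminus B_{k+1})}^2\bigr)\le A'_n b_k^{1+1/n}$.
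Combined with Saint--Venant for $\Omega\cap B_{k+2}$, this gives the recursion $b_{k+2}\le K_n\bigl(\delta_{\rm SV}(\Omega)+b_k^{1+1/n}\bigr)$. A computable, bounded number of iterations from $b_1\le (C_3\delta_{\rm SV})^{1/3}$ then reaches $b_{k_0+1}<2K_n\delta_{\rm SV}(\Omega)$, and this fixes $d(n)$.

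Your sphere-cutting scheme can be repaired the same way. With the tail estimate, the trace term is $\lesssim m(r-1)^{1/n}(-m'(r))$. Averaging over $r\in[k+1,k+2]$ then produces a radius where it is $\lesssim m(k)^{1+1/n}$.

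\textbf{A secondary issue in Step~2.} Your trace inequality does not follow from a cutoff competitor as you suggest. A cutoff of width $\varepsilon$ costs about $\|u\|_\infty^2\,\mathcal H^{n-1}(\Omega\cap\partial B_r)/\varepsilon$. You would need a Green-identity argument with barriers on both sides of $\partial B_r$, which the paper's width-one cutoff avoids entirely.
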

\begin{proof}
We follow the direct proof of \cite[Lemma~5.3]{brasco2015faber}, replacing the quartic stability from~\cite{FuscoMaggiPratelli2009Eigenvalue} by Theorem~\ref{thm:suboptimal_stabilitySV}. Recall the torsion functional's dual formulation given by
\[
 \mathcal{E}(\Omega)\eqdef\min_{v\in W_0^{1,2}(\Omega)}
 \left\{\frac{1}{2}\int_\Omega|\nabla v|^2\dd x-\int_\Omega v\dd x\right\}
 =-\frac{1}{2}T(\Omega).
\]
The Euler-Lagrange equations for the problem above imply that the torsion function $u = u_\Omega$ is optimal for $\mathcal{E}(\Omega)$, and the deficit can be rewritten as 
\[
  \frac{1}{2}\delta_{\rm SV}(\Omega) 
  = 
  \mathcal{E}(\Omega) - \mathcal{E}(B_1).
\]

If $\delta_{\rm SV}(\Omega)=0$, Theorem~\ref{thm:suboptimal_stabilitySV} gives
$\mathcal A(\Omega)=0$, and we take $\tilde\Omega=B_1$, hence we assume $\delta_{\rm SV}(\Omega)>0$. Up to translating the set $\Omega$, we can assume that an optimal asymmetry ball is given by $B_1$, so defining 
\[
 b_k\eqdef\frac{|\Omega\setminus B_k|}{\omega_n},\qquad k\ge1,
\]
the suboptimal stability for Saint-Venant from Theorem~\ref{thm:suboptimal_stabilitySV} gives
\begin{equation}\label{eq.truncation_seed}
 b_1\le\mathcal A(\Omega)
 \le \bigl(C_3(n)\delta_{\rm SV}(\Omega)\bigr)^{1/3}.
\end{equation}

Next, we use the following tail estimate from~\cite[Lemma~5.1]{brasco2015faber}: there exists a computable dimensional constant $L_n$ such that 
\begin{equation}\label{eq.torsion_tail}
 \|u\|_{L^\infty(\Omega\setminus B_{R+1})}
 \le L_n |\Omega \setminus B_R|^{1/n}. 
\end{equation}
In particular, taking $R = k$ this estimate implies that $\|u\|_{L^\infty(\Omega\setminus B_{k+1})} \le L_n b_k^{1/n}$. 

Let $\varphi_k(x)=\min\{1,(k+2-|x|)_+\}$ and use $\varphi_ku$ as a competitor for $\mathcal{E}(\Omega\cap B_{k+2})$. The optimality of $u$ in the definition of $\mathcal{E}(\Omega)$ gives
\begin{align*}
 \mathcal{E}(\Omega\cap B_{k+2})
 &\le\mathcal{E}(\Omega)
 +\frac{1}{2}|\Omega\setminus B_{k+1}|
 \left(\|u\|_{L^\infty(\Omega\setminus B_{k+1})}
 +\|u\|_{L^\infty(\Omega\setminus B_{k+1})}^2\right).
\end{align*}
 Since $b_k\le1$, \eqref{eq.torsion_tail} gives
\begin{equation}\label{eq.truncation_energy}
 \mathcal{E}(\Omega\cap B_{k+2})
 \le\mathcal{E}(\Omega)+A'_n b_k^{1+1/n},
\end{equation}
with computable $A'_n$. 

Now we want to apply the qualitative Saint-Venant inequality to the set $\Omega\cap B_{k+2}$, recalling the scaling for the torsion $T(t \Omega') = t^{n+2}T(\Omega')$, the minimality of the ball gives
\[
 \mathcal{E}(B_1)(1-b_{k+2})^{\frac{n+2}{n}}
 \le\mathcal{E}(\Omega\cap B_{k+2})
 \le\mathcal{E}(B_1)+\frac{1}{2}\delta_{\rm SV}(\Omega)+A'_n b_k^{1+1/n}.
\]
Since $\mathcal{E}(B_1)<0$ and $1-(1-b)^p\ge b$ for $b\in[0,1]$, this yields
\begin{equation}\label{eq.truncation_recursion}
 b_{k+2}\le K_n\bigl(\delta_{\rm SV}(\Omega)+b_k^{1+1/n}\bigr),
\end{equation}
where $K_n$ is computable from $A'_n$, $\omega_n$, and
$T(B_1)=\omega_n/[n(n+2)]$.

Put $q=1+1/(2n)$. Choose a positive rational $\delta_*(n)\le1$ so small that
\[
 \bigl(C_3(n)\delta_*(n)\bigr)^{1/3}<1,
 \qquad
 2K_n\bigl(C_3(n)\delta_*(n)\bigr)^{1/(6n)}\le1.
\]
As long as $b_{k+2}\ge2K_n\delta_{\rm SV}(\Omega)$, recursion
\eqref{eq.truncation_recursion} implies $b_{k+2}\le b_k^q$. Choose a
computable integer $m(n)$ such that $q^m>3$  and
  \[
 C_3(n)^{q^{m}/3}\delta_*(n)^{q^{m}/3-1}<2K_n.
\]
Iteration separately on the even and odd indices shows that there is
$k_0\le2m(n)+2$ such that
\begin{equation}\label{eq.truncation_tail_mass}
 b_{k_0+1}<2K_n\delta_{\rm SV}(\Omega).
\end{equation}
All these choices are finite searches over rational numbers and integers.

  Set $U=\Omega\cap B_{k_0+3}$ and
$r=(|U|/\omega_n)^{1/n}$. By reducing $\delta_*(n)$ once more,
\eqref{eq.truncation_tail_mass} gives $1/2\le r^n\le1$. Define
$\tilde\Omega=r^{-1}U$. Then $|\tilde\Omega|=\omega_n$ and
\[
 \diam(\tilde\Omega)\le2^{1+1/n}(2m(n)+5)\eqdef d(n).
\]
Equations \eqref{eq.truncation_energy} and
\eqref{eq.truncation_tail_mass}, followed by the scaling of $\mathcal{E}$,
give
\begin{align*}
 \delta_{\rm SV}(\tilde\Omega)
 &=2\left(r^{-(n+2)}\mathcal{E}(U)-\mathcal{E}(B_1)\right)\\
 &\le r^{-(n+2)}
 \left(\delta_{\rm SV}(\Omega)+2A'_n b_{k_0+1}^{1+1/n}\right)
 \le C(n)\delta_{\rm SV}(\Omega).
\end{align*}
Finally, let $B_1(z)$ be optimal for $\tilde\Omega$. Scaling this ball back
and using \eqref{eq.truncation_tail_mass},
\begin{align*}
 \omega_n\mathcal A(\Omega)
 &\le|\Omega\setminus U|+|U\Delta B_r(rz)|
   +|B_r(rz)\Delta B_1(rz)|\\
 &\le\omega_n\mathcal A(\tilde\Omega)+C(n)\delta_{\rm SV}(\Omega).
\end{align*}
Enlarging the computable constant $C(n)$ gives~\eqref{eq.truncation}.
\end{proof}

\begin{proof}[Proof of Theorem~\ref{thm.sharp_Saint_Venant_unbounded}]
Assume $|\Omega|=\omega_n$, and abbreviate
$\delta=\delta_{\rm SV}(\Omega)$, $C_{\rm T}=C_{\rm T}(n)$,
$d=d(n)$, and $C_{\rm b}=C(n,d)$, where $C(n,d)$ is the computable constant
 from Theorem~\ref{thm:boundedSV}.

If $\delta\ge\delta_*(n)$, then $\mathcal A(\Omega)^2\le4$ gives
\[
 \mathcal A(\Omega)^2\le\frac{4}{\delta_*(n)}\delta.
\]
  Suppose that $\delta<\delta_*(n)$. Lemma~\ref{lemma.truncation} gives, up to
a translation, a set $\tilde\Omega\subset B_d$ such that
\[
 \delta_{\rm SV}(\tilde\Omega)\le C_{\rm T}\delta,
 \qquad
 \mathcal A(\Omega)\le\mathcal A(\tilde\Omega)+C_{\rm T}\delta.
\]
  The bounded theorem therefore yields
\[
 \mathcal A(\Omega)
 \le\sqrt{C_{\rm b}C_{\rm T}\delta}+C_{\rm T}\delta.
\]
Since $\delta<\delta_*(n)$,
\[
 \mathcal A(\Omega)^2
 \le\bigl(2C_{\rm b}C_{\rm T}
 +2C_{\rm T}^2\delta_*(n)\bigr)\delta.
\]
Thus the theorem holds with the computable constant
\begin{equation}\label{eq.global_CSV}
 C_{\rm SV}(n)
 \eqdef\max\left\{
 \frac{4}{\delta_*(n)},
 2C(n,d(n))C_{\rm T}(n)+2C_{\rm T}(n)^2\delta_*(n)
 \right\}.
\end{equation}
This concludes the proof.
\end{proof}


\section{Final comments}\label{sec.comments}

\subsection{Other types of eigenvalues}\label{sec.generalized_eigenvalues}

As done in~\cite{brasco2015faber}, we can more generally define the first eigenvalue of the Laplacian in the $L^q$-topology, for all
\[
    1 \le q < 2^* 
    \eqdef 
    \begin{cases}
        \frac{2n}{n-2},& \text{ if } n \ge 3, \\ 
        +\infty,& \text{ otherwise,}
    \end{cases}
\]
as the following quantity 
\begin{equation}
     \lambda_{2,q}(\Omega)
      \eqdef
      \inf_{
        \substack{
            v\in W_0^{1,2}(\Omega),\\
            \norm{v}_{L^q(\Omega)} = 1
        } \, 
      }
      \displaystyle
      \int_\Omega |\nabla v|^2\dd x .
\end{equation}
Setting $\theta(q,n)=2/n+2/q-1>0$, the natural scaling of this new quantity is given by 
\[
 \lambda_{2,q}(t\Omega)=t^{-n\theta(q,n)}\lambda_{2,q}(\Omega).
\]
Thus the natural scale-invariant deficit is defined as
\begin{equation}\label{eq.deficit_2q}
 \delta_{2,q}(\Omega)
 \eqdef |\Omega|^{\theta(q,n)}\lambda_{2,q}(\Omega)
 -|B_1|^{\theta(q,n)}\lambda_{2,q}(B_1)\ge0.
\end{equation}
For $q=2$, this is precisely the Faber--Krahn deficit.

Thanks to the Dirichlet energy being kept the same, this quantity is also related to the same torsion functional $T(\Omega)$, thanks to the Kohler-Jobin type inequality proven by Brasco~\cite{brasco2014torsional}: 
\begin{equation}\label{eq.kohler_jobin_generalized}
      \lambda_{2,q}(\Omega)T(\Omega)^{\alpha(q,n)}
      \ge
      \lambda_{2,q}(B_1)T(B_1)^{\alpha(q,n)},
      \text{ where }
      \alpha(q,n) \eqdef 
      \frac{\frac{2}{n} + \frac{2}{q} - 1}{\frac{2}{n} + 1}. 
\end{equation}

Applying
i$1-(1+x)^{-1/\alpha}\le x/\alpha$ to
\eqref{eq.kohler_jobin_generalized}, exactly as in the proof of
Theorem~\ref{thm.sharp_FK}, and then scaling gives
\[
 \mathcal A(\Omega)^2\le C_{2,q}(n)\delta_{2,q}(\Omega),
 \qquad
 C_{2,q}(n)=\frac{C_{\rm SV}(n)T(B_1)}
 {\alpha(q,n)\lambda_{2,q}(B_1)|B_1|^{\theta(q,n)}}.
\]
This is the sharp estimate of Brasco--De Philippis--Velichkov~\cite{brasco2015faber}, but with a
computable constant, since $\lambda_{2,q}(B_1)$ is computable from its radial
Euler--Lagrange equation.

The sharp quantitative Faber--Krahn inequality for the first Dirichlet eigenvalue of the $p$-Laplacian was established by Fusco--Zhang~\cite{FuscoZhang2017}; see also Bhattacharya~\cite{Bhattacharya2001} for earlier asymmetry estimates. The transfer above, however, is specific to the quadratic Dirichlet energy and the linear torsion problem, and therefore does not immediately provide effective constants when $p\ne2$. We expect that the level-set techniques developed here may also be useful in that setting, which we plan to address in future work.

\subsection{Different proof strategies}\label{sec.alternative_routes}

While interacting with GPT 5.4 and 5.5 about this problem, several other proof ideas were explored, as alluded to previously. We record four routes that we explored in our path to obtain the proof above of the effective stability. We note that, currently, none of the proofs have been closed, and hence they have not yet produced an effective proof with the same generality as the argument above. The first is a quantitative version of the known selection proof, whereas the remaining three use parts of the level-set structure in different ways.

\subsubsection{An effective selection principle.}
One could try to make the proof of Brasco--De Philippis--Velichkov
\cite{brasco2015faber} effective. Starting from one set with small torsional
deficit, a penalized minimization problem should produce a quasi-minimizer
whose asymmetry and deficit-to-asymmetry ratio remain comparable to those of
the original set. The free-boundary theory of Alt--Caffarelli
\cite{AltCaffarelli1981} would then place the selected boundary in a graph
class. A quantitative bootstrap followed by the nearly spherical
second-variation estimate of~\cite{brasco2015faber}, in the spirit of
Fuglede~\cite{Fuglede1989}, would give the desired quadratic bound.

The delicate point is not the formal penalization, but rather that, in the known argument,
compactness is used to obtain a global graph over one sphere and the strong convergence needed for the perturbative expansion. A direct version would require quantitative graph-entry, patching, and Schauder estimates, all uniform for a single selected minimizer and stable under volume normalization. It does seem, to us, like this could be the most viable route to obtain an alternative proof of our main result, but since we were seeking a proof that uses all geometric information of the problem at once, we decided to pursue our proof strategy rather than that one. 

In contrast to the effective selection principle route, the one developed here has the added bonus of not having to rely too much on any form of regularity theory, being mostly elementary when confined to basic theorems in geometric measure theory. 

\subsubsection{A metric trace for the level-set foliation.}
A second possible route could retain the whole family $E_\rho$, but regard instead
$F_\rho=\rho^{-1}E_\rho$ as a curve of fixed-volume sets modulo
translations, with
\[
 d_{\mathcal X}([A],[C])
 \eqdef \inf_{y\in\mathbb R^n}|A\mathbin\Delta(C+y)|.
\]
The term $D_I$ should control the squared distance of $F_\rho$ from the unit
ball through strong isoperimetry and the homothetic part of its motion. The
term $D_H$ should control the remaining normal-velocity component. Together
they would bound the metric speed of $\rho\mapsto[F_\rho]$. An action
estimate followed by a trace inequality on an outer interval would then
yield a near-boundary level close to a ball, and the omitted boundary layer
could be added back at the end.

This formulation is close to the proof above, but asks for more. The deficit
controls the natural action with respect to the weighted measure
$\dd\mu=-t'(\rho)\dd\rho$, whereas transport to the endpoint requires
unweighted control across radii where $-t'(\rho)$ is small. The small measure
of these radii does not control the metric variation of a rough set-valued
curve. The present proof avoids this obstruction by estimating only the
positive derivative of the scalar moving-ball distance $q$ and by using its
global Lipschitz bound on the exceptional radii. 

\subsubsection{Selection followed by an inner Serrin level.}
An intermediate possibility is to use selection only to obtain uniform
$C^{1,\gamma}$ free-boundary estimates. For the torsion function of the
selected set, the level-set identity produces a regular level close to the
boundary on which $D_H$ and $D_I$ are small. Uniform H\"older control of
$\nabla u$ could upgrade the weighted $L^2$ oscillation measured by $D_H$ to
an $L^\infty$ approximate Serrin condition. A quantitative Serrin estimate,
for instance in the spirit of
\cite{BrandoliniNitschSalaniTrombetti2008}, would confine that level between
concentric spheres. The inherited $C^{1,\gamma}$ control could then promote
it to a small radial graph, where a direct perturbative estimate applies.

This route may avoid the $C^{2,\gamma}$ bootstrap required by the direct
selection proof, but it does not avoid selection or free-boundary
regularity. It also requires uniform constants in the interpolation from the
weighted boundary defect, in the Serrin estimate, and in the passage from an
annular $C^{1,\gamma}$ hypersurface to a radial graph. These points are
not to be overlooked, particularly in higher dimension and for disconnected or
multiply connected selected sets, for which reasons it ended up being abandoned. 

\subsubsection{A planar inset and radial-graph reduction.}
In dimension two, one can seek a selection-free proof from a single inner
level. After choosing a level at distance comparable to the square root of
the deficit, one keeps its largest connected component. The isoperimetric
defect should trap this inset in a thin annulus. Planar crossing identities
can then measure the rays on which the inset folds over itself. Cutting the
corresponding outer caps, rather than filling the intervening gaps, is
expected to produce a small-amplitude radial graph. Fourier or Steklov
coercivity for that graph would conclude the planar estimate.

Several intermediate steps remain, however, unresolved in that route. One must control holes meeting the
original boundary, realize the measurable first-crossing profile by an
admissible graph, and estimate the change of torsion under cap cutting.
Measure control alone is insufficient for the last step because a set of
zero area may have positive capacity. The translation mode must also be
removed without destroying radiality. These issues are specific enough to
make the planar route plausible, but they prevent it from being a proof at present, and no comparable all-dimensional construction is known.

\bibliographystyle{plain}
\bibliography{refs}

\end{document}